\documentclass[11pt]{article}

\usepackage[margin=1.2in]{geometry}

\usepackage{amsmath}
\usepackage{amsthm}
\usepackage{graphicx}
\usepackage{color}
\usepackage{amsfonts}
\usepackage{amssymb}
\usepackage{amscd}
\usepackage{url}



\newcommand{\bdes}{\begin{description}}
\newcommand{\edes}{\end{description}}

\newcommand{\bal}{\begin{align}}
\newcommand{\eal}{\end{align}}

\newcommand{\bnum}{\begin{enumerate}}
\newcommand{\enum}{\end{enumerate}}

\newcommand{\bit}{\begin{itemize}}
\newcommand{\eit}{\end{itemize}}

\newcommand{\bea}{\begin{eqnarray}}
\newcommand{\eea}{\end{eqnarray}}
\newcommand{\be}{\begin{equation}}
\newcommand{\ee}{\end{equation}}

\newcommand{\baray}{\begin{array}}
\newcommand{\earay}{\end{array}}

\newcommand{\bsry}{\begin{subarray}}
\newcommand{\esry}{\end{subarray}}

\newcommand{\bca}{\begin{cases}}
\newcommand{\eca}{\end{cases}}

\newcommand{\bcen}{\begin{center}}
\newcommand{\ecen}{\end{center}}

\newcommand{\bbm}{\begin{bmatrix}}
\newcommand{\ebm}{\end{bmatrix}}

\newcommand{\bmx}{\begin{matrix}}
\newcommand{\emx}{\end{matrix}}

\newcommand{\bpm}{\begin{pmatrix}}
\newcommand{\epm}{\end{pmatrix}}

\newcommand{\btab}{\begin{tabular}}
\newcommand{\etab}{\end{tabular}}

\theoremstyle{plain}

\newtheorem{theorem}{Theorem}[section]

\newtheorem{prop}[theorem]{Proposition}

\newtheorem{lemma}[theorem]{Lemma}
\newtheorem{cor}[theorem]{Corollary}
\newtheorem{corollary}[theorem]{Corollary}

\theoremstyle{definition}
\newtheorem{example}[theorem]{Example}

\newtheorem{remark}[theorem]{Remark}


\setcounter{equation}{0}
\setcounter{subsection}{0}

\renewcommand{\subsection}[1]{
    \stepcounter{subsection}
    \settowidth{\hangindent}{\bf\thesubsection.~}
    \hangafter=1
    \bigskip\bigskip\noindent
    {\bf\hbox{\thesubsection.~}#1}\par
    \nobreak
    \medskip
}

\begin{document}

\title{Recursively determined representing measures \\ 
for bivariate truncated moment sequences}
\author{Ra\' ul E. Curto
\footnote{Research was partially supported by NSF Grant DMS-0801168.} \\
and \\
Lawrence Fialkow
\footnote{Research was partially supported by NSF Grant DMS-0758378.}}

\maketitle

\begin{abstract}

A theorem of Bayer and Teichmann [BT] implies that if a finite
real multisequence $\beta\equiv \beta^{(2d)}$ has a representing
measure, then the associated moment matrix $M_{d}$ admits
positive, recursively generated moment matrix extensions $M_{d+1},~M_{d+2},\ldots$. \
For a bivariate recursively determinate $M_{d}$, we show that the existence of
positive, recursively generated extensions $M_{d+1},\ldots,M_{2d-1}$ is sufficient
for a measure. \ Examples illustrate that all of these extensions may be required to 
show that $\beta$ has a measure. \ We describe in detail a constructive procedure for
determining whether such extensions exist. \ Under mild additional
hypotheses, we show that $M_{d}$ admits an extension $M_{d+1}$
which has many of the properties of a positive,
recursively generated extension.

\end{abstract}

\bigskip
\noindent
{\bf Keywords:} truncated moment sequence,
 moment matrix, representing measure

\bigskip
\noindent
{\bf AMS subject classifications:}  47A57, 44A60, 47A20


\section{Introduction}\label{Intro}
\setcounter{equation}{0}
Let $\beta\equiv \beta^{(2d)}:= \{\beta_{ij}\}_{i,j\ge 0, i+j\le 2d}$
denote a real bivariate moment sequence of degree $2d$. \ The Truncated
Moment Problem seeks conditions on $\beta$ for the existence of a
positive Borel measure $\mu$ on $\mathbb{R}^{2}$ such that
\begin{equation} \label{beta}
\beta_{ij} = \int_{\mathbb{R}^{2}} x^{i}y^{j}d\mu \quad(i,j\ge 0,~~i+j\le 2d).
\end{equation}
A result of \cite{tcmp10} shows that $\beta$ admits a finitely atomic {\it{representing measure}} $\mu$
(as in (\ref{beta})) if and only if $M_{d}\equiv M_{d}(\beta)$, the {\it{moment matrix}}
associated with $\beta$, admits a {\it flat extension} $M_{d+k+1}$, i.e., an extension to a positive semidefinite moment
matrix $M_{d+k+1}$ such that $\text{rank}~M_{d+k+1} = 
\text{rank}~M_{d+k}$. \ The extension of this result to general representing measures follows from a theorem
of C. Bayer and J. Teichmann \cite{BT}, which implies that if $\beta$ has a representing measure, then it has a finitely atomic 
representing measure (cf. \cite[Section 2]{finitevariety}, \cite[Section 1]{tcmp11}). \ At present, for a general moment matrix, there
is no known concrete test for the existence of a flat extension $M_{d+k+1}$.
In this note, for the class of bivariate {\it{recursively determinate}}
moment matrices, we present a detailed analysis of an algorithm of \cite{finitevariety}
that can be used in numerical examples to determine the existence or 
nonexistence of flat extensions (and representing measures). \ This algorithm determines the existence or 
nonexistence of positive, recursively generated extensions $M_{d+1},\ldots,M_{2d-1}$, at least one of which
must be a flat extension in the case when there is a measure. \ Theorem \ref{gridthm} shows that there are
sequences $\beta^{(2d)}$ for which the first flat extension occurs at $M_{2d-1}$, so all of the above extensions must be
computed in order to recognize that there is a measure. \ This result stands in sharp contrast to traditional truncated moment
theorems (concerning representing measures supported in $\mathbb{R}$, $[a,b]$, $[0,+\infty)$, or in a planar curve of degree $2$), 
which express the existence of a measure in terms of tests closely related to the original moment data (cf.
Remark \ref{newrmk} below and \cite{Houston}, \cite{tcmp1}, \cite{tcmp3}, \cite{tcmp11}, \cite{finitevariety}). \ Here we see that, at least within 
the framework of moment matrix extensions, we may need to go far from the original data to resolve the existence of a 
measure. \ In Theorems \ref{rdext} and \ref{RDnew} we show that 
under mild additional hypotheses on $M_d$, the implementation of each extension step, from $M_{d+j}$ to $M_{d+j+1}$,
leading to a flat extension $M_{d+k+1}$, consists of simply verifying a matrix positivity condition.

Let $\mathcal{P}_{d}\equiv \mathbb{R}[x,y]_{d}$ denote the bivariate real
polynomials of degree at most $d$. \ For $p\in \mathcal{P}_{d}$, $p(x,y) \equiv
\displaystyle \sum \limits_{i,j\ge 0, i+j\le d} a_{ij}x^{i}y^{j}$, let $\hat{p}:=(a_{ij})$ denote
the vector of coefficients with respect to the basis for $\mathcal{P}_{d}$
consisting of monomials in degree-lexicographic order, i.e.,
$1,x,y,x^{2},xy,y^{2},\ldots,x^{d},\newline
\ldots,y^{d}$. \ Let 
$L_{\beta}:\mathcal{P}_{2d}\longrightarrow \mathbb{R}$ denote the 
Riesz functional, defined by $L_{\beta}(\displaystyle \sum \limits_{i,j\ge 0,i+j\le 2d} a_{ij}x^{i}y^{j})
:=\sum a_{ij} \beta_{ij}$. \ The moment matrix $M_{d}$, whose rows and
columns are indexed by the monomials in $\mathcal{P}_{d}$, is defined by
$\langle M_{d}\hat{p},\hat{q} \rangle := L_{\beta}(pq)$ ($p,q\in \mathcal{P}_{d}$).
We denote the successive rows and columns of $M_{d}$ by $1,~X,~Y,\ldots,
X^{d},\ldots,Y^{d}$; thus, the entry in row $X^iY^j$, column $X^kY^{\ell}$, which we denote by
$\langle X^{k}Y^{\ell},X^{i}Y^{j} \rangle$, is equal to $\beta_{i+k,j+\ell}$. \ We may denote a linear combination of
 rows or columns  by $p(X,Y):= \sum a_{ij}X^{i}Y^{j}$ for some
$p \equiv \sum a_{ij}x^{i}y^{j}
\in \mathcal{P}_{d}$; note that $p(X,Y) = M_{d}\hat{p}$. \ We say that $M_{d}$ is {\it{recursively
 generated}} if $ker~M_{d}$ has the following ideal-like property:
 \begin{equation} \label{recgen}
p,q,pq\in \mathcal{P}_{d},~p(X,Y)=0 \Longrightarrow (pq)(X,Y)=0.
\end{equation}

If $\beta$ has a representing measure, then $M_{d}$ is positive semidefinite
and recursively generated \cite{tcmp10} (and in one variable these conditions are 
sufficient for the existence of a representing measure \cite{Houston}). \ Moreover, from \cite{BT}, $\beta$ actually admits 
a {\it finitely atomic} representing measure $\mu$, which therefore has finite moments of all orders; it follows that $M_{d}$ admits positive,
recursively generated moment matrix extensions of all orders, namely $M_{d+1}[\mu],\ldots,M_{d+k}[\mu],\ldots$. \ Let us consider a moment matrix
extension 
$$M_{d+1}\equiv 
\bpm
M_{d} & B(d+1) \\
B(d+1)^{T} & C(d+1)   
\epm,$$
where the block $B(d+1)$ includes new moments of degree $2d+1$ (as well as old moments of degrees $d+1,\ldots,2d$), 
and block $C(d+1)$ consists of new moments of degree $2d+2$. \ We denote the columns of $B(d+1)$ by $X^{d+1},\ldots,Y^{d+1}$, and we say that 
$(M_d \; B(d+1))$ is {\it recursively generated} if (\ref{recgen}) holds in its column space, but with $p,q,pq\in \mathcal{P}_{d+1}$. \
 $M_{d+1}$ is positive semidefinite if and only if (i) $M_{d}$ is positive semidefinite;
(ii) $Ran~B(d+1) \subseteq Ran~M_{d}$ (equivalently,
$B(d+1) = M_{d}W$ for some matrix $W$);
(iii) $ C(d+1) \succeq C^{\flat}:= W^{T}M_{d}W$ (cf. \cite{tcmp1}). \ (Here and in the sequel, for a real symmetric 
matrix $A$, we will write $A \succeq 0$ (resp. $A \succ 0$) to denote that $A$ is positive semidefinite (resp. positive semidefinite 
and invertible).) \ If $M_{d+1} \succeq 0$, then we also have (iv) 
each dependence relation in $Col~M_{d}$ (the column space of $M_{d}$)
extends to $Col~M_{d+1}$. \ In the sequel we say that $M_{d+1}$ is an
$RG$ {\it extension} if properties (i), (ii), and (iv) hold and $M_{d+1}$ is
recursively generated (so, in particular, $(M_d \; B(d+1))$ is recursively generated). \ In the 
sequel, we provide sufficient conditions for $RG$ extensions; note that to verify that
an $RG$ extension is positive semidefinite and recursively
generated, it is only necessary to verify condition (iii). 

For a general $M_{d}$, a significant difficulty in determining the
existence of a flat extension $M_{d+k+1}$ is that there may be infinitely
many positive and recursively generated extensions $M_{d+1}$. \ If one such
extension does not admit a subsequent flat extension, this does not preclude
the possibility that some other extension does. \ In the sequel, we focus
on the class of recursively determinate moment matrices {\it (RD)} introduced in \cite{finitevariety} (cf. \cite{F08}). \ These are
characterized by the property that there can be at most one positive, recursively 
generated extension, and there is a concrete
procedure (described below) for determining the existence or nonexistence of this extension.
 Since such an extension is also recursively determinate, 
we may proceed iteratively to determine the existence or nonexistence
of positive and recursively 
generated
extensions 
\begin{equation} \label{sequence}
M_{d+1},\ldots,M_{2d-1}.
\end{equation}
As we discuss below, the existence of the extensions in (\ref{sequence}) is equivalent to the existence of a flat extension
$M_{d+k+1}$ and, in fact, one of the extensions in (\ref{sequence}) is a flat extension of $M_d$. \ (If $M_{j+1}$ is positive semidefinite, then $M_{j}$ is positive semidefinite 
and recursively generated \cite{tcmp10}, so, using also \cite{BT}, it follows that (\ref{sequence}) is equivalent 
to the existence of a positive semidefinite extension $M_{2d}$.)

A bivariate moment matrix $M_{d}$ admits a block decomposition 
$M_{d} = (B[i,j])_{0\le i,j \le d}$, where  
$$B[i,j] = \bpm
\beta_{i+j,0} & \cdots &  \beta_{i,j} \\
\vdots & \vdots & \vdots \\
\beta_{j,i} & \cdots & \beta_{0,i+j}   
\epm.$$
Thus, $B[i,j]$ is constant on each cross-diagonal; we refer to this as the {\it Hankel property}. \ 
Note that in the extension $M_{d+1}$, $B(d+1) = (B[i,d+1])_{0\le i \le d}$,
and all of the new moments of degree $2d+1$ appear within block
$B[d,d+1]$, either in column $X^{d+1}$ (the leftmost column) or in
column $Y^{d+1}$ (on the right). \ Similarly, all new moments of degree $2d+2$
appear in column $X^{d+1}$ or column $Y^{d+1}$ of $C(d+1)$ ($=B[d+1,d+1]$).
In the sequel, by a {\it{column dependence relation}} we mean a linear dependence
relation of the form $X^{i}Y^{j}=r(X,Y)$, where $deg~r\le i+j$ and each monomial
term in $r$ strictly precedes $x^{i}y^{j}$ in the degree-lexicographic order;
we say that such a relation is {\it{degree reducing}} if $deg~r<i+j$.
A bivariate moment matrix $M_{d}$ is {\it {recursively determinate}} if there are
column dependence relations of the form 
\begin{equation}\label{xrelation}
X^{n} = p(X,Y) \quad (p\in \mathcal{P}_{n-1}, ~n\le d)
\end{equation}
 and
\begin{equation}\label{yrelation}
 Y^{m} = q(X,Y) \quad (q\in \mathcal{P}_{m}, ~q ~\text{has no} ~y^m ~\text{term}, ~ m\le d),
\end{equation}
or with similar relations with the roles of $p$ and $q$ reversed. \ In the sequel, we state the
main results (Theorems \ref{rdext} and \ref{gridthm}) with $p$ and $q$ as in (\ref{xrelation})-(\ref{yrelation}), 
but these results are valid as well with the roles of p and q reversed. \ In Section \ref{Sect2} we show
that if $M_{d}$ is recursively determinate, then the only possible positive, recursively generated (or merely $RG$) extension 
is completely  determined by 
column relations $X^{d+1} = (x^{d+1-n}p)(X,Y)$
and $Y^{d+1}=(y^{d+1-m}q)(X,Y)$. 

The most important case of recursive determinacy occurs when $M_{d}$ is
positive and {\it{flat}}, i.e., $\text{rank}~M_{d} = \text{rank}~M_{d-1}$ (equivalently,
each column of degree $d$ can be expressed as a linear combination of
columns of strictly lower degree). \ A fundamental result of \cite{tcmp1} shows that
in this case $M_{d}$ admits a unique flat extension $M_{d+1}$ (and a
corresponding $\text{rank}~M_{d}$-atomic representing measure). \ In this paper,
we stay within the framework of recursive determinacy, but relax the
flatness condition, and study the extent to which positive, recursively generated
extensions exist. \ 

Our main results are Theorems \ref{rdext} and \ref{RDnew}, which give sufficient conditions for RG extensions, 
and Theorem \ref{gridthm}, which shows that the number of extension steps leading to a flat extension is sometimes proportional 
to the degree of the moment problem. \ Theorem \ref{rdext} shows that if $M_{d}$
is positive and recursively generated, and if all column dependence relations
arise from (\ref{xrelation}) or (\ref{yrelation}) via recursiveness and linearity,
then $M_{d}$ admits a
unique $RG$ extension. \ In general, this extension need not be positive semidefinite
(see the discussion preceding Example \ref{posexample}), but if $d = n+m-2$, then this extension is actually a flat extension, so
$\beta$ admits a representing measure (Corollary \ref{Cblock}). \
Additionally, we show in Theorem \ref{RDnew} that if $M_{d}$ is positive semidefinite,
recursively generated, and recursively determinate, and if all column
dependence relations are degree-reducing, then
$M_{d}$ again admits a unique $RG$ extension. \ However, we show
in Example \ref{notRDexample} that if all of the column relations are degree-reducing
except that $deg~q=m$, then $M_{d}$ need not even admit a block
$B(d+1)$ consistent with recursiveness for $(M_d \; B(d+1))$. \ In Theorem \ref{gridthm} we 
show that for each $d$, there exists  $\beta \equiv \beta^{(2d)}$, with $M_{d}(\beta) \in RD$,
such that in the sequence of positive, recursively generated extensions, $M_{d+1},\ldots,M_{2d-1}$,
the first flat extension is $M_{2d-1}$, so the determination
that a measure exists takes the maximum possible number of extension steps. \ Moreover, at each extension step, $M_{d+i}$ satisfies
the hypotheses of Theorem \ref{rdext}, so it is guaranteed in advance that the next extension $M_{d+i+1}$ 
is well-defined and recursively generated; only its positivity needs to be verified. \
In general, however, the existence of a positive, recursively generated extension
$M_{d+1}$ does not imply the existence of a measure. \ 
In Section \ref{Sect3} we answer \cite[Question 4.19]{finitevariety} by showing that if, 
under the hypotheses of Theorem \ref{rdext}, $M_{d}$ does admit a positive,
recursively generated extension $M_{d+1}$, then $M_{d+1}$ may also satisfy the conditions of Theorem \ref{rdext}, but need
not admit a positive, recursively generated extension $M_{d+2}$, and
thus $M_{d}$ may fail to have a measure.
 
We conclude this section by reviewing and illustrating \cite[Algorithm 4.10]{finitevariety}
concerning extensions of recursively determinate bivariate moment matrices.  \  
We may assume that $M_{d}$ is positive and recursively generated, for otherwise
there is no representing measure. \ (We note that in numerical problems, positivity and
recursiveness can easily be verified using elementary linear algebra.) \
To define block $B(d+1)$ for an extension $M_{d+1}$, note that blocks
$B[0,d+1],\ldots,B[d-1,d+1]$ consist of old moments from $M_{d}$.
To define moments of degree $2d+1$ for block $B[d,d+1]$,
we first use  (\ref{xrelation}) and recursiveness
to define the ``left band" of columns,  $X^{n+i}Y^{d+1-i-n} := (x^{i}y^{d+1-i-n}p)(X,Y)$
($0\le i \le d+1-i$). \ In block $B[d,d+1]$, certain ``new moments"
 in column $X^{n}Y^{d+1-n}$ can be moved up and to the right along cross-diagonals until they reach row $X^{d}$ (the top row of $B[d,d+1]$)
in columns of the ``central band," $X^{n-1}Y^{d+2-n},\ldots,X^{d+2-m}Y^{m-1}$. \ These values
can then be used to define $\langle X^{d+1-m}Y^{m}, X^{d} \rangle$
(the entry in row $X^{d}$, column  $X^{d+1-m}Y^{m}$) by means of
\begin{equation}\label{rightrec}
\langle X^{d+1-m}Y^{m}, X^{d}\rangle := \langle (x^{d+1-m}q)(X,Y), X^{d}\rangle.
\end{equation}
This value may be moved one position down and to the left along its
cross-diagonal and then used to define
$\langle X^{d+1-m}Y^{m}, X^{d-1}Y \rangle
:= \langle (x^{d+1-m}q)(X,Y), X^{d-1}Y\rangle.$
           We repeat this process
successively to complete the definition of column $X^{d+1-m}Y^{m}$ in $B[d,d+1]$ as well as
 the definition of the central band of columns in this block. \
We next complete the definition of $B[d,d+1]$ by successively defining the 
``right band" of columns,
$X^{d-m}Y^{m+1},\ldots,Y^{d+1}$, using
$$X^{d+1-m-i}Y^{m+i}:=(x^{d+1-m-i}y^{i}q)(X,Y)~~~(0\le i\le d+1-m).$$

It is necessary to check that the values in the central and right bands, as just defined,
are compatible with values in the left band, and, more generally, to verify that
$B(d+1)$ is a well-defined moment matrix block. \ If this fails to be the case,
there is no measure. \ If $B(d+1)$ is well-defined, we next check that
$Ran~B(d+1)\subseteq Ran~M_{d}$, for if this is not the case, then there is no
measure. \ Assuming the range condition is satisfied, (\ref{xrelation}) and
(\ref{yrelation}) will hold in the columns of $B(d+1)^{T}$ (the transpose).
We then apply recursiveness and the method used just above in defining $B[d,d+1]$
to attempt to define $C(d+1)\equiv B[d+1,d+1]$. \ Assuming that $C(d+1)$
is well-defined, we further check that $M_{d+1}$ is positive and recursively
generated. \ If any of the preceding steps fails, there is no representing measure.
Our main results (Theorems \ref{rdext} and \ref{RDnew}) show that if all column relations come from
(\ref{xrelation}) or (\ref{yrelation}) via recursiveness and linearity, or if (\ref{xrelation}) - (\ref{yrelation}) hold and all column dependence
relations are degree-reducing, then all of the preceding steps are
guaranteed to succeed, except possibly the positivity of $M_{d+1}$;
thus $M_{d+1}$ is at least an $RG$ extension.

     If $M_{d+1}$, as just defined, is positive and recursively generated, then,
since it is also recursively determinate, we may apply the above procedure 
successively,
in attempting
 to define  positive and recursively generated extensions $M_{d+2}$,
$M_{d+3},\ldots$.
Note that the  central band of degree $d$ in $M_{d}$ has $n+m-d-1$ columns, $X^{n-1}Y^{d-n+1},\ldots,X^{d+1-m}Y^{m-1}$. \
 In each successive extension $M_{d+k}$, the
number of columns  in the central band  of degree $d+k$ is $n+m-d-1-k$.
 Thus, after at most $n+m-d-1$ extension steps, either the extension
process fails, and there is no measure, or the central band disappears and there is 
a flat extension, 
at or before $M_{n+m-1}$, and a measure. \ 
(Note that since $n,m\le d$, this refines our earlier assertion that a flat extension
occurs at or before $M_{2d-1}$.) Another  estimate for the number of
extension steps is based on the {\it{variety}} of $M_{d}$, defined as
$\mathcal{V} \equiv \mathcal{V}(M_{d}) := \displaystyle \bigcap \limits_{r\in \mathcal{P}_{d},r(X,Y)=0}
\mathcal{Z}_{r}$, where $\mathcal{Z}_{r}$ is the set of real zeros of $r$.
It follows from \cite{finitevariety} that the number of extension steps leading to a flat extension
is at most $1+ \text{card}~\mathcal{V} - \text{rank}~M_{d}$. \ Note also that when a measure exists, it is supported inside 
$\mathcal{V}$ \cite{tcmp10}, so its support is a subset of the finite real variety determined by $x^{n}-p(x,y)$ and $y^{m}-q(x,y)$.

Examples  are known where the $RG$ extension $M_{d+1}$ is not positive semidefinite
(cf. \cite[Example 4.18]{finitevariety}, [CFM, Theorem 5.2], both with $n=m=d=3$, and the example of Section \ref{Sect3} 
(below), with $d=5$, $n=m=4$). \ We next present an example, adapted from \cite[Example 5.2]{F08}, which illustrates
the algorithm in a case leading to a measure.

\begin{example}\label{posexample}
Let $d=3$ and consider
$$M_{3}=
  \left( \begin{array}{cccccccccc}
  1 & 0 & 0 & 1 & 2 & 5 & 0 & 0 & 0 & 0 \\
  0 & 1 & 2 & 0 & 0 & 0 & 2 & 5 & 14 & 42      \\
  0 & 2 & 5 & 0 & 0 & 0  & 5 & 14 & 42 & 132 \\
  1 & 0 & 0 & 2 & 5 & 14 & 0  & 0 & 0 & 0 \\
  2 & 0 & 0 & 5 & 14 & 42 & 0 & 0 & 0 & 0 \\
  5 & 0 & 0 & 14 & 42 & 132 & 0 & 0 & 0 & 0 \\
  0 & 2 & 5 & 0 & 0 & 0  & 5 & 14 & 42 & 132 \\
  0 & 5 & 14 & 0 & 0 & 0 & 14 & 42 & 132 & 429  \\
  0 & 14 & 42 & 0 & 0 & 0 & 42 & 132 & 429 & c \\
  0 & 42 & 132 & 0 & 0 & 0 & 132 & 429 & c & d
    \end{array}  \right).
$$
We have $M_{3}\succeq 0$, $M_{2}\succ 0$, and $\text{rank}~M_{3} = 8 \Longleftrightarrow d=2026881 - 2844 c + c^2$. \ 
When $\text{rank}~M_{3}=8$, then the two
column relations are
$$ Y = X^{3}$$
and
$$ Y^{3} = q(X,Y),$$
where $q(x,y):=(5715 - 4 c) x+ 10 (-1428 + c) y - 3 (-2853 + 2 c) x^2 y + (-1422 + c) x y^2$. \
Let $r_{1}(x,y) = y - x^{3}$ and $r_{2}(x,y) =
y^{3}-q(x,y)$. \ With these two column relations in hand, Theorem \ref{rdext} guarantees the existence of a unique 
$RG$ extension $M_{4}$. \ To test the positivity of $M_{4}$, we calculate the determinant of the $9 \times 9$ matrix consisting of the rows and
columns of $M_{4}$ indexed by the monomials $1,x,y,x^2,xy,y^2,x^2y,xy^2,x^2y^2$. \ A straightforward calculation using {\it Mathematica} shows 
that three cases arise:

(i) $c<1429$: here $M_{4} \not \succeq 0$, so $M_{3}$ admits no representing measure;

(ii) $c=1429$: here $M_{4}$ is a flat extension of $M_{3}$, so by the main result in \cite {tcmp1}, $M_{3}$ admits an $8$-atomic 
representing measure;

(iii) $c>1429$: here $M_{4}$ is a positive RG extension of $M_{3}$ with rank $9$. \ Although $M_{4}$ is not a flat extension of $M_{3}$, it nevertheless satisfies the hypotheses of Theorem \ref{rdext}, so Corollary \ref{flatext} implies that $M_{4}$ admits a flat extension $M_{5}$, and therefore $M_{3}$ has a $9$-atomic representing measure. \ Moreover, since the original algebraic variety $\mathcal{V} \equiv \mathcal{V} (M_{3})$ associated with $M_{3}$, $\mathcal{Z}_{r_{1}}\bigcap \mathcal{Z}_{r_{2}}$, can have at most $9$ points (by B\'ezout's Theorem), it follows that $\mathcal{V}=\mathcal{V}(M_{5})$.  This algebraic variety must have exactly $9$ points, and thus constitutes the support of the unique representing measure for $M_{3}$. \ 

To illustrate this case, we take the special value $c=1430$, so that $q(x,y) \equiv -5x+20y-21x^2y+8xy^2$. 
\ Let $\alpha:=\frac{1}{2}\sqrt{5-2\sqrt{5}}$ and $\gamma:=\sqrt{5}\alpha$. \ A calculation shows that
$\mathcal{V} =  \{(x_{i},x_{i}^{3})\}_{i=1}^{9}$,
where $x_{1} = 0$, $x_{2} =\frac{1}{2}(-1-\sqrt{5}) \approx -1.618$, $x_{3} =\frac{1}{2}(1-\sqrt{5}) \approx -0.618$, $x_{4} 
=-x_{3} \approx 0.618$, 
$x_{5} =-x_{2} \approx 1.618$, 
$x_{6} =-\alpha-\gamma \approx -1.176$,
$x_{7} =-\alpha+\gamma \approx 0.449$,
$x_{8} =-x_{7} \approx -0.449$ and
$x_{9} =-x_{6} \approx 1.176$. \ $M_{3}$ satisfies the hypothesis of Theorem \ref{rdext} with $n=m=3$,
so we proceed to generate the $RG$ extension
$M_{4}$. \ This extension is uniquely determined by imposing the
column relations $X^{4} = XY$, $X^{3}Y = Y^{2}$,
$XY^{3} = (xq)(X,Y)$, and $Y^{4}= (yq)(X,Y)$
(first in 
 $ \left( \begin{array}{cc}
  M_{3} & B(4) \\
    \end{array}  \right)  $, then in
 $ \left( \begin{array}{cc}
  B(4)^{T} & C(4) 
    \end{array}  \right)  $). \
 A calculation shows
that, as expected, these relations unambiguously define a positive moment matrix
$M_{4}$ with $\text{rank}~M_{4} = 9$ ($>8=\text{rank}~M_{3}$). \ It follows that $M_{3}$ admits no
flat extension $M_{4}$, so we proceed  to construct the $RG$
 extension $M_{5}$, uniquely determined by imposing the relations
$X^{5} = X^{2}Y$, $X^{4}Y = XY^{2}$, $X^{3}Y^{2} = Y^{3}$,
$X^{2}Y^{3} = (x^{2}q)(X,Y)$, $XY^{4}= (xyq)(X,Y)$,
$Y^{5} = (y^{2}q)(X,Y)$. \ A calculation of these columns
(first in 
 $ \left( \begin{array}{cc}
  M_{4} & B(5) \\
    \end{array}  \right)  $, then in
 $ \left( \begin{array}{cc}
  B(5)^{T} & C(5) 
    \end{array}  \right)  $),
shows that, as again expected, they do fit together to unambiguously define
a moment matrix $M_{5}$.
From the form of $q(x,y)$, we see that $M_{5}$ is actually a
flat extension of $M_{4}$, in keeping with the above discussion. \ Corresponding to this flat extension is the unique, 
$9$-atomic, representing
measure $\mu\equiv \mu_{M_{5}}$ as described in \cite{tcmp10}. \
Clearly, $\text{supp}~\mu = \mathcal{V}$, so $\mu$ is of the form
$\mu = \sum_{i=1}^{9} \rho_{i}\delta_{(x_{i},x_{i}^{3})}$. \
To compute the densities, we use the method of \cite{tcmp10} and find
$\rho_{1}=\frac{1}{5}=0.2$, $\rho_{2}=\rho_{5}=\frac{-1+\sqrt{5}}{8\sqrt{5}} \approx 0.069$, 
$\rho_{3}=\rho_{4}=\frac{1+\sqrt{5}}{8\sqrt{5}} \approx 0.181$, $\rho_{6}=\rho_{9}=\frac{5+3\sqrt{5}}{40\sqrt{5}} \approx 0.131$, and $\rho_{7}=\rho_{8}=\frac{-5+3\sqrt{5}}{40\sqrt{5}} \approx 0.019$. 
\ Thus, the existence of a representing measure for $\beta^{(6)}$
is established on the basis of the extensions $M_{4}$ and
$M_{5}$,
 in keeping with Theorem \ref{rdext}.
Note  that in this case, the actual number of extensions leading to
a flat extension can be computed as either $n+m-d-1$ or as
$1+ \text{card}~\mathcal{V}-\text{rank}~M_{3}$, which is consistent with our earlier discussion. 
$\square$
\end{example}

\textit{Acknowledgment}. \ Examples \ref{posexample} and \ref{notRDexample}, and the example of Section \ref{Sect3}, were obtained using
calculations with the software tool \textit{Mathematica \cite{Wol}}. \ 


\section{The extension of a bivariate  $RD$ positive moment matrix }\label{Sect2}

\setcounter{equation}{0}
In Theorem \ref{rdext} (below) we show that a positive recursively determinate
moment matrix $M_{d}\equiv M_{d}(\beta)$,
each of whose column dependence relations is recursively generated by a relation
of the form  
\begin{equation}\label{X} X^{n} = p(X,Y), \; (p\in \mathcal{P}_{n-1})
\end{equation}
or
\begin{equation}\label{Y}
         Y^{m} = q(X,Y), \; (q\in \mathcal{P}_{m}),
\end{equation}
(where $n,m\le d$ are fixed and $q$ has terms $x^{u}y^{v}$ with $v<m$),
 always admits 
a unique $RG$ extension
$$M_{d+1}\equiv \bpm
M_{d} &  B(d+1) \\
B(d+1)^{T} & C(d+1) \epm .
$$
The main step towards Theorem \ref{rdext} is the following result, which shows
that $M_{d}$ (as above) admits
an extension block
$B(d+1)$ that is consistent with the structure of a positive,
recursively generated moment matrix extension $M_{d+1}$.

\begin{theorem}\label{main}
Suppose the bivariate moment matrix
$M_{d}(\beta)$ is positive and recursively generated,
with column dependence relations generated entirely by 
(\ref{X}) and (\ref{Y}) via recursiveness and linearity. \ 
Then there exists a unique moment matrix block
$B(d+1)$ such that
$\bpm
M_{d} &  B(d+1)
\epm$ is recursively generated and $Ran~B(d+1)\subseteq Ran~ M_{d}$.
\end{theorem}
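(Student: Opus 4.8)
The plan is to build $B(d+1)$ column by column, following the algorithm reviewed in Section~\ref{Intro}, and to show that recursiveness and linearity force every entry to be consistently defined. First I would fix notation: $B(d+1) = (B[i,d+1])_{0\le i\le d}$, where blocks $B[0,d+1],\ldots,B[d-1,d+1]$ consist entirely of old moments of $M_d$ and so are already determined; the only genuinely new data lies in $B[d,d+1]$ (new moments of degree $2d+1$) and, later, in $C(d+1)=B[d+1,d+1]$. The columns of $B(d+1)$ are $X^{d+1},\ldots,Y^{d+1}$, and I would organize them into the ``left band'' $X^{d+1},\ldots,X^nY^{d+1-n}$, the ``central band'' $X^{n-1}Y^{d+2-n},\ldots,X^{d+2-m}Y^{m-1}$, and the ``right band'' $X^{d+1-m}Y^m,\ldots,Y^{d+1}$. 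I would define the left-band columns by $X^{n+i}Y^{d+1-i-n} := (x^iy^{d+1-i-n}p)(X,Y)$ and the right-band columns by $X^{d+1-m-i}Y^{m+i} := (x^{d+1-m-i}y^iq)(X,Y)$; since $p\in\mathcal P_{n-1}$ and $q\in\mathcal P_m$ with $n,m\le d$, the polynomials $x^iy^{d+1-i-n}p$ and $x^{d+1-m-i}y^iq$ lie in $\mathcal P_d$, so these prescriptions are just specified linear combinations of columns of $M_d$, hence automatically have range inside $\operatorname{Ran} M_d$. The central-band entries are then pinned down by the Hankel (cross-diagonal) constraint together with \eqref{rightrec}: the new moments appearing in column $X^nY^{d+1-n}$ can be transported up-and-right along cross-diagonals into row $X^d$ of the central band, and \eqref{rightrec} plus its down-and-left iterates fill in column $X^{d+1-m}Y^m$ and the central band.

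The crux is \textbf{consistency}: I must show that the left-band prescription, the right-band prescription, the Hankel property, and \eqref{rightrec} do not conflict — i.e. that each shared entry (each cross-diagonal position of $B[d,d+1]$ accessible from more than one band, and in particular the overlap in block $B[d,d+1]$ between left-band columns and the cross-diagonal transport of central/right data) receives one and only one value, and that once $B[d,d+1]$ is fixed, the remaining blocks $B[i,d+1]$ for $i<d$, already determined by old moments, are consistent with the Hankel structure of the prescribed columns. The key tool is the hypothesis that \emph{every} column dependence relation of $M_d$ is generated from \eqref{X}, \eqref{Y} by recursiveness and linearity: any potential ambiguity in defining an entry of $B(d+1)$ corresponds to two distinct polynomial expressions for the same column $X^aY^b$ of degree $d+1$, whose difference is a polynomial $r$ of degree $\le d+1$ with $r(X,Y)$ forced to vanish; pushing this down via recursiveness (multiplying the degree-$d$ relations), the difference must already be a consequence of relations holding in $\operatorname{Col} M_d$, so $L_\beta$ evaluates it to zero and the two expressions agree. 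Concretely, I would prove a lemma: for any monomial $x^ay^b$ of degree $d+1$ that admits two representations $x^ay^b = s_1$ and $x^ay^b=s_2$ with $s_1,s_2\in\mathcal P_d$ obtained from \eqref{X}--\eqref{Y} by recursion and linearity, one has $(s_1-s_2)(X,Y)=0$ in $\operatorname{Col} M_d$ — this follows because $s_1-s_2$ lies in the ideal-like span of the generating relations within $\mathcal P_d$, and $M_d$ is recursively generated. Then every entry $\langle x^ay^b, X^iY^j\rangle = L_\beta(x^ay^b\cdot x^iy^j)$ is well-defined regardless of which representation is used, which simultaneously gives well-definedness, the Hankel property (since the value depends only on the exponent sums), and $\operatorname{Ran} B(d+1)\subseteq\operatorname{Ran} M_d$.

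Having established that $B(d+1)$ is a well-defined moment-matrix block with range inside $\operatorname{Ran} M_d$, \textbf{uniqueness} is essentially built in: any block $B(d+1)$ making $\begin{pmatrix} M_d & B(d+1)\end{pmatrix}$ recursively generated must have its columns satisfy $X^{d+1}=(x^{d+1-n}p)(X,Y)$, $Y^{d+1}=(y^{d+1-m}q)(X,Y)$ and, by recursiveness applied to \eqref{X}--\eqref{Y}, all the left- and right-band relations; the range condition together with recursiveness then forces the central-band columns via \eqref{rightrec}. So the block we constructed is the only candidate, which proves uniqueness. The main obstacle I anticipate is precisely the consistency bookkeeping in $B[d,d+1]$ — verifying that the up-and-right transport of new moments from column $X^nY^{d+1-n}$, the definition \eqref{rightrec} of $\langle X^{d+1-m}Y^m,X^d\rangle$ and its down-and-left iterates, and the right-band formulas $X^{d+1-m-i}Y^{m+i}=(x^{d+1-m-i}y^iq)(X,Y)$ all agree on overlapping cross-diagonal positions. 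This is where the full strength of the hypothesis ``\emph{all} column relations come from \eqref{X}--\eqref{Y} via recursiveness and linearity'' is used, and I would treat it by reducing each apparent clash to the vanishing of a degree-$(\le d+1)$ polynomial in $\operatorname{Col} M_d$, as above, rather than by direct entry-wise computation; the degree bounds $n,m\le d$ are what keep all the auxiliary polynomials inside $\mathcal P_d$ so that recursive generation of $M_d$ can be invoked.
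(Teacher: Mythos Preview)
Your overall plan follows the paper's, but there is a genuine gap in the degree bookkeeping that undermines the two key steps. You assert that $x^{d+1-m-i}y^{i}q\in\mathcal P_{d}$, and hence that the right-band columns are automatically linear combinations of columns of $M_{d}$ with range inside $\operatorname{Ran}M_{d}$. This is false: since $q\in\mathcal P_{m}$ (it need only lack a $y^{m}$ term), one has $\deg(x^{d+1-m-i}y^{i}q)=d+1$ in general, so the prescription $X^{d+1-m-i}Y^{m+i}:=(x^{d+1-m-i}y^{i}q)(X,Y)$ already involves degree-$(d+1)$ columns of $B(d+1)$ itself, not just columns of $M_{d}$. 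This is exactly why the paper must construct the right band iteratively, one column at a time, feeding in previously defined central-band entries, and why it needs the delicate induction of Lemma~\ref{Hankellemma} to verify that column $X^{d+1-m}Y^{m}$ is Hankel-consistent with the left and central bands.

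The same degree issue breaks your proposed consistency lemma. You want to resolve each clash by writing the column monomial $x^{a}y^{b}$ in two ways as $s_{1},s_{2}\in\mathcal P_{d}$ and arguing that $(s_{1}-s_{2})(X,Y)=0$ in $\operatorname{Col}M_{d}$. But the central-band monomials, those with $a<n$ and $b<m$ (which exist precisely because $n+m>d+1$), admit \emph{no} reduction to $\mathcal P_{d}$ at all: they are irreducible modulo $x^{n}-p$ and $y^{m}-q$. So the entries in the central band, and the right-band entries they feed when $\deg q=m$, are defined by Hankel transport rather than by any $\mathcal P_{d}$-representation, and your lemma says nothing about them. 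The paper's Lemma~\ref{Hankellemma} handles exactly this point by a direct induction on the row index, expanding both sides via the relations and matching moments inside $M_{d}$; no ideal-theoretic shortcut is available. Likewise, range inclusion for the central-band columns is not automatic (they are not given as combinations of columns of $M_{d}$); the paper obtains it separately by first proving that the \emph{rows} of $(M_{d}\ B(d+1))$ inherit the recursive relations (Lemmas~\ref{recXrows}--\ref{recYrows}) and then using this row-recursion to pull each central-band column into $\operatorname{Ran}M_{d}$ (Lemma~\ref{range}). Your outline would work essentially as stated in the special case $\deg q<m$, which is Theorem~\ref{RDnew}, but not in the generality of Theorem~\ref{main}.
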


 The hypothesis implies that the column dependence relations in $M_{d}$
are precisely those of the form
\begin{equation}\label{Xrec}
X^{n+i}Y^{j} = (x^{i}y^{j}p)(X,Y) \quad (i,j\ge 0,~i+j+n\le d)
\end{equation}
and
\begin{equation}\label{Yrec}
X^{k}Y^{m+l}=(x^{k}y^{l}q)(X,Y) \quad (k,l\ge 0, ~k+l+m\le d).
\end{equation}
In particular, the degree $d$ columns $X^{d},\ldots,X^{n}Y^{d-n}$ 
 are recursively determined in terms of
columns of strictly lower degree. \ Since, by (\ref{Yrec}), each column 
 $X^{d-m-k}Y^{m+k}$ ($0\le k\le d-m$) may be
expressed as a  linear combination of columns to its left, it follows
that if $n\le d-m+1$, then $M_{d}$ is flat. \
Since a flat positive moment matrix
admits a unique positive, recursively generated extension (cf. \cite{tcmp1}), we may
assume that not every column of degree $d$ is recursively determined,
i.e., $n>d-m+1$, or
\begin{equation}\label{n+m}
n+m > d+1 .
\end{equation}

We may denote
\begin{equation}\label{X^n}
X^{n} = p(X,Y) \equiv \displaystyle \sum \limits_{r,s\ge 0, r+s\le n-1}
a_{rs}X^{r}Y^{s}
\end{equation}
and
\begin{eqnarray}
Y^{m} &=&q(X,Y)\equiv \sum\limits_{u,v\geq 0,u+v\leq m,v<m}b_{uv}X^{u}Y^{v} \nonumber \\
&\equiv &\sum\limits_{a,b\geq 0,a+b\leq m-1}\alpha
_{ab}X^{a}Y^{b}+\sum\limits_{c,e\geq 0,c+e=m,e<m}\gamma _{ce}X^{c}Y^{e}.
\end{eqnarray}
Thus, in any positive and recursively generated (or merely $RG$) extension
$M_{d+1}$, certain columns of
$B(d+1)$ are recursively determined.
On the left of $B(d+1)$,
 there is a band of
columns,
\begin{eqnarray}\label{Xton_rec}
X^{n+f}Y^{d+1-n-f} &:=& (x^{f}y^{d+1-n-f}p)(X,Y) \nonumber \\
&\equiv& \sum \limits_{r,s\ge 0, r+s\le n-1}^{}a_{rs}X^{r+f}Y^{s+d+1-n-f} \quad (0\le f\le d+1-n)
\end{eqnarray}
each of which is well-defined as a linear combination
of columns of $M_{d}$. \ On the right of $B(d+1)$
there is another band of recursively determined columns,
\begin{eqnarray}\label{Y^m_rec}
X^{d+1-m-g}Y^{m+g} &:=& (x^{d+1-m-g}y^{g}q)(X,Y) \nonumber \\
&\equiv& \sum \limits_{u,v\ge 0, u+v\le m, v<m}^{}b_{uv}X^{u+d+1-m-g}Y^{v+g} \quad (0\le g\le d+1-m). \quad \quad
\end{eqnarray}
If $deg~q=m$, the sum in (\ref{Y^m_rec}) may involve
columns from the middle band,
$X^{u+d+1-m-g}Y^{v+g}$ ($u+v=m, u+d+1-m-n < g < m-v$),
which has not yet been defined, so some care is needed in
implementing (\ref{Y^m_rec}). 

The proof of Theorem \ref{main} entails two main steps, which we prove in detail in Section \ref{PROOF}: the construction of the 
block $B(d+1)$, and the verification of the inclusion $Ran~B(d+1) \subseteq Ran~M_d$. \ Assuming that we have already built a unique block $B(d+1)$ 
consistent with the existence of a positive, recursively generated extension
\begin{equation*}
M_{d+1}\equiv
\bpm
M_{d}  &  B(d+1)     \\
B(d+1)^{T}  &  C(d+1)  
\epm,
\end{equation*}
we next use this to construct a unique block $C(d+1)$ consistent with the existence of an $RG$ extension. 

\begin{cor}\label{Cblock}
If $M_{d}$ satisfies the hypotheses of Theorem \ref{main}, then
there exists a unique moment matrix block $C\equiv C(n+1)$
consistent with the structure of an $RG$ extension $M_{d+1}$.
\end{cor}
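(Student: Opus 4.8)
The plan is to mirror the construction of $B(d+1)$ from Theorem \ref{main}, now working one degree higher: the block $C(d+1) = B[d+1,d+1]$ is a Hankel-structured symmetric block whose columns are indexed by $X^{d+1},\ldots,Y^{d+1}$, and whose entries are the moments of degree $2d+2$ sitting in the prospective extension $M_{d+1}$. Since $\bpm M_d & B(d+1)\epm$ is already recursively generated by Theorem \ref{main}, the relations (\ref{X}) and (\ref{Y}), promoted via recursiveness to $\mathcal{P}_{d+1}$, force all but the ``central band'' of columns of $B(d+1)^T$, and hence will force all but finitely many entries of $C(d+1)$. Concretely, I would first use the left band relations $X^{n+f}Y^{d+1-n-f} = (x^fy^{d+1-n-f}p)(X,Y)$ (as in (\ref{Xton_rec})), now read inside $B(d+1)^T$, to express every column of $C(d+1)$ of the form $X^{n+f}Y^{d+1-n-f}$ as a linear combination of already-available columns (namely columns of $B(d+1)$), exactly as in the $B(d+1)$ construction; these are unambiguously defined because $\bpm M_d & B(d+1)\epm$ satisfies $Ran~B(d+1)\subseteq Ran~M_d$, so the column relations genuinely hold and can be applied.

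Next I would handle the right band $X^{d+1-m-g}Y^{m+g}$ via (\ref{Y^m_rec})-type relations $X^{d+1-m-g}Y^{m+g} := (x^{d+1-m-g}y^gq)(X,Y)$ read in $B(d+1)^T$, and then, as in the algorithm described in Section \ref{Intro}, define the remaining central-band entries of $C(d+1)$ by first moving new moments up cross-diagonals into row $X^{d+1}$, applying (\ref{rightrec})-type identities $\langle X^{d+2-m}Y^{m}, X^{d+1}\rangle := \langle (x^{d+2-m}q)(X,Y), X^{d+1}\rangle$, and sliding the resulting value back down the cross-diagonal. The key point needing verification — and I expect this to be the main obstacle — is \emph{consistency}: that the central-band entries produced by propagating from the $q$-relation agree with those forced by the Hankel property from the left-band columns, so that $C(d+1)$ is genuinely a well-defined, symmetric, Hankel-structured moment matrix block. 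This is the same kind of compatibility checked in the $B(d+1)$ construction (the part flagged after (\ref{Y^m_rec}) about ``some care'' when $deg~q=m$), and the resolution should again come from the hypothesis that \emph{all} column relations of $M_d$ are generated by (\ref{X})-(\ref{Y}) via recursiveness and linearity: this rigidity means the two ways of computing a central entry both reduce, via the same relations, to the same $L_\beta$-value.

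For uniqueness, I would argue that any block $C$ consistent with an $RG$ extension $M_{d+1}$ must satisfy property (iv) of the excerpt — every dependence relation in $Col~M_d$ extends to $Col~M_{d+1}$ — together with recursive generation in $\mathcal{P}_{d+1}$; since $M_d$'s relations come entirely from (\ref{X}) and (\ref{Y}), these extended relations pin down every column of $C$ except, a priori, the central band, and the central band is then pinned down by the Hankel property applied to the already-determined columns (each central column shares a cross-diagonal, inside the full block $M_{d+1}$, with a forced entry). Hence any two admissible $C$'s agree entrywise. Finally I would record that the $C(d+1)$ so constructed, combined with the $B(d+1)$ of Theorem \ref{main}, yields a matrix $M_{d+1}$ in which (\ref{X}), (\ref{Y}) hold throughout $Col~M_{d+1}$ and recursiveness holds in $\mathcal{P}_{d+1}$, i.e.\ an $RG$ extension in the sense defined in the introduction — leaving only positivity (condition (iii)) unverified, as promised. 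The routine but lengthy bookkeeping — indexing the cross-diagonals correctly and checking each overlap — is exactly the computation I would defer to the detailed proof in Section \ref{PROOF}.
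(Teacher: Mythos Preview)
Your proposal is correct and follows essentially the same approach as the paper: both argue that the construction of $C(d+1)$ is a formal repetition of the $B(d+1)$ construction from Theorem \ref{main}, with $M_d$ replaced by $B(d+1)^T$ and $B[d,d+1]$ replaced by $B[d+1,d+1]$, invoking the analogues of Lemmas \ref{hankel}, \ref{Hankellemma}, and \ref{righthankel} for the Hankel consistency checks. One minor point: your description places the right-band construction before the central-band construction, but (as you yourself note by referring back to the Section \ref{Intro} algorithm) when $\deg q = m$ the first right-band column $X^{d+1-m}Y^m$ must be built \emph{simultaneously} with the central band, row by row; the paper makes this ordering explicit (``starting in row $X^{n+1}$''), so be sure your detailed write-up respects it.
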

\begin{proof}
In any $RG$ extension 
$M_{d+1}$
the column relations (\ref{Xton_rec}) and (\ref{Y^m_rec}) must hold. \ The proof of Theorem \ref{main} shows that these relations define a unique moment matrix block $B\equiv B(d+1)$ consistent
with positivity and recursiveness.
To define $C\equiv C(n+1)$, we may formally repeat the proof of
Theorem \ref{main} concerning the well-definedness and uniqueness of block
$B(d+1)$, but applying the argument with $M_d$ replaced with $B(d+1)^{T}$,
and $B[d,d+1]$ replaced with $C\equiv B[d+1,d+1]$. \ In brief, we use
$B(d+1)^{T}$ and (\ref{Xton_rec}) to define the left recursive
band in $C$. \ We then define column $X^{d+1-m}Y^{m}$ 
by applying  (\ref{Y^m_rec}) successively, 
starting in row $X^{n+1}$, so that this column is Hankel with respect to the
central band, which we are completing simultaneously. \ We then use (\ref{Y^m_rec}) to successively define the remaining columns
on the right. \ Lemma \ref{hankel} can be used to show that the left band is internally
Hankel, and an adaptation of the argument in Lemma \ref{Hankellemma} can be used to show
that column $X^{d+1-m}Y^{m}$ is Hankel with respect to the left and
central blocks. \ Finally, the argument of Lemma \ref{righthankel} can be adapted to show that
that the right band is also Hankel. 
\end{proof}

By combining Theorem \ref{main} with Corollary \ref{Cblock}, we immediately obtain the first of our main results, which follows.

\begin{theorem}\label{rdext} If $M_{d}$ is positive, with column relations generated entirely by (\ref{X}) and (\ref{Y}) 
via recursiveness and linearity, then $M_{d}$ admits a unique $RG$ extension $M_{d+1}$, i.e., $Ran~B(n+1)\subseteq Ran~M_{d}$, (\ref{Xton_rec})-(\ref{Y^m_rec}) hold 
in $Col \; M_{d+1}$, and $M_{d+1}$ is recursively generated.
\end{theorem}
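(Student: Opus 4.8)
The plan is to assemble Theorem~\ref{main} and Corollary~\ref{Cblock}, and then check, clause by clause, that the resulting matrix satisfies the definition of an $RG$ extension. First I would apply Theorem~\ref{main}: the hypothesis here is exactly its hypothesis (and, as noted after its statement, it forces the column dependence relations of $M_{d}$ to be precisely those in (\ref{Xrec})--(\ref{Yrec})), so there is a unique moment matrix block $B(d+1)$ with $(M_{d}\;B(d+1))$ recursively generated and $Ran~B(d+1)\subseteq Ran~M_{d}$; by construction the left and right recursive bands (\ref{Xton_rec})--(\ref{Y^m_rec}) hold among the columns of $(M_{d}\;B(d+1))$. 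Next I would apply Corollary~\ref{Cblock} to produce the unique block $C(d+1)$ for which
$$
M_{d+1}:=\bpm M_{d} & B(d+1)\\ B(d+1)^{T} & C(d+1)\epm
$$
is a moment matrix; as recalled in the proof of that corollary, $C(d+1)$ is built by mirroring the construction of $B(d+1)$ with $M_{d}$ replaced by $B(d+1)^{T}$, so that (\ref{Xton_rec})--(\ref{Y^m_rec}) now hold in $Col~M_{d+1}$ and, moreover, the block $(B(d+1)^{T}\;C(d+1))$ is itself recursively generated.

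It then remains to verify the four requirements for $M_{d+1}$ to be an $RG$ extension. Requirement (i) is the hypothesis on $M_{d}$, and (ii), namely $Ran~B(d+1)\subseteq Ran~M_{d}$, is the conclusion of Theorem~\ref{main}. Requirement (iv) follows from (ii) and the symmetry of $M_{d}$: if $r\in\mathcal{P}_{d}$ with $M_{d}\hat r=0$, then $\hat r\perp Ran~M_{d}\supseteq Ran~B(d+1)$, hence $B(d+1)^{T}\hat r=0$, and therefore, after padding $\hat r$ with zeros in the degree-$(d+1)$ entries,
$$
M_{d+1}\hat r=\bpm M_{d}\hat r\\ B(d+1)^{T}\hat r\epm=\bpm 0\\ 0\epm,
$$
so each column dependence relation of $M_{d}$ extends to $Col~M_{d+1}$. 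Finally, to see that $M_{d+1}$ is recursively generated, take $f,g,fg\in\mathcal{P}_{d+1}$ with $f(X,Y)=0$ in $Col~M_{d+1}$, i.e. $M_{d+1}\hat f=0$; restricting this equation to the rows of degree at most $d$ gives $(M_{d}\;B(d+1))\hat f=0$, and restricting it to the rows of degree $d+1$ gives $(B(d+1)^{T}\;C(d+1))\hat f=0$. Recursive generation of $(M_{d}\;B(d+1))$ then yields $(M_{d}\;B(d+1))\widehat{fg}=0$, and recursive generation of $(B(d+1)^{T}\;C(d+1))$ yields $(B(d+1)^{T}\;C(d+1))\widehat{fg}=0$; reassembling these two block-equations gives $M_{d+1}\widehat{fg}=0$, i.e. $(fg)(X,Y)=0$. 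Hence $M_{d+1}$ is an $RG$ extension in which (\ref{Xton_rec})--(\ref{Y^m_rec}) hold.

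For uniqueness, let $M_{d+1}'$ be any $RG$ extension of $M_{d}$, with blocks $B'$ and $C'$. Since $(M_{d}\;B')$ is recursively generated and $X^{n}=p(X,Y)$, $Y^{m}=q(X,Y)$ hold in $M_{d}$, recursiveness forces the left band (\ref{Xton_rec}) in $(M_{d}\;B')$ and, with the care needed when $deg~q=m$ (handled exactly as in the proof of Theorem~\ref{main}), the right band (\ref{Y^m_rec}); the remaining central-band columns of $B'$ are then determined by the Hankel property together with $Ran~B'\subseteq Ran~M_{d}$. By the uniqueness clause of Theorem~\ref{main}, $B'=B(d+1)$, and then the uniqueness clause of Corollary~\ref{Cblock} forces $C'=C(d+1)$, so $M_{d+1}'=M_{d+1}$.

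The only step that is more than bookkeeping is the recursive generation of the full $M_{d+1}$: it rests on knowing that $(B(d+1)^{T}\;C(d+1))$ carries the same recursive structure as $(M_{d}\;B(d+1))$, which is exactly what the mirror-image construction in the proof of Corollary~\ref{Cblock} supplies; granted that, recursive generation of $M_{d+1}$ reduces to recursive generation of its two horizontal blocks, as above.
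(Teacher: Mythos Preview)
Your proposal follows exactly the paper's route: the paper's proof of Theorem~\ref{rdext} is the single sentence ``By combining Theorem~\ref{main} with Corollary~\ref{Cblock}, we immediately obtain the first of our main results,'' and you have carefully unpacked that sentence, verifying clauses (i), (ii), (iv), recursiveness, and uniqueness in turn. The verification of (iv) via $\ker M_d=(Ran~M_d)^\perp$ and the uniqueness argument are both clean and correct.

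There is one imprecision worth flagging in your recursiveness argument. You invoke ``recursive generation of $(B(d+1)^{T}\;C(d+1))$'' as if it were a property of that block alone, deducing $(B(d+1)^{T}\;C(d+1))\widehat{fg}=0$ from $(B(d+1)^{T}\;C(d+1))\hat f=0$. But this bottom block has only $d+2$ rows and many more columns, so its kernel contains a host of ``accidental'' relations unrelated to (\ref{X})--(\ref{Y}), and such relations are \emph{not} preserved under multiplication by $g$. What Corollary~\ref{Cblock} (together with Lemmas~\ref{recXrows}--\ref{recYrows}) actually supplies is that the specific relations (\ref{Xrec})--(\ref{Yrec}) and (\ref{Xton_rec})--(\ref{Y^m_rec}) hold in $Col(B(d+1)^{T}\;C(d+1))$; that is the ``same recursive structure'' you allude to in your closing paragraph, but it is weaker than recursive generation of the block. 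The fix is immediate and uses the special hypothesis of the theorem: from $M_{d+1}\hat f=0$ with $\deg f\le d$ (the case $\deg f=d+1$ being trivial), restriction gives $f(X,Y)=0$ in $Col~M_d$, so by hypothesis $f$ lies in the linear span of the polynomials $x^iy^j(x^n-p)$ and $x^ky^l(y^m-q)$ of degree $\le d$; then $fg$ lies in the span of such multiples of degree $\le d+1$, each of which is one of (\ref{Xrec})--(\ref{Yrec}) or (\ref{Xton_rec})--(\ref{Y^m_rec}) and therefore vanishes in $Col~M_{d+1}$ by the constructions in Theorem~\ref{main} and Corollary~\ref{Cblock}. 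With this adjustment your argument is complete and matches the paper's approach.
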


\begin{cor}\label{flatext}
If $M_{d}$ satisfies the hypotheses of Theorem \ref{rdext} and $d=n+m-2$, then
$M_{d}$ admits a flat moment matrix extension $M_{d+1}$ (and
$\beta$ admits a $\text{rank}~M_{d}$-atomic representing measure).
\end{cor}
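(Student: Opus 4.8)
The plan is to show that the unique $RG$ extension $M_{d+1}$ furnished by Theorem \ref{rdext} is automatically a \emph{flat} extension of $M_{d}$; granting this, its positive semidefiniteness is forced, and a $\text{rank}~M_{d}$-atomic representing measure for $\beta$ is produced by the flat extension theorem of \cite{tcmp1}.

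First I would record the numerology implied by the hypothesis. Since $d = n+m-2$, the inequality (\ref{n+m}) holds with equality, $n+m = d+2$, so every monomial $x^{a}y^{b}$ of degree $d+1 = n+m-1$ satisfies $a\ge n$ or $b\ge m$ (otherwise $a+b \le n+m-2 < d+1$); equivalently, the ``central band'' at degree $d+1$ is empty. Moreover, by Theorem \ref{rdext}, $M_{d+1}$ is recursively generated and the column relations (\ref{X}), (\ref{Y}) extend to it, so the identities $X^{n+i}Y^{j} = (x^{i}y^{j}p)(X,Y)$ (whenever $i+j+n\le d+1$) and $X^{k}Y^{m+l} = (x^{k}y^{l}q)(X,Y)$ (whenever $k+l+m\le d+1$) all hold in $Col~M_{d+1}$.

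The main step is to prove that every column of $M_{d+1}$ of degree $d+1$ lies in the span of the columns of degree at most $d$. For a column $X^{a}Y^{b}$ with $a+b = d+1$ and $a\ge n$, the relation $X^{a}Y^{b} = (x^{a-n}y^{b}p)(X,Y)$ exhibits it as a combination of columns of degree $\le a+b-1 = d$, since $\deg p\le n-1$. For the remaining columns we have $a\le n-1$, hence $b\ge m$, and $X^{a}Y^{b} = (x^{a}y^{b-m}q)(X,Y)$; if $\deg q < m$ this again involves only columns of degree $< d+1$, but if $\deg q = m$ the top-degree part of $q$ contributes additional degree-$(d+1)$ columns of the form $X^{a+u}Y^{b-u}$ with $u\ge 1$. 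I would dispose of these by downward induction on $a$: each such auxiliary column has $X$-exponent $a+u > a$, so it is covered either by the case $a\ge n$ just treated or, when $a+u\le n-1$, by the inductive hypothesis; the base case $a = n-1$ is immediate because then $a+u\ge n$ for every $u\ge 1$. The recursion terminates because the $X$-exponent strictly increases at each step and is bounded by $n+m-1$. I expect this bootstrapping to be the only real obstacle: the case $\deg q = m$ is precisely where ``recursively determined'' does not mean ``of strictly lower degree,'' so a genuine induction is needed, and one must check that the shifted $q$-relations invoked are legitimately available in $Col~M_{d+1}$ and that the degree accounting is correct.

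Finally I would cash in flatness. The degree-$\le d$ columns of $M_{d+1}$ are the columns of $\bpm M_{d} \\ B(d+1)^{T} \epm$ and the degree-$(d+1)$ columns are those of $\bpm B(d+1) \\ C(d+1) \epm$, so the step above gives $\bpm B(d+1) \\ C(d+1) \epm = \bpm M_{d} \\ B(d+1)^{T} \epm W$ for some matrix $W$; reading off the two block rows yields $B(d+1) = M_{d}W$ and $C(d+1) = B(d+1)^{T}W = W^{T}M_{d}W$. Hence $M_{d+1} = \bpm I \\ W^{T} \epm M_{d} \bpm I & W \epm \succeq 0$ because $M_{d}\succeq 0$, and $\text{rank}~M_{d+1} = \text{rank}~(M_{d}\;M_{d}W) = \text{rank}~M_{d}$ since $Ran~B(d+1)\subseteq Ran~M_{d}$. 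Thus $M_{d+1}$ is a positive, flat moment matrix extension of $M_{d}$, and \cite{tcmp1} produces a $\text{rank}~M_{d}$-atomic representing measure for $\beta$, completing the proof.
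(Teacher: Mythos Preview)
Your proposal is correct and follows essentially the same approach as the paper: observe that $d=n+m-2$ kills the central band at degree $d+1$, so every degree-$(d+1)$ column is recursively determined from (\ref{X}) or (\ref{Y}) and hence lies in the span of lower-degree columns. The paper dispatches the right band in one sentence by pointing to (\ref{Y^m_rec}), tacitly relying on the sequential (left-to-right) order in which those columns were constructed; your explicit downward induction on the $X$-exponent when $\deg q=m$ makes that implicit step transparent, and your final paragraph verifying positivity from $C(d+1)=W^{T}M_{d}W$ is a detail the paper simply absorbs into the citation of the flat extension theorem.
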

\begin{proof}
Each column in the left band is, from (\ref{Xton_rec}), a linear combination
of columns of strictly lower degree.
Since $d=n+m-2$, there is no central band in the construction of $B(d+1)$ in Theorem \ref{main} and
of $C(d+1)$ in Corollary \ref{Cblock}. \ It thus follows
from (\ref{Y^m_rec}) that
each column in the right band is also a linear combination of columns
of strictly lower degree, so $M_{d+1}$ is a flat extension.
\end{proof}

To illustrate Corollary \ref{flatext} in the simplest case, let $n=m=d=2$ and suppose that $M_2$ satisfies the hypotheses of Theorem \ref{rdext}.
\ It follows from \cite{tcmp6} that $M_{2}$
admits a representing measure if and only if the equations
$x^{2}-p(x,y) = 0$ and $y^{2}-q(x,y)=0$ have at least 4 common real zeros.
Corollary \ref{flatext} implies that the latter ``variety condition" is superfluous; indeed, from Corollary \ref{flatext}, there {\it{is}} a representing measure, so \cite{tcmp6} implies that the system {\it{must}} have at least 4 ($= \text{rank}~M_{2}$) common real zeros.

Note that if $M_{d}(\beta)$ satisfies the hypothesis of Theorem \ref{rdext}, then the existence or nonexistence of a representing measure for $\beta$ will be established in at most $d-1$ extension steps (after which the central band would vanish and every column of $M_{2d-1}$ would be recursively determined). \ The next result shows that for every $d\ge 2$, there exists $M_{d}(\beta)$, satisfying the conditions of Theorem \ref{rdext}, for 
which the determination that a representing measure exists entails the maximum number of extension steps, each of which falls within the scope of Theorem \ref{rdext}. \ 
\begin{theorem}\label{gridthm}
For $d \ge 1$, there exists a moment matrix $M_{d}$, satisfying the conditions of Theorem \ref{rdext}, for which 
the extension algorithm determines successive positive, recursively generated extensions $M_{d+1},\ldots,M_{2d-1}$, and for which the first 
flat extension occurs at $M_{2d-1}$. \ Moreover, each extension $M_{d+i}$ satisfies the conditions of Theorem \ref{rdext}, so
to continue the sequence it is only necessary to verify that the $RG$ extension $M_{d+i+1}$ is positive semidefinite.
\end{theorem}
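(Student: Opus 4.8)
The plan is to exhibit an explicit family of moment matrices $M_d$ — one for each $d \ge 1$ — whose column relations are $X^2 = $ (something of degree $\le 1$) in the $x$-direction and $Y^d = q(X,Y)$ in the $y$-direction, chosen so that $n = 2$ and $m = d$. With $n = 2$ and $m = d$, the central band at level $d$ has $n + m - d - 1 = 1$ column, and at level $d+k$ it has $1 - k + \ldots$; more precisely, following the discussion in the paper, the central band of degree $d+k$ has $n + m - (d+k) - 1 = 1 - k$ columns — wait, that vanishes at $k=1$. So instead I want $n$ and $m$ arranged so that the central band has length exactly $d - 1$ at the starting level, i.e. $n + m - d - 1 = d - 1$, hence $n + m = 2d$. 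The cleanest choice is $n = m = d$, so that the first flat extension, occurring when the central band disappears, is at $M_{n+m-1} = M_{2d-1}$. Concretely I would take $\beta^{(2d)}$ supported on (or approximating) a configuration of points on the curve $\{x^d = $ linear$\}$ arranged so that no lower-degree dependence is forced: a natural candidate is moments of a measure on a union of $\text{rank}~M_d$ points lying on $y = x^d$ (or a grid-like variety intersecting $x^d - p$ and $y^d - q$) where the number of atoms is strictly larger than $\binom{d+2}{2}$ minus the number of forced relations, so that $M_d$ is not flat but has exactly the two relations (\ref{xrelation})–(\ref{yrelation}).

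The key steps, in order, would be: (1) write down the candidate sequence $\beta^{(2d)}$ explicitly — e.g. as the moment data of a specific finitely atomic measure $\mu$ whose support is a finite subset of the variety $\mathcal{Z}_{x^d - p} \cap \mathcal{Z}_{y^d - q}$ with cardinality chosen so that $\text{rank}~M_d < \text{card}~\mathcal{V}$; (2) verify directly that $M_d \succeq 0$, is recursively generated, and that its only column dependence relations are those generated by $X^d = p(X,Y)$ and $Y^d = q(X,Y)$ via recursiveness and linearity, so that the hypotheses of Theorem \ref{rdext} hold with $n = m = d$; (3) invoke Theorem \ref{rdext} (and Corollary \ref{Cblock}) repeatedly to conclude that at each stage $M_{d+i}$ admits a unique $RG$ extension $M_{d+i+1}$, that this extension again satisfies the hypotheses of Theorem \ref{rdext} (because the two defining relations persist and propagate, with the central band shrinking by exactly one column per step), and that positivity is the only thing left to check at each step; (4) show that each $M_{d+i}$ for $i < d-1$ is genuinely \emph{not} flat — i.e. the central band of degree $d + i$ is nonempty and contributes a column of degree $d+i$ that is not a combination of lower-degree columns — by a rank count: $\text{rank}~M_{d+i} = \text{rank}~M_d + i$ as long as the central band survives, so the rank strictly increases until step $d - 1$; (5) show $M_{2d-1}$ \emph{is} flat, since at that level $n + m - (2d-1) - 1 = 0$, the central band has vanished, and every degree-$(2d-1)$ column is recursively determined from (\ref{Xrec})–(\ref{Yrec}); hence $M_{2d-1}$ is a flat extension of $M_{2d-2}$ and $\beta$ has a representing measure by \cite{tcmp1}.

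For step (3), the verification that positivity \emph{does} hold at every intermediate step (so that the sequence $M_{d+1},\ldots,M_{2d-1}$ actually consists of positive extensions, as the theorem asserts) is cleanest if I choose $\beta^{(2d)}$ from the outset to be $\beta[\mu]$ for a genuine finitely atomic measure $\mu$: then $M_{2d-1}[\mu]$ exists, is positive and recursively generated, and its flatness at level $2d-1$ is forced by the ranks; and the uniqueness clause in Theorem \ref{rdext} identifies the algorithmically-produced $M_{d+i}$ with $M_{d+i}[\mu]$ at each stage, so positivity is inherited for free. The remaining arithmetic — checking that the support of $\mu$ has the right cardinality and that $M_d[\mu]$ has exactly the two advertised relations and no others — is the genuine content.

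The main obstacle I anticipate is step (2): constructing the measure $\mu$ (equivalently the support configuration) so that $M_d[\mu]$ has \emph{precisely} the two column relations $X^d = p(X,Y)$, $Y^d = q(X,Y)$ and no additional, unexpected lower-degree dependence — and simultaneously so that $\text{card}(\text{supp}~\mu) = \text{rank}~M_d + (d-1)$, which is exactly what makes the first flat extension land at $M_{2d-1}$ rather than earlier (by the estimate $1 + \text{card}~\mathcal{V} - \text{rank}~M_d$ from \cite{finitevariety}). Getting these two requirements to hold together for \emph{every} $d$ — not just small cases — will likely require a uniform, parametrized family (e.g. points on $y = x^d$ at carefully spaced abscissas, or a product/grid construction in disguise), and proving the rank and relation claims will come down to a Vandermonde-type nondegeneracy computation valid for all $d$.
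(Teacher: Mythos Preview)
Your plan is essentially the paper's proof: the paper takes $\mu$ to be any measure supported exactly on the $d\times d$ grid $G=\{(x_i,y_j)\}_{1\le i,j\le d}$ (with the $x_i$ distinct and the $y_j$ distinct), so that $n=m=d$ with $X^d=p(X)$ and $Y^d=q(Y)$ coming from $P=\prod_i(x-x_i)$ and $Q=\prod_j(y-y_j)$; your anticipated ``Vandermonde-type nondegeneracy computation'' is precisely how step~(2) is carried out---via the invertibility of the generalized Vandermonde matrix on the grid (equivalently, Alon's Combinatorial Nullstellensatz)---yielding that every $f$ with $f(X,Y)=0$ in $Col~M_{d+k}$ lies in the ideal $(P,Q)$ with a degree-bounded representation, so the hypotheses of Theorem~\ref{rdext} propagate to each $M_{d+k}$. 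Your identification of the algorithmic extensions with $M_{d+k}[\mu]$ via the uniqueness clause is exactly what the paper does to get positivity for free.

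One correction to your rank count: the increment from $M_{d+k-1}$ to $M_{d+k}$ is not $1$ but $(d+k+1)-2(k+1)=d-k-1$ (number of new degree-$(d{+}k)$ columns minus the number of recursively determined ones), so the rank strictly increases for $1\le k\le d-2$ and the first flat extension is at $k=d-1$; correspondingly $\text{card}(\text{supp}~\mu)=d^2$, not $\text{rank}~M_d+(d-1)$, and the estimate $1+\text{card}~\mathcal{V}-\text{rank}~M_d$ is in fact loose for $d\ge 4$. Also, the alternative you float---points on $y=x^d$---would not give two independent degree-$d$ relations of the required shape; the grid is the right choice.
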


\begin{remark} \label{newrmk}
To illustrate the significance of Theorem \ref{gridthm}, let us compare it to the following result of \cite[Theorem 1.2]{tcmp9}: 
If $M_d(\beta)$ is a
bivariate moment matrix with a column relation $p(X,Y) = 0 ~ (deg ~ p \le 2)$, then $\beta$ has a measure if and only if $M_d$ is
positive, recursively generated, and $\text{rank} ~ M_d \le \text{card} ~ \mathcal{V}(M_{d})$. \ In this result, we see that the existence of a measure can
be determined directly from the data by establishing the positivity, rank, and variety of $M_d$. \ By contrast, in Theorem \ref{gridthm}
we see that it may be necessary to extend $M_d$ to $M_{2d-1}$ in order to establish that a measure exists. \ In this sense, within
the framework of moment matrices, we see that the general case of the truncated moment problem cannot be solved in ``closed form."
\ We may therefore seek to go beyond the framework of moment matrices. \ Recall that for $\beta \equiv \beta^{(2d)}$, 
$L_{\beta}$ is {\it{positive}} if $p\in \mathcal{P}_{2d}$, $p|_{R^d} \ge 0 \Longrightarrow L_{\beta}(p) \ge 0$. \ In \cite{tcmp12} 
we showed that $\beta$ admits a representing measure if and only if $L_\beta$ admits a positive extension 
$L:\mathcal{P}_{2d+2} \longrightarrow \mathbb{R}$. \ Thus, as an alternative to constructing all of the extensions $M_{d+1},\cdots, M_{2d+1},$ 
in principle it would suffice to test the positivity of the Riesz functional corresponding to $M_{d+1}$. \ Unfortunately, at present there 
is no known concrete test for positivity of Riesz functionals (except in special cases, cf. \cite{tcmp12}, \cite{FN1}, \cite{FN2}), 
so the moment matrix extension algorithm remains the most viable approach to resolving the existence of a representing measure in 
the bivariate $RD$ case.
\end{remark}

For the proof of Theorem \ref{gridthm}, we require some preliminaries. \ For $d\ge 1$, suppose 
$x_{1},\ldots, x_{d}$ are distinct and  $y_{1},\ldots, y_{d}$ are distinct. \ Let $P(x,y) := (x-x_{1})\cdots (x-x_{d})$,  $Q(x,y):=(y-y_{1})\cdots (y-y_{d})$, and set $\mathcal{Z}_{P,Q}:= \{(x_{i},y_{j})\}_{1\le i,j \le d}$, the common zeros of $P$ and $Q$. \ 
Let  $J$ be an  ideal in $\mathbb{R}[x,y]$ with real variety $\mathcal{V}\equiv
\mathcal{V}(J):= \{(x,y)\in \mathbb{R}^{2}:s(x,y) = 0~ \forall s\in J\}$. \ Let $I(\mathcal{V})
=\{f \in \mathbb{R}[x,y]: f|\mathcal{V} \equiv 0 \}$. \ In general, $I(\mathcal{V}(J))$ may be strictly larger than $J$ \cite{CLO}. \
However, for $J:=(P,Q)$ (with $P$ and $Q$ as above), we will show below (Proposition \ref{prop 213}) that 
each element of $I(\mathcal{V}(J))$
admits a ``degree-bounded" representation which displays it as a member of
 $J$; in particular,  $J$ is a {\it{real ideal}}
in the sense of \cite{Mo}. \ Although this result may well be known, we could not find a reference, so we
include a proof for the sake of completeness. \ First, we need three auxiliary results.

\begin{lemma} \label{divisionalg} (The Division Algorithm in $\mathbb{R}[x_{1},\cdots ,x_{n}]$ \cite[Section 2.3, Theorem 3]{CLO}) \ 
Fix a monomial order $>$ on $\mathbb{Z}_{\geq 0}^{n}$ and let $F=(f_{1},\cdots ,f_{s})$ be an ordered $s$-tuple of
polynomials in $\mathbb{R}[x_{1},\cdots ,x_{n}]$. \ Then every $f\in \mathbb{R}[x_{1},\cdots ,x_{n}]$ can be written as 
\begin{equation*}
f=a_{1}f_{1}+\cdots +a_{s}f_{s}+r,
\end{equation*}%
where $a_{i},\in \mathbb{R}[x_{1},\cdots ,x_{n}]$, and either 
$r=0$ or $r$ is a linear combination, with coefficients in $\mathbb{R}$, of
monomials, none of which is divisible by any of the leading terms in $f_{1},\cdots ,f_{s}$.

Furthermore, if $a_{i}f_{i}\neq 0$, then we have $\text{multideg}~(f)\geq \text{multideg}~(a_{i}f_{i})$.
\end{lemma}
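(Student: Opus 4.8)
The plan is to prove the lemma constructively, by writing down the division algorithm and checking that it terminates with output of the asserted shape; since this is the classical argument of \cite{CLO}, I will only outline the steps. First I would fix the bookkeeping: set $a_{1}=\cdots =a_{s}=0$, $r=0$, and introduce an auxiliary ``dividend'' $p$, initialized to $p=f$. I would then iterate the following step so long as $p\neq 0$: if $\text{LT}(p)$ is divisible by $\text{LT}(f_{i})$ for some $i$, pick the least such index $i$, set $t:=\text{LT}(p)/\text{LT}(f_{i})$, and replace $a_{i}$ by $a_{i}+t$ and $p$ by $p-t f_{i}$; otherwise replace $r$ by $r+\text{LT}(p)$ and $p$ by $p-\text{LT}(p)$. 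By inspection, each elementary replacement preserves the identity $f=a_{1}f_{1}+\cdots +a_{s}f_{s}+p+r$, so this holds throughout the run of the algorithm.

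Next I would establish termination. In a step of the first kind, $t f_{i}$ and $p$ have the common leading term $\text{LT}(p)$, so $\text{multideg}(p-tf_{i})<\text{multideg}(p)$; in a step of the second kind, clearly $\text{multideg}(p-\text{LT}(p))<\text{multideg}(p)$. Hence $\text{multideg}(p)$ strictly decreases in the monomial order at every iteration, and because a monomial order is a well-ordering of $\mathbb{Z}_{\geq 0}^{n}$, the algorithm halts after finitely many steps. When it halts, $p=0$, so the invariant reads $f=a_{1}f_{1}+\cdots +a_{s}f_{s}+r$. Since a monomial is appended to $r$ only when $\text{LT}(p)$ is divisible by none of $\text{LT}(f_{1}),\ldots ,\text{LT}(f_{s})$, every monomial of $r$ is nondivisible by each of these leading terms, which is exactly the description of $r$ in the statement.

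Finally, for the degree estimate I would observe that $\text{multideg}(p)\le \text{multideg}(f)$ at all times, since it begins equal and only decreases. Every term ever added to a given $a_{i}$ is of the form $t=\text{LT}(p)/\text{LT}(f_{i})$ with $\text{multideg}(tf_{i})=\text{multideg}(p)\le \text{multideg}(f)$, so each monomial occurring in the expanded product $a_{i}f_{i}$ has multidegree at most $\text{multideg}(f)$; hence $\text{multideg}(f)\ge \text{multideg}(a_{i}f_{i})$ whenever $a_{i}f_{i}\neq 0$. The one genuinely substantive ingredient here is the termination argument, which relies on the well-ordering property of monomial orders (equivalently, Dickson's lemma); everything else is routine verification of the loop invariant and degree bookkeeping.
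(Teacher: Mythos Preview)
Your proposal is correct and follows the standard constructive argument from \cite[Section~2.3, Theorem~3]{CLO}. The paper itself does not supply a proof of this lemma at all; it merely states the result and cites \cite{CLO}, so there is no in-paper argument to compare against, and what you have written is precisely the proof the citation points to.
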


\begin{lemma} \cite[p. 67]{Sau} \label{Sauer} \ For $N \geq 1$ let ${v_1,\cdots,v_N}$ be distinct points in 
$\mathbb{R}^2$, and consider the multivariable
Vandermonde matrix $V_{N}:=(v_i^{\alpha})_{1 \le i \le N, \alpha \in \mathbb{Z}_+^2, \left|\alpha\right| \leq N-1}$, of size $N \times \frac{N(N+1)}{2}$. \ Then the rank of $V_N$ equals $N$.
\end{lemma}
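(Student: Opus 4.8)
The plan is to prove that the $N \times \frac{N(N+1)}{2}$ Vandermonde matrix $V_N = (v_i^\alpha)$ has full row rank $N$, i.e., that its rows are linearly independent. Equivalently, I want to show that if $c_1,\ldots,c_N \in \mathbb{R}$ satisfy $\sum_{i=1}^N c_i v_i^\alpha = 0$ for every $\alpha \in \mathbb{Z}_+^2$ with $|\alpha| \le N-1$, then $c_1 = \cdots = c_N = 0$. The key point is that the number of monomials of degree at most $N-1$ in two variables, namely $\binom{N+1}{2} = \frac{N(N+1)}{2}$, is at least $N$ whenever $N \ge 1$, so there is in principle enough room; the work is to exhibit, for each index $j$, a polynomial in $\mathcal{P}_{N-1}$ that separates $v_j$ from the other points.

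First I would invoke the fact that $N$ distinct points in $\mathbb{R}^2$ admit separating polynomials of low degree: for each $j \in \{1,\ldots,N\}$ there exists $\ell_j \in \mathbb{R}[x,y]$ with $\deg \ell_j \le N-1$ such that $\ell_j(v_j) \ne 0$ and $\ell_j(v_i) = 0$ for all $i \ne j$. To build $\ell_j$, for each $i \ne j$ choose an affine linear form $L_{i}(x,y)$ (degree $1$) vanishing at $v_i$ but not at $v_j$ — such a form exists since $v_i \ne v_j$, as one may take a generic line through $v_i$, or explicitly a coordinate-difference when the points differ in that coordinate, and otherwise a suitable linear combination. Then set $\ell_j := \prod_{i \ne j} L_i$, a product of $N-1$ linear forms, so $\deg \ell_j \le N-1$; by construction $\ell_j(v_i) = 0$ for $i \ne j$ and $\ell_j(v_j) = \prod_{i\ne j} L_i(v_j) \ne 0$.

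Next, write $\ell_j = \sum_{|\alpha| \le N-1} a_\alpha^{(j)} x^{\alpha_1} y^{\alpha_2}$ and apply the hypothesized dependence relation: $0 = \sum_{|\alpha|\le N-1} a_\alpha^{(j)} \big(\sum_{i=1}^N c_i v_i^\alpha\big) = \sum_{i=1}^N c_i \ell_j(v_i) = c_j\, \ell_j(v_j)$. Since $\ell_j(v_j) \ne 0$, this forces $c_j = 0$. As $j$ was arbitrary, all $c_j$ vanish, so the rows of $V_N$ are linearly independent and $\rank V_N = N$ (the rank cannot exceed $N$ since $V_N$ has only $N$ rows).

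The main obstacle is purely bookkeeping: verifying that for any two distinct points in $\mathbb{R}^2$ one can select a single affine linear form vanishing at one but not the other, and then confirming that the resulting product $\ell_j$ has degree at most $N-1$ (it is a product of exactly $N-1$ linear factors, so this is immediate). There is no genuine difficulty; the degree count $N-1$ is precisely what makes the separating polynomials land inside $\mathcal{P}_{N-1}$, which is the content of the lemma. One should also note the trivial base case $N=1$, where $V_1 = (1)$ after the convention that $v_1^{(0,0)} = 1$, so $\rank V_1 = 1$.
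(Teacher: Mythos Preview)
Your proof is correct. Note, however, that the paper does not supply its own proof of this lemma: it is quoted directly from \cite[p.~67]{Sau} and used as a black box. Your argument---constructing for each $v_j$ a separating polynomial $\ell_j \in \mathcal{P}_{N-1}$ as a product of $N-1$ affine linear forms, each vanishing at one of the other points---is the standard elementary proof and is entirely self-contained, whereas the paper simply appeals to the literature. One minor stylistic point: rather than saying ``a generic line through $v_i$'' you could be explicit and take $L_i(x,y) = \langle (x,y) - v_i,\, v_j - v_i \rangle$, which clearly vanishes at $v_i$ and takes the value $\|v_j - v_i\|^2 \ne 0$ at $v_j$; this removes any lingering vagueness about the existence of $L_i$.
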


\begin{corollary} \label{vander} \ Let ${\bf x} \equiv \{ x_1,\ldots,x_m \} $ and ${\bf y} \equiv \{y_1,\ldots,y_n\}$ 
be sets of distinct real numbers, and consider the 
grid ${\bf x} \times {\bf y} := \{(x_i,y_j)\}_{1 \leq i \leq m, 1 \leq j \leq n}$ consisting of $N:=mn$ distinct points in 
$\mathbb{R}^2$. \ Then the generalized Vandermonde matrix $V_{{\bf x} \times {\bf y}}$, obtained from $V_{N}$ by removing all columns indexed 
by monomials divisible by $x^m$ or $y^n$, is invertible.
\end{corollary}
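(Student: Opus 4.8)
The plan is to show first that exactly $N=mn$ columns survive the deletion, so that $V_{\mathbf{x}\times\mathbf{y}}$ is square, and then to prove that its column space is all of $\mathrm{Col}\,V_N$, which has dimension $N$ by Sauer's Lemma (Lemma \ref{Sauer}); since an $N\times N$ matrix whose columns span an $N$-dimensional space is invertible, this finishes the argument.

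For the column count, observe that a monomial $x^ay^b$ is \emph{not} divisible by $x^m$ or $y^n$ precisely when $a\le m-1$ and $b\le n-1$, and for such a monomial $a+b\le m+n-2\le mn-1=N-1$ (since $mn-1-(m+n-2)=(m-1)(n-1)\ge 0$), so it does index a column of $V_N$. Hence the surviving columns are indexed by exactly the monomials $\{x^ay^b:0\le a\le m-1,\ 0\le b\le n-1\}$, a set of cardinality $mn=N$, and $V_{\mathbf{x}\times\mathbf{y}}$ is an $N\times N$ submatrix of $V_N$.

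Next I would set $P(x,y):=\prod_{i=1}^m(x-x_i)$ and $Q(x,y):=\prod_{j=1}^n(y-y_j)$ and fix the lexicographic monomial order with $x>y$, for which $\mathrm{LT}(P)=x^m$ and $\mathrm{LT}(Q)=y^n$; note that $P$ and $Q$ vanish at every point of $\mathbf{x}\times\mathbf{y}$. For any monomial $x^cy^e$ indexing a column of $V_N$, Lemma \ref{divisionalg} applied with $F=(P,Q)$ yields $x^cy^e=AP+BQ+r$, where $r$ is an $\mathbb{R}$-linear combination of monomials none of which is divisible by $x^m$ or $y^n$ — that is, a linear combination of the surviving monomials. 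Evaluating at a grid point annihilates $AP$ and $BQ$, so the column of $V_N$ indexed by $x^cy^e$ (the vector of values of $x^cy^e$ on $\mathbf{x}\times\mathbf{y}$) coincides with the vector of values of $r$, and therefore lies in the span of the columns of $V_{\mathbf{x}\times\mathbf{y}}$. Thus $\mathrm{Col}\,V_{\mathbf{x}\times\mathbf{y}}=\mathrm{Col}\,V_N$ (the reverse inclusion being obvious), which has dimension $N$ by Lemma \ref{Sauer}, and the conclusion follows.

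There is no real obstacle, only two bookkeeping points to get right: that each surviving monomial genuinely occurs as a column of $V_N$ (the degree estimate above), and that the monomial order is chosen so that $\mathrm{LT}(P)=x^m$ and $\mathrm{LT}(Q)=y^n$, so that Lemma \ref{divisionalg} delivers a remainder supported precisely on the surviving monomials. One can also give a one-line alternative avoiding Lemmas \ref{divisionalg}--\ref{Sauer}: ordering the rows of $V_{\mathbf{x}\times\mathbf{y}}$ by the grid points lexicographically and the columns compatibly exhibits it as the Kronecker product of the $m\times m$ Vandermonde matrix in $x_1,\dots,x_m$ and the $n\times n$ Vandermonde matrix in $y_1,\dots,y_n$, both invertible since the nodes are distinct, and a Kronecker product of invertible matrices is invertible; but the first argument is the one that fits the development here and reuses Lemmas \ref{divisionalg} and \ref{Sauer} directly.
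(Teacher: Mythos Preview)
Your proposal is correct and follows essentially the same strategy as the paper: count the surviving columns as $N$, show (via the vanishing of $P$ and $Q$ on the grid) that every column of $V_N$ lies in the span of the surviving ones, and invoke Sauer's Lemma. Your column count via the direct characterization $0\le a\le m-1$, $0\le b\le n-1$ (with the degree check $(m-1)(n-1)\ge 0$) and your explicit appeal to the Division Algorithm are slightly tidier than the paper's degree-by-degree enumeration and iterative reduction, but the argument is the same in substance.
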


\begin{proof}
The columns of $V_{N}$ are indexed by the monomials in $x$ and $y$ of degree at most $N$, listed in degree-lexicographic order. \ 
The size of $V_{N}$ is $N \times \frac{N(N+1)}{2}$, and by Lemma \ref{Sauer} we know that its rank is $N$. \ 
We will show that $V_{{\bf x} \times {\bf y}}$ has exactly $N$ columns, and that each column that was removed from $V_{N}$ to 
produce $V_{{\bf x} \times {\bf y}}$ is a linear combination of other columns in $V_{N}$. \ 
Toward the first assertion, assume without loss of generality that $m \leq n$, let $k:=n-m$ (so that $m+k=n$), and 
observe that the columns of $V_{{\bf x} \times {\bf y}}$ are 
indexed by the following monomials: 

\begin{eqnarray*}
1,\\
x,y,\\
x^2,xy,y^2,\\
\cdots,x^{m-1},\cdots,y^{m-1}, \\
x^{m-1}y,\cdots,xy^{m-1},y^m, \\
x^{m-1}y^2,\cdots,xy^{m},y^{m+1}, \\
x^{m-1}y^3,\cdots,xy^{m+1},y^{m+2}, \\
\cdots, \\
x^{m-1}y^k, \cdots, xy^{m+k-2},y^{m+k-1}, \\
x^{m-1}y^{k+1},\cdots,xy^{m+k-2} \\
\cdots, \\
x^{m-1}y^{n-1}. 
\end{eqnarray*}
The number of monomials is then $(1+2+\cdots+m)+mk+[(m-1)+(m-2)+\cdots+2+1]=\frac{m(m+1)}{2}+mk+\frac{(m-1)m}{2}
=m^2+mk=m(m+k)=mn$. \ It follows that $V_{{\bf x} \times {\bf y}}$ has exactly $N \equiv mn$ columns.

To prove the second assertion, observe that the polynomials $P:=(x-x_1)\cdots (x-x_m)$ and $Q:=(y-y_1)\cdots (y-y_n)$ vanish identically on 
${\bf x} \times {\bf y}$, and therefore the columns of $V_{N}$ indexed by multiples of $x^m$ or $y^n$ are linear combinations of 
columns preceding them in degree-lexicographic order.

By combining the preceding two assertions, it follows that $V_{\bf{x} \times \bf{y}}$, having size $N$ and rank $N$, must be invertible.
\end{proof}

The following result is a special case of Alon's Combinatorial Nullstellensatz \cite{A}; for completeness, we give a proof based on
Corollary \ref{vander}.

\begin{corollary} \label{md-1}  
Let $G \equiv {\bf x} \times {\bf y}$ be a grid as in Corollary \ref{vander}, let $N:=mn$, and let $p \in \mathbb{R}[x,y]$ be such that 
$\text{deg}_x~p<m$ and $\text{deg}_y~p<n$. \ Assume also that $p|_G \equiv 0$. \ Then $p \equiv 0$.
\end{corollary}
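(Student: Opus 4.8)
The plan is to reduce the statement to the invertibility of the generalized Vandermonde matrix $V_{{\bf x} \times {\bf y}}$ established in Corollary \ref{vander}. Since $\deg_x p < m$ and $\deg_y p < n$, every monomial appearing in $p$ is of the form $x^a y^b$ with $0 \le a \le m-1$ and $0 \le b \le n-1$; there are exactly $mn = N$ such monomials. First I would check that this set of monomials is precisely the set indexing the columns of $V_{{\bf x} \times {\bf y}}$: the columns of $V_N$ are indexed by the monomials of degree at most $N-1$, and $V_{{\bf x} \times {\bf y}}$ is obtained by deleting those divisible by $x^m$ or $y^n$, i.e., those $x^a y^b$ with $a \ge m$ or $b \ge n$. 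A monomial with $a \le m-1$ and $b \le n-1$ has degree $a+b \le m+n-2 \le mn-1 = N-1$ (the inequality $m+n-2 \le mn-1$ being equivalent to $(m-1)(n-1)\ge 0$), so no such monomial is lost to the degree restriction. Hence the columns of $V_{{\bf x} \times {\bf y}}$ are indexed exactly by $\{x^a y^b : 0 \le a \le m-1,\ 0 \le b \le n-1\}$, consistently with the column count $N$ verified in Corollary \ref{vander}.

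Next I would set up the linear system. Write $p = \sum_{0 \le a \le m-1,\, 0 \le b \le n-1} c_{ab}\, x^a y^b$ and let $\hat p$ denote the corresponding coefficient vector, listed in degree-lexicographic order so as to match the column indexing of $V_{{\bf x} \times {\bf y}}$. For each grid point $(x_i, y_j) \in G$, the condition $p(x_i,y_j)=0$ is exactly the scalar equation obtained by pairing $\hat p$ with the row of $V_{{\bf x} \times {\bf y}}$ indexed by $(x_i,y_j)$ (whose entries are the values $x_i^a y_j^b$). Thus $p|_G \equiv 0$ is equivalent to $V_{{\bf x} \times {\bf y}}\, \hat p = 0$. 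By Corollary \ref{vander}, $V_{{\bf x} \times {\bf y}}$ is invertible, so $\hat p = 0$, i.e., $p \equiv 0$.

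In this argument there is no real obstacle: once Corollary \ref{vander} is in place, the result is immediate. The only point requiring a moment's care is the bookkeeping in the first step — verifying that the degree cutoff $|\alpha| \le N-1$ present in Lemma \ref{Sauer} (and hence in Corollary \ref{vander}) does not exclude any monomial with $\deg_x < m$ and $\deg_y < n$ — which amounts to the elementary inequality $(m-1)(n-1)\ge 0$. Everything else is a direct translation between polynomial evaluation on the grid and a homogeneous linear system with invertible coefficient matrix.
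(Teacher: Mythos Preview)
Your proof is correct and follows essentially the same route as the paper: reduce to the homogeneous system $V_{{\bf x} \times {\bf y}}\hat p = 0$ and invoke the invertibility from Corollary \ref{vander}. You are in fact slightly more careful than the paper, which simply says $\hat p$ can be ``properly extended with zeros'' without explicitly verifying the degree inequality $(m-1)(n-1)\ge 0$ that guarantees the monomial sets match.
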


\begin{proof}
We wish to apply Corollary \ref{vander}. \ From the hypotheses, it is straightforward to verify that $p$ does not contain any 
monomials divisible by $x^m$ or $y^n$, so $\hat{p}$, properly extended with zeros 
to indicate the absence of relevant monomials, can be regarded as a vector in $\mathbb{R}^N$, the domain of the 
generalized Vandermonde matrix $V_{G}$ in Corollary \ref{vander}. \ Since, by assumption, $p(x_i,y_j)=0$ for all $1 \le i \le m$ and $1 \le j \le n$, 
it follows that $V_{G}\hat{p}=0$. \ Since $V_{G}$ is invertible (by Corollary \ref{vander}), we must have $\hat{p}=0$, so $p \equiv 0$, as desired.
\end{proof}

\begin{prop} \label{prop 213} \ Let $P(x,y) := (x-x_{1})\cdots (x-x_{d})$ and let $Q(x,y):=(y-y_{1})\cdots (y-y_{d})$. \ 
If $\rho := \text{multideg}~(f) \geq d$ and $f| \mathcal V((P,Q)) \equiv 0$, then there exists $u,v \in \mathcal{P}_{\rho-d}$ such that $f=uP+vQ$.
\end{prop}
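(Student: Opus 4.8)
The plan is to run the Division Algorithm (Lemma \ref{divisionalg}) twice, once with respect to $P$ and once with respect to $Q$, in a way that controls degrees, and then invoke Corollary \ref{md-1} to kill the remainder. First I would fix the degree-lexicographic monomial order and apply Lemma \ref{divisionalg} to divide $f$ by the one-element tuple $(P)$: since the leading term of $P$ (as a polynomial in $x$ with coefficients in $\mathbb{R}[y]$) is $x^d$, I get $f = uP + g$ where no monomial of $g$ is divisible by $x^d$, i.e. $\text{deg}_x~g < d$; moreover the furthermore clause of Lemma \ref{divisionalg} gives $\text{multideg}~(uP) \le \text{multideg}~(f) = \rho$, hence $\text{deg}~u \le \rho - d$ (the leading term of $u$ times $x^d$ must have multidegree at most $\rho$), and also $\text{deg}~g \le \rho$. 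Next I would divide $g$ by the tuple $(Q)$, whose leading term is $y^d$: this yields $g = vQ + r$ with $\text{deg}_y~r < d$ and, again by the degree-bound in Lemma \ref{divisionalg}, $\text{deg}~v \le \rho - d$ and $\text{deg}~r \le \rho$.

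The key point to check after this is that the second division does not reintroduce high powers of $x$ into the remainder $r$. This requires a small argument: because $Q = Q(y)$ involves only the variable $y$, dividing by $Q$ only rewrites powers of $y$ and never increases the $x$-degree of any term; concretely, at each reduction step one subtracts a multiple of $Q$ that cancels a term $c\, x^a y^b$ with $b \ge d$, replacing it by terms $x^a y^{b'}$ with $b' < b$, so the exponent of $x$ on every monomial produced is one that already appeared in $g$. Hence $\text{deg}_x~r \le \text{deg}_x~g < d$, and we still have $\text{deg}_y~r < d$ from the division. So $r$ is a polynomial with $\text{deg}_x~r < d$ and $\text{deg}_y~r < d$.

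Now assemble the pieces: $f = uP + g = uP + vQ + r$, so $r = f - uP - vQ$. Since $f$ vanishes on $\mathcal{V}((P,Q)) = \mathcal{Z}_{P,Q}$ (the grid $\{(x_i,y_j)\}_{1\le i,j\le d}$) and $P, Q$ vanish there identically, $r$ also vanishes on this grid. Applying Corollary \ref{md-1} with $m = n = d$ and $G = \mathcal{Z}_{P,Q}$ forces $r \equiv 0$, so $f = uP + vQ$ with $u, v \in \mathcal{P}_{\rho - d}$, as claimed.

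The main obstacle I anticipate is the bookkeeping in the two degree estimates — verifying that Lemma \ref{divisionalg}'s \text{multideg} bound on $a_i f_i$ really translates into $\text{deg}~u, \text{deg}~v \le \rho - d$ rather than something weaker, and verifying cleanly that the $Q$-division leaves $\text{deg}_x$ untouched. Neither is deep: the first is immediate once one notes the leading term of $uP$ is (leading term of $u$)$\cdot x^d$ and has multidegree at most that of $f$; the second follows from $Q \in \mathbb{R}[y]$. One mild subtlety to state carefully is the hypothesis $\rho \ge d$, which is exactly what guarantees $\rho - d \ge 0$ so that $\mathcal{P}_{\rho-d}$ is nonempty and the degree claims for $u, v$ are meaningful; in the edge case where $f$ itself already has $\text{deg}_x~f < d$ and $\text{deg}_y~f < d$, the two divisions are trivial ($u = v = 0$), $r = f$, and Corollary \ref{md-1} gives $f \equiv 0$ directly, consistent with the statement.
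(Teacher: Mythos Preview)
Your proof is correct and follows essentially the same approach as the paper: apply the Division Algorithm, then invoke Corollary \ref{md-1} to kill the remainder. The only difference is that the paper applies Lemma \ref{divisionalg} once with the ordered pair $F = (P, Q)$, which directly yields a remainder $r$ none of whose monomials is divisible by $x^d$ or by $y^d$, thereby avoiding your two-step division and the auxiliary argument that dividing by $Q \in \mathbb{R}[y]$ preserves $\text{deg}_x$.
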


\begin{proof} 
Let $\mathcal{V} := \mathcal{V}((P,Q))$. \ By Lemma \ref{divisionalg}, we can write $f=uP+vQ+r$, where $\text{multideg}~ (uP)\leq \rho$ and $\text{multideg}~ (vQ) \leq \rho$. \ It follows that $u,v \in \mathcal{P}_{\rho-d}$ and that $r| \mathcal{V} \equiv 0$. \ Moreover, $r$ is a linear combination, with coefficients in $\mathbb{R}$, of
monomials, none of which is divisible by any of the leading terms in $P$ and $Q$, that is, they are not divisible by $x^d$ and $y^d$. \ Therefore, 
$r$ satisfies the hypotheses of Corollary \ref{md-1} with $m=n=d$. \ By Corollary \ref{md-1}, $r \equiv 0$. \ Thus, $f=uP+vQ$, as desired.
\end{proof}

\begin{proof}[Proof of Theorem \ref{gridthm}]
At several points of the proof we will use the fact that if a moment matrix $M_{k}$ admits a representing measure $\nu$ and $f \in \mathcal{P}_{k}$,
then $f |_{\text{supp}~ \nu} \equiv 0$ if and only if $f(X,Y)=0$ in $\mathcal{C}_{M_{k}}$ \cite[Proposition 3.1]{tcmp1}. \ Let 
$x_1,\ldots,x_d$ and $y_1,\ldots,y_d$ be sets of distinct real numbers, and let $G:={\bf x} \times {\bf y} \equiv {(x_i,y_j)}_{1 \le i,j \le d}$ 
denote the corresponding grid. \ Let $\mu$ denote a measure whose support is precisely equal to $G$ and let $M_{d}:= M_{d}[\mu]$. \ 
Let $P(x,y) := (x-x_{1})\cdots (x-x_{d})$ and let $Q(x,y):=(y-y_{1})\cdots (y-y_{d})$. \ Since $P |_G \equiv 0$ and $Q |_G \equiv 0$, then $P(X,Y)=0$ and $Q(X,Y)=0$ in $\mathcal{C}_{M_d}$, whence
$X^d=p(X)$ and $Y^d=q(Y)$ for certain $p,q \in \mathcal{P}_{d-1}$ satisfying $P(x,y) \equiv x^d-p(x)$ and $Q(x,y) \equiv y^d-q(y)$; 
thus, $M_d$ is recursively determinate. 
\ We first show that the only column dependence relations in $M_d$ arise from the above relations 
via linearity, so that $M_d$ falls within the scope of Theorem \ref{rdext}. \ If $\text{deg} ~ f=d$ and $f(X,Y)=0$ in $Col ~ M_d$, 
then $f |_G \equiv 0$, so Proposition \ref{prop 213} implies that there exists scalars $u$ and $v$ such that $f=uP+vQ$. \ Thus, 
$f(X,Y)=uP(X,Y)+vQ(X,Y)$. \ Further, if $\text{deg}~ f<d$ and $f(X,Y)=0$, then since $f|_{G} \equiv 0$, 
it follows from Corollary \ref{md-1} that $f \equiv 0$ (whence $M_{d-1} \succ 0)$. \ Thus, $M_d$ satisfies the
 conditions of Theorem \ref{rdext}. \ 
 
 Since $M_{d}$ has the finitely atomic representing measure $\mu$, $M_{d}$ admits successive positive, recursively generated extensions 
 $M_{d+1}[\mu], M_{d+2}[\mu],\ldots$, so clearly these are the unique successive positive, recursively determined extensions of $M_{d}$; let 
 $M_{d+k} := M_{d+k}[\mu] ~ (1 \le k \le d-1)$. \ We seek to show that each of $M_{d+1},\ldots,M_{2d-1}$ falls within the scope of Theorem \ref{rdext} and that the first flat extension in this sequence occurs with $\text{rank}~M_{2d-1}= \text{rank}~M_{2d-2}$. \ 
We first give a concrete description of $ker ~ M_{d+k}$. \ Since $M_{d-1} \succ 0$, if $r \in \mathcal{P}_{d+k}$ with $\hat{r} \in 
ker ~ M_{d+k}$, then $\text{deg} ~ r=d+j$ for $0 \le j \le k$. \ Since $\mu$ is a representing measure for $M_{d+k}$, it follows that $r|\text{supp}~\mu \equiv 0$. \ Proposition \ref{prop 213} now implies that there exist $u,v\in \mathcal{P}_{j}$ such that $r=uP + vQ$ (with $P$ and $Q$ defined above in the description of $\mu$). \ Thus $ker~M_{d+k}$ is indexed by the recursively determined columns; precisely, $ker~M_{d+k}$ is the span of all of  the columns $\widehat{x^{s}y^{t}(x^d-p)}$ and
 $\widehat{x^{s}y^{t}(y^d-q)}$ ($s,t\ge0$, $s+t\le k$). \ Thus, $M_{d+k}$ satisfies the conditions of Theorem \ref{rdext}. \ 
 In passing from $M_{d+k-1}$ to $M_{d+k}$ there are $d+k+1$ new columns, of which $2(k+1)$ are recursively determined, and since these correspond (as just above) to elements of $ker~M_{d+k}$, we have
$\text{rank}~M_{d+k}= \text{rank}~M_{d+k-1} + (d+k+1)-2(k+1)= \text{rank}~M_{d+k-1}+ d-k-1$. \ Thus the first flat extension occurs when $k=d-1$, in passing from $M_{2d-2}$ to $M_{2d-1}$.
\end{proof} 

We continue with an example which shows that Theorem \ref{main}
is no longer valid if we permit column dependence relations in $M_{d}$ in addition
to those in (\ref{Xrec}) - (\ref{Yrec}).
\begin{example}\label{notRDexample}
We define $M_{3}$
 by setting $\beta_{00}= \beta_{20}=\beta_{02} = 1$;
$\beta_{11}= \beta_{30}=\beta_{21}=\beta_{03}=0$;
$\beta_{12}=\beta_{40}=2$; $\beta_{31}=\beta_{13}=0$;
$\beta_{22}=5$, $\beta_{04}=22$; 
$\beta_{50}=-1$, $\beta_{41}=-2$, $\beta_{32}=13$,
$\beta_{23}=3$, $\beta_{14}=\frac{894}{13}$,
$\beta_{05}= \frac{336}{13}$;
$\beta_{60}=178$, $\beta_{51}=139$, $\beta_{42}=159$,
$\beta_{33}= \frac{1657}{13}$, $\beta_{24}= \frac{4298}{13}$,
$\beta_{15}=r$, $\beta_{06}= \gamma
\equiv \frac{443272376768-2742712830r-4826809r^{2}}{41327767}$.
Thus, we have
\begin{equation}\label{m3ex}
M_{3} =
\bpm
1  &  0 & 0 & 1 & 0 & 1 & 0 & 0 & 2 & 0   \\
0  &  1 & 0 & 0 & 0 & 2 & 2 & 0 & 5 & 0   \\
0  &  0 & 1 & 0 & 2 & 0 & 0 & 5 & 0 & 22   \\
1  &  0 & 0 & 2 & 0 & 5 & -1 & -2 & 13 & 3   \\
0  &  0 & 2 & 0 & 5 & 0 & -2 & 13 & 3 & \frac{894}{13}   \\
1  &  2 & 0 & 5 & 0 & 22 & 13 & 3 &     \frac{894}{13}  &    \frac{336}{13}    \\
0  &  2 & 0 & -1 & -2 & 13 & 178 & 139 & 159 &      \frac{1657}{13}   \\
0  &  0 & 5 & -2 & 13 & 3 & 139 & 159 &  \frac{1657}{13} &\frac{4298}{13}\\                      
2  &  5 & 0 & 13 & 3 & \frac{894}{13} & 159 & \frac{1657}{13} & \frac{4298}{13} &
 r   \\
0  &  0 & 22 & 3 &  \frac{894}{13}&  \frac{336}{13} & \frac{1657}{13}   
                  & \frac{4298}{13} & r & \gamma    
\epm.
\end{equation}
It is straightforward to check that $M_{3}$ is positive, recursively generated, and recursively determinate, with $M_{2}\succ 0$,
$\text{rank}~M_{3} = 7$ and column
dependence relations
\begin{equation}\label{X3}
X^{3}= p(X,Y):= 40\cdot 1 -24X+4Y-53X^{2}-2XY+13Y^{2},
\end{equation}
\begin{equation}\label{X2Y}
X^{2}Y = t(X,Y):= 35\cdot 1 -22X-Y-46X^{2}+3XY+11Y^{2},
\end{equation}
and
\begin{equation}\label{Y3}
Y^{3}= q(X,Y):= d_{1}\cdot 1+ d_{2} X + d_{3}Y+ d_{4}X^{2}+
d_{5}XY+d_{6}Y^{2}+d_{7}XY^{2},
\end{equation}
where
$d_{1}= \frac{3(487658-1651r)}{1447}$,
$d_{2}= \frac{3(-342075+1157r)}{1447}$,
$d_{3}= \frac{2(-2131598+6591r)}{18811}$,
$d_{4}= \frac{-2000094+6773r}{1447}$,
$d_{5}= \frac{2338519-6591r}{18811}$,
$d_{6}= \frac{2(-316575+1079r)}{1447}$,
$d_{7}= \frac{-48015+169r}{1447}$.
Thus, $M_{3}$ satisfies all of the hypotheses of Theorem \ref{main},
except that (\ref{X2Y}) is an ``extra" dependence relation
(not a linear combination of the relations defined in (\ref{X3})
and (\ref{Y3}). \ We claim that $M_{3}$ does not admit a
moment matrix extension block $B(4)$ such that 
$\bpm
M_{3} & B(4)
\epm $
is recursively generated.
Indeed, if such a block existed, then in the column space of
$\bpm
M_{3} & B(4)
\epm $
we would have
$X^{3}Y = (yp)(XY) :=40Y -24XY+4Y^{2}-53X^{2}Y-2XY^{2}+13Y^{3}$
and also
$X^{3}Y = (xt)(X,Y):= 35X -22X^{2}-XY-46X^{3}+3X^{2}Y+11XY^{2}$.
A calculation shows that 
$\langle (yp)(X,Y)-(xt)(X,Y),XY^{2}\rangle=
\frac{-49462+169r}{13}$, so for $r\not = \frac{49462}{169}$,
$X^{3}Y$ is not well-defined. \ Thus, the conclusions of
Theorem \ref{main} do not hold for $M_{3}$ (and thus there is no representing measure). \qed

\end{example}

By contrast with the preceding example, we next show
that if $M_{d} \in RD$, with all column dependence relations
of strictly lower degree, then $M_{d}$ does admit an $RG$
extension.
\begin{theorem} \label{RDnew}
Suppose $M_{d}$ is positive and recursively generated,
and satisfies (\ref{Xrec})-(\ref{Yrec}). \ If each column relation  in $M_{d}$  can be expressed as
$X^{i}Y^{j}=r(X,Y)$ with $deg~r<i+j$, then $M_{d}$
admits a unique $RG$ extension.
\end{theorem}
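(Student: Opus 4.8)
The plan is to follow the two-stage template of Theorems~\ref{main}--\ref{rdext} and Corollary~\ref{Cblock}: first produce $B(d+1)$, then $C(d+1)$, then assemble $M_{d+1}$, the point being that the degree-reducing hypothesis forces $\deg q\le m-1$ (while $\deg p\le n-1$ is built into (\ref{X})), which eliminates the delicate feature of (\ref{Y^m_rec}). I would first dispose of the trivial case: if $n+m\le d+1$ then every degree-$d$ column of $M_d$ is, via (\ref{Xrec})--(\ref{Yrec}), a combination of columns of strictly lower degree, so $M_d$ is flat and the conclusion is immediate from~\cite{tcmp1}; hence I may assume (\ref{n+m}). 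The key preliminary is a \emph{propagation lemma}: for each column relation $X^aY^b=r(X,Y)$ of $M_d$ with $\deg r<a+b$ one has $\beta_{i+a,\,j+b}=L_\beta(x^iy^jr)$ for all $i,j\ge 0$ with $i+j+a+b\le 2d$. This is proved by splitting $x^iy^j=(x^{i-c}y^{j-e})(x^cy^e)$ with $i-c+j-e\le d$ and $(c+e)+(a+b)\le d$ and applying recursiveness to the genuine $M_d$-column relation $X^{a+c}Y^{b+e}=(x^cy^er)(X,Y)$; strictness of $\deg r<a+b$ is exactly what keeps all intermediate polynomials in $\mathcal{P}_d$.

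To build $B(d+1)$ I would define the left band $X^{n+f}Y^{d+1-n-f}:=(x^fy^{d+1-n-f}p)(X,Y)$ and the right band $X^{d+1-m-g}Y^{m+g}:=(x^{d+1-m-g}y^gq)(X,Y)$. Since $\deg p\le n-1$ and $\deg q\le m-1$, every monomial occurring in either product has total degree $\le d$, so each band column is an honest linear combination of columns of $M_d$ (in particular the right band no longer refers to undefined central-band columns). A short counting argument using $n,m\le d$ shows that every new moment of degree $2d+1$ already occurs among the entries of these band columns (the index pairs that could be missed would force $n+m>2d+2$), so the central-band columns $X^aY^{d+1-a}$ $(d+2-m\le a\le n-1)$ are completely determined by the Hankel property, with all their entries among moments already defined. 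The value a left-band column assigns to $\beta_{k,l}$ $(k+l=2d+1)$ is always $L_\beta(x^{k-n}y^{l}p)$, the value a right-band column assigns is always $L_\beta(x^{k}y^{l-m}q)$, and the propagation lemma (together with $\deg p\le n-1$, $\deg q\le m-1$) shows both equal $L_\beta(x^{k-n}y^{l-m}pq)$; hence the determinations agree and $B(d+1)$ is a well-defined moment matrix block with $Ran\,B(d+1)\subseteq Ran\,M_d$.

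The main obstacle is to show that $(M_d \; B(d+1))$ is recursively generated, i.e., that the \emph{extra} column relations of $M_d$ propagate consistently. For an extra relation $X^aY^b=r$ $(\deg r<a+b)$ and $c+e=d+1-a-b$, the column $X^{a+c}Y^{b+e}$ of $B(d+1)$ --- whether it lies in the left, right, or central band --- must equal $(x^cy^er)(X,Y)$; the difference is $s(X,Y)$ with $s\in\mathcal{P}_d$, and the propagation lemma gives $L_\beta(x^iy^js)=0$ for all $i+j\le d-1$. When $\deg s\le d-1$ this yields $\langle M_d\hat s,\hat s\rangle=0$, so positivity of $M_d$ forces $s(X,Y)=0$; the remaining degree-$d$ contributions are removed by reducing $s$ modulo the already-established relations of $M_d$ and invoking the symmetry of $M_d$ together with one further application of the propagation lemma. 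This is precisely the place where the hypothesis $\deg q\le m-1$ (and degree-reduction of \emph{all} relations) is indispensable: Example~\ref{notRDexample}, in which $\deg q=m$, shows that otherwise no block $B(d+1)$ consistent with recursiveness exists.

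Finally I would obtain $C(d+1)$ exactly as in Corollary~\ref{Cblock}, repeating the previous two paragraphs with $M_d$ replaced by $B(d+1)^T$ and $B[d,d+1]$ replaced by $C(d+1)=B[d+1,d+1]$; once the range condition of the preceding step is in hand, (\ref{Xrec})--(\ref{Yrec}) and the extra relations hold in the columns of $B(d+1)^T$, every new moment of degree $2d+2$ is again forced by the bands, and the same counting and consistency arguments go through. Assembling, $M_{d+1}$ satisfies (i) and (ii), every dependence relation of $Col\,M_d$ extends to $Col\,M_{d+1}$ (property (iv)), and $M_{d+1}$ is recursively generated, so it is an $RG$ extension; uniqueness is automatic since (\ref{Xrec})--(\ref{Yrec}) force every entry of $B(d+1)$ and $C(d+1)$.
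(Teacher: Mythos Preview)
Your approach differs substantively from the paper's, and your propagation lemma is a clean device for the consistency of the left and right bands in $B(d+1)$. However, there is a genuine gap at the range inclusion $Ran\,B(d+1)\subseteq Ran\,M_d$. You correctly observe that each \emph{band} column is a linear combination of columns of $M_d$ (since $\deg p\le n-1$ and $\deg q\le m-1$), but you then assert the range inclusion for all of $B(d+1)$ without treating the central-band columns $X^aY^{d+1-a}$ $(d+2-m\le a\le n-1)$. These columns are defined entrywise via the Hankel property, not as $t(X,Y)$ for any $t\in\mathcal P_d$, and knowing that each of their entries coincides with an entry of some band column does not place the column itself in $Ran\,M_d$. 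In the paper this is exactly the content of Lemma~\ref{range} (using Lemmas~\ref{recXrows}--\ref{recYrows}), which the proof of Theorem~\ref{RDnew} invokes via the construction in Section~\ref{PROOF}; it is a nontrivial induction on the degree-lexicographic position of the dependent rows, and nothing in your argument substitutes for it. Since property (ii) of an $RG$ extension is precisely this range inclusion, the proposal is incomplete at this point.

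A second, related issue is in your recursive-generation step. You write the difference between the $B(d+1)$-column $X^{a+c}Y^{b+e}$ and $(x^cy^er)(X,Y)$ as $s(X,Y)$ with $s\in\mathcal P_d$; but if $X^{a+c}Y^{b+e}$ lies in the central band, you cannot express it as $t(X,Y)$ until the range condition is in hand, so the argument is circular. Even in the band case, where $s$ is well-defined and your propagation lemma gives $L_\beta(x^iy^js)=0$ for $i+j\le d-1$, the reduction of the degree-$d$ part of $s$ ``modulo the already-established relations'' does not by itself kill the components of $s(X,Y)$ in the \emph{free} degree-$d$ rows (those $X^kY^{d-k}$ in the central band with no relation), and ``one further application of the propagation lemma'' does not reach those rows since $k+(d-k)+\deg s$ may exceed $2d$. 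The paper handles this not with positivity but by direct componentwise computation (the case analysis surrounding (\ref{Xcomponent})--(\ref{recformulaC})), together with Lemma~\ref{longcolumns}, which transports degree-$\le d-1$ relations from $Col\,M_d$ to $Col\,M_{d+1}$ once the range condition is available. Your outline would need either that machinery or a sharper argument controlling the degree-$d$ rows.
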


We present the proof of Theorem \ref{RDnew} in Section \ref{PROOF2}. \ Finally, we note that in applying the algorithm, Theorem 
\ref{rdext} or Theorem \ref{RDnew} may apply at some extension steps, but not at others. \ Consider \cite[Example 4.15]{finitevariety},
which concerns a recursively determinate $M_{5}$ with $n=m=d=5$, $deg~p=5$, $deg~q=4$. \ The moment matrix $M_{5}$ satisfies the 
hypotheses of Theorem \ref{rdext} (with the roles of $p$ and $q$ reversed). \ The $RG$ extension $M_{6}$ is positive semidefinite and
satisfies the hypotheses of Theorem \ref{rdext}. \ The $RG$ extension $M_{7}$ is also positive semidefinite, but has a new column 
relation, $X^3Y^4=r(X,Y) \; (deg~r=6)$, that is not recursively determined from $X^5=p(X,Y)$ or $Y^5=q(X,Y)$. \ Thus, Theorem \ref{rdext}
does not apply to $M_{7}$, nor does Theorem \ref{RDnew} (since $deg~p=5=n$). \ Nevertheless, in this case, when the algorithm is applied 
to $M_{7}$, a flat extension $M_{8}$ (and a measure) results.    


\section{An extension sequence that fails at the second stage}\label{Sect3}

\setcounter{equation}{0}

Recall that
in the most important case of recursive determinacy,
a positive, flat $M_{d}$ admits unique positive,
recursively generated extensions of all orders, $M_{d+1},\ldots
M_{d+k},\ldots$, leading to a unique representing measure. \
Further, in all of the examples of \cite{tcmp3}, \cite{tcmp11} and \cite{finitevariety}, when a positive,
recursively generated,
recursively
determinate $M_{d}$ fails to have a representing measure, it is
because it fails to admit a positive, recursively generated
extension $M_{d+1}$. \ These results suggest the question as to  whether
a positive, recursively generated, recursively determinate $M_{d}$
which admits a positive, recursively generated $M_{d+1}$ necessarily
admits positive,
recursively generated extensions of all orders
(and thus a representing measure) \cite[Question 4.19]{finitevariety}. \
In this section we provide a negative answer to this question. \ 
In the sequel we construct a positive, recursively
generated, recursively determinate
$M_{4}(\beta^{(8)})$ which admits a positive, recursively generated extension
$M_{5}$, but
 such that $M_{5}$
  fails to admit a positive, recursively
generated extension $M_{6}$. \ It then follows
from the Bayer-Teichmann Theorem that $\beta^{(8)}$ has no
representing measure.

We define $M_{4}$ by defining its component blocks in the
decomposition
\begin{equation}\label{m4blocks}
M_{4} =
\bpm
M_{3} & B(4)   \\
B(4)^{T} & C(4) 
\epm.
\end{equation}

We begin by setting $\beta_{00}= \beta_{20}=\beta_{02}= \beta_{22} = 1$,
$\beta_{40}=\beta_{04} = \beta_{42}=\beta_{24}=2$, $\beta_{60}=
\beta_{06} = 5$, and all other moments up to degree 6 set to $0$, so that
\begin{equation}\label{m3}
M_{3} =
\bpm
1  &  0 & 0 & 1 & 0 & 1 & 0 & 0 & 0 & 0   \\
0  &  1 & 0 & 0 & 0 & 0 & 2 & 0 & 1 & 0   \\
0  &  0 & 1 & 0 & 0 & 0 & 0 & 1 & 0 & 2   \\
1  &  0 & 0 & 2 & 0 & 1 & 0 & 0 & 0 & 0   \\
0  &  0 & 0 & 0 & 1 & 0 & 0 & 0 & 0 & 0   \\
1  &  0 & 0 & 1 & 0 & 2 & 0 & 0 & 0 & 0   \\
0  &  2 & 0 & 0 & 0 & 0 & 5 & 0 & 2 & 0   \\
0  &  0 & 1 & 0 & 0 & 0 & 0 & 2 & 0 & 2   \\
0  &  1 & 0 & 0 & 0 & 0 & 2 & 0 & 2 & 0   \\
0  &  0 & 2 & 0 & 0 & 0 & 0 & 2 & 0 & 5   
\epm.
\end{equation}

We next set

\begin{equation}\label{B4}
B(4) =
\bpm
2  &  0 & 1  & 0 & 2    \\
0  &  0 & 0 & 0 & 0   \\
0  &  0 & 0 & 0 & 0   \\
5  &  0 & 2 & 0 & 2   \\
0  &  2 & 0 & 2 & 0   \\
2  &  0 & 2 & 0 & 5   \\
a  &  b & 0 & 0 & 0   \\
b  &  0 & 0 & 0 & 0    \\
0  &  0 & 0 & 0 & g   \\
0  &  0 & 0 & g & h  
\epm,
\end{equation}
where $\beta_{70}=a$, $\beta_{61}=b$, $\beta_{16}=g$, $\beta_{07}=h$,
and all other degree 7 moments equal 0.
Let
\begin{equation}\label{p}
p(x,y):= ax^{3} + b x^{2}y+ 3x^{2}-by-2ax-1
\end{equation}
and
\begin{equation}\label{q}
q(x,y):= gxy^{2}+hy^{3}+ 3y^{2}-2hy-gx-1,
\end{equation}
so that in the column space of
$ \bpm
M_{3} & B(4)  
 \epm $, we have the relations
 \begin{equation}\label{x4}
X^{4}= p(X,Y)
\end{equation}
and
\begin{equation}\label{y4}
Y^{4}=q(X,Y),
\end{equation}
and $\text{rank}~
 \bpm
M_{3} & B(4)  
 \epm = 13$.

We complete the definition of a
recursively determinate $M_{4}$
by extending the relations
(\ref{x4}) and (\ref{y4})
to the columns of 
$
\bpm
B(4)^{T} & C(4) 
\epm,$ leading to
\begin{equation}\label{C4}
C(4) =
\bpm
13+a^{2}+b^{2}  &  ab & 5  & 0 & 4    \\
ab  &  5 & 0 & 4 & 0   \\
5  &  0 & 4 & 0 & 5   \\
0  &  4 & 0 & 5 & gh   \\
4  &  0 & 5 & gh & 13+g^{2}+h^{2}   
\epm.
\end{equation}
Since $M_{3}\succ 0$ (positive and invertible), we see that
$M_{4} \succeq 0$ with rank 13
if and only if
$\Delta(4):= C(4) - B(4)^{T}M_{3}^{-1}B(4)\succ  0$. \ In view of
 (\ref{x4}) and (\ref{y4}), this is equivalent to the
 positivity of the compression of $\Delta(4)$ to rows and
 columns indexed by $X^{3}Y$, $X^{2}Y^{2}$, $XY^{3}$, i.e.,
\begin{equation}\label{Delta4}
[\Delta(4)]_{\{X^{3}Y,X^{2}Y^{2},XY^{4}\}} \equiv
\bpm
1-b^{2}  &  0 & 0  \\
0  &  1 & 0    \\
0  &  0 &   1-g^{2}
\epm \succ 0.
\end{equation}
Thus, if $b$ and $g$ satisfy
$1-b^{2}>0$ and $1-g^{2}>0$, then
$M_{4}$ is positive, recursively generated, and recursively
determinate, with $\text{rank}~M_{4} = 13$, so $M_{4}$
satisfies the hypotheses of Theorem \ref{main}.

We next seek to extend $M_{4}$ to a positive
and recursively generated $M_{5}$.
In view of (\ref{x4}) and (\ref{y4}), this can
only be accomplished by defining
\begin{equation}\label{x5}
X^{5} := (xp)(X,Y)
\end{equation}
and
\begin{equation}\label{y5}
Y^{5} := (yq)(X,Y).
\end{equation}
Theorem \ref{main} implies
that
the resulting
$B(5)$ is well-defined and
satisfies $Ran~B(5)\subseteq Ran~M_{4}$,
so there exists $W$ satisfying
$B(5) = M_{4}W$. \ A calculation now shows
that if we define $C(5)$ via
(\ref{x5}) and (\ref{y5})
(as we must to preserve recursiveness),
 then
$M_{5}\succeq 0$ if and only if
$$\Delta(5)\equiv C(5) -B(5)^{T}W =
\bpm
0 &  0 & 0  & 0 & 0 & 0   \\
0  &  0 & 0 & 0 & 0 & 0  \\
0  &  0 &  \frac{-1+2b^{2}}{-1+b^{2}} & bg & 0 & 0   \\
0  &  0 & bg &  \frac{-1+2g^{2}}{-1+g^{2}} & 0 & 0   \\
0  &  0 & 0 & 0 & 0 & 0 \\  
0  &  0 & 0 & 0 & 0 & 0  
\epm \succeq 0.
$$
Thus,
using
nested determinants, and since  $b^{2}<1$, we see that
 $M_{5}$
 is positive and recursively generated, with
 $\text{rank}~M_{5} = 15$ 
 if and only if
\begin{equation}\label{btest}
b^{2} < \frac{1}{2}
\end{equation}
and
\begin{equation}\label{bgtest}
1-2b^{2}-2g^{2}+3b^{2}g^{2}+b^{4}g^{2}+b^{2}g^{4}-b^{4}g^{4} > 0.
\end{equation}
For example, setting $b=g= \frac{1}{4}$,
the expression in (\ref{bgtest}) equals $\frac{49951}{65536} (>0)$,
so it follows that $M_{5}$ is positive and recursively
generated, with $\text{rank}~M_{5} = 15$, whence $M_{5}$ satisfies
the conditions of Theorem \ref{main}.

With these values for $b$ and $g$ (or using other
appropriate values), we next attempt to define
a positive and recursively generated extension $M_{6}$.
This can only be done by defining $X^{6}:= (x^{2}p)(X,Y)$
and $Y^{6}:= (y^{2}q)(X,Y)$.
Theorem \ref{main} implies that the resulting $B(6)$ is well-defined
and that there is a matrix $V$ such that $B(6) = M_{5}V$.
Further, $C(6)$ is uniquely defined via the
preceding column relations. \ $M_{6}$ as thus defined is
recursively generated (by construction), but we will show that
it need not be positive. \
Indeed, a calculation shows that  $\Delta(6)
\equiv C(6)- B(6)^{T}V$ 
is identically 0 except perhaps for the element
in the row and column indexed by
$X^{3}Y^{3}$ (the row 4, column 4 element), which  is equal to
$$\frac{(1-3b^{2}+b^{4}-ab^{2}g+ab^{4}g+bh-2b^{3}h)
(-1-ag+3g^{2}+2ag^{3}-g^{4}+bg^{2}h-bg^{4}h)}
{-1+2b^{2}+2g^{2}-3b^{2}g^{2}-b^{4}g^{2}-b^{2}g^{4}+b^{4}g^{4}}.$$
Note that the denominator of the preceding expression is the
negative of the expression in (\ref{bgtest}),
and is thus strictly negative.
Thus $M_{6}$ is positive if and only if
\begin{equation}\label{m6test}
\eta :=(1-3b^{2}+b^{4}-ab^{2}g+ab^{4}g+bh-2b^{3}h)
(-1-ag+3g^{2}=2ag^{3}-g^{4}+bg^{2}h-bg^{4}h)\le 0.
\end{equation}
With $b=g=\frac{1}{4}$, we have
$$\eta = \frac{(-836+15a-224 h)(836+224a-15h)}{1048576}.$$
If we choose $a$ and $h$ so that $\eta=0$, then $M_{6}$ is a
flat extension of $M_{5}$, and $\beta \equiv \beta^{(8)}$ has
a $15$-atomic representing measure.
If we choose $a$ and $h$ so that $\eta <0$, then $M_{6}$ is
positive with rank 16, and since, in Corollary \ref{flatext}, $n=m=4$ and
$d=6$, it follows that $M_{6}$ has a flat extension $M_{7}$.
However, if we choose $a$ and $h$ so that $\eta>0$ 
(e.g., with
 $h=0$ and $a>\frac{836}{15}$), then
$M_{6}$ is not positive, whence $\beta$ has no representing measure.

\section{Proof of Theorem \ref{main}} \label{PROOF}

\setcounter{equation}{0}

The proof of Theorem \ref{main} entails two mains steps: (i) the construction of the block $B(d+1)$ from the column relations (\ref{X}) and 
(\ref{Y}) so that $(M_d \; B(d+1))$ is recursively generated; and (ii) the verification that $Ran~B(d+1) \subseteq Ran~M_d$. \ 

{\bf STEP (i)}: \ Step (i) will follow from a series of five auxiliary results (Lemmas \ref{old=new} - \ref{righthankel}). \ 
To begin the formal definition of $B(d+1)$, note that blocks $B[0,d+1],\ldots,B[d-1,d+1]$ are completely defined in terms
of moments in $M_{d}$. \ Indeed, for $0\le i\le d+1$, $0\le j\le d-1$, and $h,k\ge 0$ with $h+k=j$, the component
of $B[j,d+1]$ in row $X^{h}Y^{k}$ and column $X^{i}Y^{d+1-i}$, which we denote by $\langle X^{i}Y^{d+1-i},X^{h}Y^{k}\rangle$,
must equal $\beta_{i+h,d+1-i+k}$. \ Note also that for $i\ge n$,
the above component is alternately defined by (\ref{Xton_rec}),
so we must show that the two definitions agree.

\begin{lemma}\label{old=new}
For $0\le f \le d+1-n$ and $i,j\ge 0$ with $i+j\le d-1$,
the entry in column $X^{n+f}Y^{d+1-n-f}$, row $X^{i}Y^{j}$, as defined
by   (\ref{Xton_rec}),  coincides with the moment inherited
from $M_{d}$ by moment matrix structure, $\beta_{n+f+i,d-n-f+j+1}$.
\end{lemma}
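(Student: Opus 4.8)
The plan is to express both candidate values of the entry --- the one coming from the recursive rule (\ref{Xton_rec}) and the one inherited from moment matrix structure --- as values of the Riesz functional $L_\beta$ on polynomials of degree at most $2d$, and then to recognize their difference as $L_\beta$ evaluated on a polynomial multiple of $x^n-p$, which must vanish by recursive generation.

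First I would rewrite the recursively defined value. By (\ref{Xton_rec}), the entry in column $X^{n+f}Y^{d+1-n-f}$ and row $X^iY^j$ is $\sum_{r,s\ge 0,\,r+s\le n-1}a_{rs}\,\langle X^{r+f}Y^{s+d+1-n-f},X^iY^j\rangle$. Since $r+s\le n-1$, each column $X^{r+f}Y^{s+d+1-n-f}$ has degree $r+s+d+1-n\le d$, and the row $X^iY^j$ has degree $i+j\le d-1$, so each term is a genuine entry of $M_d$, equal to $\beta_{r+f+i,\,s+d+1-n-f+j}=L_\beta(x^{r+f+i}y^{s+d+1-n-f+j})$. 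Summing against the coefficients of $p$ (as in (\ref{X^n})) shows the recursively defined value equals $L_\beta\!\big(x^{f+i}y^{d+1-n-f+j}\,p(x,y)\big)$, equivalently $\langle (x^fy^{d+1-n-f}p)(X,Y),X^iY^j\rangle$, using $\langle M_d\hat{u},\hat{v}\rangle=L_\beta(uv)$ with $x^fy^{d+1-n-f}p\in\mathcal{P}_d$ and $x^iy^j\in\mathcal{P}_d$. On the other hand, the value inherited by moment matrix structure is $\beta_{n+f+i,\,d+1-n-f+j}=L_\beta\!\big(x^{f+i}y^{d+1-n-f+j}\,x^n\big)$, which is a legitimate moment of $\beta^{(2d)}$ precisely because $(n+f+i)+(d+1-n-f+j)=d+1+i+j\le 2d$ --- this is exactly where the hypothesis $i+j\le d-1$ enters. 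Subtracting, the two values differ by $L_\beta\!\big(x^{f+i}y^{d+1-n-f+j}(x^n-p)\big)$.

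For the vanishing, I would set $g:=x^n-p(x,y)\in\mathcal{P}_n$, so that $g(X,Y)=X^n-p(X,Y)=0$ by (\ref{X}). Using $n\le d$, factor $x^{f+i}y^{d+1-n-f+j}=x^{a_1}y^{b_1}\cdot x^{a_2}y^{b_2}$ with $a_1+b_1=d-n$ and $a_2+b_2=i+j+1$; a concrete admissible choice is $a_1:=\min(f+i,\,d-n)$, $b_1:=d-n-a_1$, and the complementary exponents $a_2,b_2$ come out nonnegative because $(f+i)+(d+1-n-f+j)\ge d-n$ and $d+1-n-f+j\ge 0$ (the latter since $f\le d+1-n$). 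Then $x^{a_1}y^{b_1}g\in\mathcal{P}_d$ (degree $(d-n)+n=d$) and $x^{a_2}y^{b_2}\in\mathcal{P}_d$ (degree $i+j+1\le d$), so recursive generation (\ref{recgen}) applied to $g$, $x^{a_1}y^{b_1}$, $x^{a_1}y^{b_1}g$ gives $(x^{a_1}y^{b_1}g)(X,Y)=0$. Hence
$$L_\beta\!\big(x^{f+i}y^{d+1-n-f+j}(x^n-p)\big)=L_\beta\!\big((x^{a_1}y^{b_1}g)(x^{a_2}y^{b_2})\big)=\langle (x^{a_1}y^{b_1}g)(X,Y),\,X^{a_2}Y^{b_2}\rangle=0,$$
so the recursive and inherited values coincide, as claimed.

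The step I expect to be the \emph{crux} --- and the reason the lemma is not simply the column relation $X^n=p(X,Y)$ read off in one row --- is this last factorization: the row one would naively pair $p(X,Y)$ against, namely $X^{f+i}Y^{d+1-n-f+j}$, has degree $i+j+d+1-n$, which in general exceeds $d$, so it is not a row of $M_d$ at all; splitting off a degree-$(d-n)$ monomial is precisely the device that brings the computation back inside $\mathcal{P}_d$, where recursive generation is available. The remaining work is only bookkeeping of degrees to ensure every polynomial fed to $L_\beta$ has degree $\le 2d$ and every polynomial entering a pairing $\langle M_d\hat{u},\hat{v}\rangle$ has degree $\le d$; all of these inequalities follow from $i+j\le d-1$, $0\le f\le d+1-n$, and $n\le d$.
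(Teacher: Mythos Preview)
Your proof is correct, but it takes a genuinely different route from the paper's. The paper argues by a single index shift: for $f\le d-n$ it uses that $s+d+1-n-f\ge 1$ to move one factor of $y$ from the column index to the row index (legitimate since $i+j\le d-1$), obtaining $\sum a_{rs}\langle X^{r+f}Y^{s+d-n-f},X^iY^{j+1}\rangle$, which by the known column relation (\ref{Xrec}) in $M_d$ collapses to $\langle X^{n+f}Y^{d-n-f},X^iY^{j+1}\rangle=\beta_{n+f+i,d-n-f+j+1}$; the boundary case $f=d+1-n$ (column $X^{d+1}$) is handled by shifting an $x$ instead of a $y$. Your argument instead passes to the Riesz functional, writes the difference of the two candidate values as $L_\beta$ applied to a polynomial multiple of $x^n-p$, and then factors that multiple so that recursive generation in $M_d$ kills it. Your approach is a bit more conceptual and has the advantage of treating all $f$ uniformly via the $(a_1,b_1)/(a_2,b_2)$ split, whereas the paper's shift argument is slightly more elementary but requires the case distinction on whether a $y$-shift or an $x$-shift is available.
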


\begin{proof}  Consider first the case when $d-n-f\ge 0$.
From (\ref{Xton_rec}), we have
\begin{eqnarray*}
X^{n+f}Y^{d+1-n-f} &:=& (x^{f}y^{d+1-n-f}p)(X,Y) \\
&\equiv& \sum \limits_{r,s\ge 0, r+s\le n-1}^{} a_{rs}X^{r+f}Y^{s+d+1-n-f} \quad (0\le f\le d+1-n),
\end{eqnarray*}
so
\begin{eqnarray*}
\langle X^{n+f}Y^{d+1-n-f},X^{i}Y^{j}\rangle =\sum  a_{rs}\langle X^{r+f}Y^{s+d+1-n-f},  X^{i}Y^{j}\rangle.
\end{eqnarray*}
Since $r+f+s+d+1-n-f\le d$, $s+d+1-n-f\ge 1$ and $i+j\le d-1$,
using the moment matrix structure of the blocks of $M_{d}$ we may express
the last sum as
$$
\sum a_{rs}\langle X^{r+f}Y^{s+d-n-f},  X^{i}Y^{j+1}\rangle . 
$$
Now (\ref{Xrec}) implies that in $M_{d}$ the later expression is equal to 
$$
\langle X^{n+f}Y^{d-n-f},X^{i}Y^{j+1}\rangle
= \beta_{n+f+i,d-n-f+j+1}.
$$
For the case $f=d-n+1$ and $i+j\le d-1$,
\begin{eqnarray*}
\langle X^{d+1},X^{i}Y^{j}\rangle &=& \sum    a_{rs}\langle X^{r}Y^{s}X^{d+1-n},  X^{i}Y^{j}\rangle \\
&=&\sum  a_{rs}\langle X^{r}Y^{s}X^{d-n}, X^{i+1}Y^{j}\rangle \\
&=&\langle X^{d},X^{i+1}Y^{j}\rangle \\
&=& \beta_{d+i+1,j}.
\end{eqnarray*}
\end{proof}

We have just verified that in the left recursive band, 
in blocks of degree at most $d-1$, each column element
coincides with the corresponding ``old" moment from $M_{d}$.
Old moments are also used to {\it{define}} the central (nonrecursive)
 band of columns in blocks of degree at most $d-1$. \ We next use these left and central bands,
together with (\ref{Y^m_rec}), to show that the column elements in the right recursive band,
in blocks of degree at most $d-1$,
also agree with corresponding old moments.

\begin{lemma}\label{oldright}
For $0\le k\le d+1-m$, $i,j\ge0$, $i+j\le d-1$,
column $X^{d+1-m-k}Y^{m+k}$, as defined by (\ref{Y^m_rec}), satisfies
$\langle X^{d+1-m-k}Y^{m+k},X^{i}Y^{j}\rangle =
\beta_{i+d+1-m-k,m+k+j}$.
\end{lemma}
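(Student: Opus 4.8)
The plan is to mirror the structure of Lemma \ref{old=new}, which handled the \emph{left} recursive band, and adapt it to the \emph{right} recursive band defined by (\ref{Y^m_rec}). The key input is that in $M_d$ the relations (\ref{Yrec}) hold, namely $X^kY^{m+l}=(x^ky^lq)(X,Y)$ whenever $k+l+m\le d$, and that the entries of the blocks $B[j,d+1]$ of degree $j\le d-1$ are forced by moment-matrix (Hankel) structure to equal the inherited moments $\beta_{i+h,\,d+1-i+k}$. So the statement to prove is precisely that the recursive definition (\ref{Y^m_rec}) of column $X^{d+1-m-k}Y^{m+k}$, when restricted to a row $X^iY^j$ in a block of degree $i+j\le d-1$, yields exactly $\beta_{i+d+1-m-k,\,m+k+j}$.

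First I would expand, directly from (\ref{Y^m_rec}),
\[
\langle X^{d+1-m-k}Y^{m+k},X^{i}Y^{j}\rangle
=\sum_{u,v\ge 0,\,u+v\le m,\,v<m} b_{uv}\,\langle X^{u+d+1-m-k}Y^{v+k},X^{i}Y^{j}\rangle .
\]
The crucial bookkeeping point, exactly as in Lemma \ref{old=new}, is that each vector $X^{u+d+1-m-k}Y^{v+k}$ appearing on the right has total degree $u+v+d+1-m\le d$, and since $k\le d+1-m$ we also have a positive $x$-exponent margin, namely $u+d+1-m-k\ge u \ge 0$ with at least one unit to spare in the generic case; more precisely $u+d+1-m-k + v+k = u+v+d+1-m \le d$ forces $v+k\ge 1$ or $u+d+1-m-k\ge 1$, so I can shift a power of $x$ (or $y$) along a cross-diagonal to move from the degree-$(d+1)$ block back into a degree-$d$ block of $M_d$. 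Concretely, using the Hankel property of the blocks of $M_d$,
\[
\langle X^{u+d+1-m-k}Y^{v+k},X^{i}Y^{j}\rangle
=\langle X^{u+d-m-k}Y^{v+k},X^{i+1}Y^{j}\rangle ,
\]
(when the $x$-margin is used; symmetrically with $y$ when $k=d+1-m$). Summing back over $u,v$ and invoking the relation (\ref{Yrec}) in $M_d$, the sum collapses to $\langle X^{d-m-k}Y^{m+k},X^{i+1}Y^{j}\rangle=\beta_{i+1+d-m-k,\,m+k+j}=\beta_{i+d+1-m-k,\,m+k+j}$, which is the desired value. As in Lemma \ref{old=new} one must treat the boundary case $k=d+1-m$ (column $X^{0}Y^{d+1}=Y^{d+1}$) separately, shifting a power of $y$ instead; there $\langle Y^{m+v}X^{u}, X^iY^j\rangle = \langle Y^{m+v-1}X^u,X^iY^{j+1}\rangle$ and the same collapse via (\ref{Yrec}) gives $\beta_{i+0+d+1-m,\,m+k+j}$ with $k=d+1-m$.

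The main obstacle — and the reason this is a genuine lemma rather than a one-line remark — is the subtlety already flagged in the text right after (\ref{Y^m_rec}): when $\deg q=m$, the right-hand side of (\ref{Y^m_rec}) can involve columns $X^{u+d+1-m-k}Y^{v+k}$ with $u+v=m$ that lie in the as-yet-undefined \emph{central} band of $B[d,d+1]$, so one cannot naively "plug in" previously defined values. Here, however, we are only evaluating against rows $X^iY^j$ with $i+j\le d-1$, i.e.\ inside blocks $B[j,d+1]$ of degree $\le d-1$; I would argue that for such rows every inner product $\langle X^{u+d+1-m-k}Y^{v+k},X^iY^j\rangle$ lands, after the single cross-diagonal shift described above, inside $M_d$ itself, where it is an honest moment and (\ref{Yrec}) is available. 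Thus the potential circularity with the central band is confined to row $X^d$ (and is handled separately, via (\ref{rightrec}) and the simultaneous completion of the central band, in Lemma \ref{Hankellemma} / the subsequent lemmas), and does not affect the present claim. I would close by noting the bookkeeping is entirely parallel to Lemma \ref{old=new}, with the roles of $p$ and $q$, and of the leftmost and rightmost columns, interchanged.
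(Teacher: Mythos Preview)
Your argument has a genuine gap. You correctly identify that when $\deg q=m$ the expansion
\[
\langle X^{d+1-m-k}Y^{m+k},X^{i}Y^{j}\rangle
=\sum_{u+v\le m,\,v<m} b_{uv}\,\langle X^{u+d+1-m-k}Y^{v+k},X^{i}Y^{j}\rangle
\]
contains degree-$(d+1)$ columns (those with $u+v=m$), and you argue that the single cross-diagonal shift
\[
\langle X^{u+d+1-m-k}Y^{v+k},X^{i}Y^{j}\rangle
=\langle X^{u+d-m-k}Y^{v+k},X^{i+1}Y^{j}\rangle
\]
lands inside $M_d$. The right-hand side is indeed an entry of $M_d$; the problem is justifying the \emph{equality}. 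You say the only obstruction is the central band, but that is not so: for $k\ge 1$ and $m-k\le v\le m-1$ (which is nonempty), the column $X^{u+d+1-m-k}Y^{v+k}$ has $y$-exponent $v+k\ge m$, so it lies in the \emph{right} recursive band itself, with index $k'=v+k-m\in\{0,\ldots,k-1\}$. Its entry in row $X^iY^j$ is \emph{defined} by (\ref{Y^m_rec}), and the assertion that this equals the old moment $\beta_{u+d+1-m-k+i,\,v+k+j}$ is exactly the conclusion of the present lemma for the smaller index $k'$. Your ``single shift'' therefore presupposes what you are proving.

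This is precisely why the paper's proof proceeds by induction on $k$: the base case $k=0$ involves only left- and central-band degree-$(d+1)$ columns (since $v<m$ forces $y$-exponent $<m$), for which Lemma~\ref{old=new} and the old-moment definition of the central band supply the shift; the inductive step then covers the right-band terms via the hypothesis for $k'<k$. The same issue arises in your boundary case $k=d+1-m$: the columns $X^{c}Y^{e+d+1-m}$ with $c+e=m$, $e<m$ are again right-band columns with $k'=d+1-m-c<k$, and you need induction there too. Your overall shape (shift one step, collapse via (\ref{Yrec}) in $M_d$) matches the paper's, but you must organize it as an induction on $k$; otherwise the argument is circular whenever $k\ge 1$ and $q$ has terms of degree $m$ with $v\ge m-k$.
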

\begin{proof}
The proof is by induction on $k$. \ For $k=0$, we show that
$\langle X^{d+1-m}Y^{m},X^{i}Y^{j}\rangle =
\beta_{i+d+1-m,m+j}$. \
From (\ref{Y^m_rec}), we have
\begin{eqnarray*}
\langle X^{d+1-m}Y^{m},X^{i}Y^{j}\rangle &=& \sum \limits_{a,b\ge 0, a+b\le m-1} 
\alpha_{ab} \langle X^{d+1-m+a}Y^{b},X^{i}Y^{j}\rangle \\
& \quad & + \sum \limits_{c,e\ge 0, c+e=m, e<m} 
\gamma_{ce} \langle X^{c+d+1-m}Y^{e},X^{i}Y^{j}\rangle.
\end{eqnarray*}
Since $a+b<m$, then in $M_{d}$,
$$ \langle X^{d+1-m+a}Y^{b},X^{i}Y^{j}\rangle =
\beta_{d+1-m+a+i,b+j}.$$
Since $e<m$, $ \langle X^{c+d+1-m}Y^{e},X^{i}Y^{j}\rangle$
is in either the left or central band, and thus equals the old moment
$\beta_{c+d+1-m+i,e+j}$.
Now
 
$$
\langle X^{d+1-m}Y^{m},X^{i}Y^{j}\rangle =
\displaystyle \sum \limits_{a,b\ge 0, a+b\le m-1} 
\alpha_{ab} \beta_{d+1-m+a+i,b+j}+
\displaystyle \sum \limits_{c,e\ge 0, c+e=m, e<m} 
\gamma_{ce} \beta_{c+d+1-m+i,e+j}.
$$

In $M_{d}$, the latter expression equals
\begin{eqnarray*}
\sum \alpha_{ab} \langle X^{d-m+a}Y^{b},X^{i+1}Y^{j}\rangle + \sum \gamma_{ce} \langle X^{c+d-m}Y^{e},X^{i+1}Y^{j}\rangle &=& \langle X^{d-m}Y^{m},X^{i+1}Y^{j}\rangle \\
&=& \beta_{d-m+i+1,m+j},
\end{eqnarray*}

as desired. \ We next assume the result is true for $0,\ldots,k-1$. \ Consider first
the case when $k<d+1-m$.
We have
\begin{eqnarray*}
\langle X^{d+1-m-k}Y^{m+k},X^{i}Y^{j}\rangle &=&
\sum \limits_{a,b\ge 0, a+b\le m-1} 
\alpha_{ab} \langle X^{d+1-m-k+a}Y^{b+k},X^{i}Y^{j}\rangle \\
& \quad & + \sum \limits_{c,e\ge 0, c+e=m, e<m} 
\gamma_{ce} \langle X^{c+d+1-m-k}Y^{e+k},X^{i}Y^{j}\rangle.
\end{eqnarray*}
The term
$\langle X^{d+1-m-k+a}Y^{b+k},X^{i}Y^{j}\rangle$
is a component of $M_{d}$, and thus equals the corresponding moment.
Since $e+k\le m+ (k-1)$, $  X^{c+d+1-m-k}Y^{e+k}$ is, by induction,
a column for which the elements of row-degree $i+j$ are old moments.
Thus, 
\begin{eqnarray*}
\langle X^{d+1-m-k}Y^{m+k},X^{i}Y^{j}\rangle = \sum \alpha_{ab} \beta_{d+1-m-k+a+i,b+k+j}+ \sum \gamma_{ce} \beta_{c+d+1-m-k+i,e+k+j}.
\end{eqnarray*}
In $M_{d}$, the last expression equals
\begin{eqnarray*}
\sum \alpha_{ab} \langle X^{d+a-m-k}Y^{b+k},X^{i+1}Y^{j}\rangle \quad \quad \\ 
\quad \quad + \sum \gamma_{ce} \langle X^{c+d-m-k}Y^{e+k},X^{i+1}Y^{j}\rangle 
&=& \langle X^{d-m-k}Y^{m+k},X^{i+1}Y^{j}\rangle \\
&=& \beta_{d-m-k+i+1,m+k+j}.
\end{eqnarray*}

Finally, we consider the case $k=d+1-m$.
We have
\begin{eqnarray*}
\langle Y^{d+1},X^{i}Y^{j}\rangle &=& \sum \limits_{a,b\ge 0, a+b\le m-1} 
\alpha_{ab} \langle X^{a}Y^{b+d+1-m},X^{i}Y^{j}\rangle \\
&& \quad + \sum \limits_{c,e\ge 0, c+e=m, e<m} 
\gamma_{ce} \langle X^{c}Y^{e+d+1-m},X^{i}Y^{j}\rangle.
\end{eqnarray*}
Since $e<m$, then $c\ge1$, so $X^{c}Y^{e+d+1-m}$ is to the
left of $Y^{d+1}$, i.e., $c=d+1-m-k^{\prime}$ for
$k^{\prime} = d+1-m-c<k$. \ Thus, by induction,
\begin{eqnarray*}
\langle Y^{d+1},X^{i}Y^{j}\rangle = \sum \alpha_{ab} \beta_{a+i,b+d+1-m+j}+ \sum \gamma_{ce} \beta_{c+i,e+d+1-m+j}.
\end{eqnarray*}
In $M_{d}$, the last expression equals
\begin{eqnarray*}
\sum \alpha_{ab} \langle X^{a}Y^{b+d-m},X^{i}Y^{j+1}\rangle + \sum \gamma_{ce} \langle X^{c}Y^{e+d-m},X^{i}Y^{j+1}\rangle &=&
 \langle Y^{d},X^{i}Y^{j+1}\rangle \\
 &=& \beta_{i,d+j+1},
\end{eqnarray*}
as desired.
\end{proof}

To complete the definition of $B(d+1)$ we must define $B[d,d+1]$.
Within this proposed block, we first use (\ref{Xton_rec}) to define the left
recursive band, $X^{d+1},\ldots,X^{n}Y^{d+1-n}$.
Note that between the end of the left band, $X^{n}Y^{d+1-n}$,
and the beginning of the right band, $X^{n+1-m}Y^{m}$,
there is a central band of $n+m-d-2$ columns; set $\delta := n+m-d-1$.
In row $X^{d}$, each of the components in  the central columns,
$\langle X^{n-1}Y^{d+2-n},X^{d}\rangle,\ldots,\langle X^{d+2-m}Y^{m-1},X^{d}\rangle$, corresponds via a cross-diagonal to a
component of column $X^{n}Y^{d+1-n}$ (whose value is known from (\ref{Xton_rec})),
i.e., 
$$
\langle X^{n-j}Y^{d+1-n+j},X^{d}\rangle
=\langle X^{n}Y^{d+1-n},X^{d-j}Y^{j}\rangle \ (1\le j\le m+n-d-2).
$$
We may thus use (\ref{Y^m_rec}) to define
$\langle X^{d+1-m}Y^{m},X^{d}\rangle$, and we extend
the latter value along the central-band section of the cross-diagonal to which it belongs. \
Next, in row $X^{d-1}Y$, we  use this value with (\ref{Y^m_rec})
to define $\langle X^{d+1-m}Y^{m},X^{d-1}Y\rangle$, and
we extend this value along 
the central-band section of
its cross-diagonal.
Proceeding in this way, we completely define column $X^{d+1-m}Y^{m}$
and insure that it is Hankel with respect to the central band.
Finally, we use (\ref{Y^m_rec}) to define column $X^{d-m}Y^{m+1}$, and, successively,
 $X^{d-m-1}Y^{m+2},\ldots,Y^{d+1}$. \ This completes the definition
of a proposed block $B[d,d+1]$. \ However, to ensure that it is
well-defined as a moment block, we must check that for a cross-diagonal which intersects
columns $X^{n}Y^{d+1-n}$ and $X^{d+1-m}Y^{m}$, the components of the
cross-diagonal in these columns agree in value, i.e., the values arising
from (\ref{Xton_rec}) are consistent with those arising from (\ref{Y^m_rec}). \ More generally,
we need to show that the block we have defined is constant on cross-diagonals.

To show that $B[d,d+1]$ is well-defined and Hankel, we begin
with the
 following general result concerning
adjacent columns that are recursively determined from the same
column dependence relation.
Suppose in $Col~M_{d}$ there is a dependence relation
$X^{c}Y^{e} = p(X,Y)$, where $c+e=d$ and
$p(x,y) \equiv \displaystyle \sum \limits_{a,b\ge 0, a+b\le d-1}^{}
\alpha_{ab}x^{a}y^{b}\in \mathcal{P}_{d-1}$.
Then the elements of  $Col~M_{d}$ defined by
\begin{eqnarray*}
X^{c+1}Y^{e} \equiv (xp)(X,Y) := \sum \limits_{a,b\ge 0, a+b\le d-1}^{}\alpha_{ab}X^{a+1}Y^{b}
\end{eqnarray*}
and
\begin{eqnarray*}
X^{c}Y^{e+1} \equiv (yp)(X,Y) := \sum \limits_{a,b\ge 0, a+b\le d-1}^{}\alpha_{ab}X^{a}Y^{b+1}
\end{eqnarray*}
are Hankel with respect to each other,
as follows.
\begin{lemma}\label{hankel}
For $i,j\ge 0$, $i+j\le d$, $j>0$,
$$\langle X^{c+1}Y^{e},X^{i}Y^{j} \rangle
= \langle X^{c}Y^{e+1},X^{i+1}Y^{j-1} \rangle$$
\end{lemma}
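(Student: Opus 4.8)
The plan is to expand both sides of the claimed identity using the two defining formulas for the columns $X^{c+1}Y^{e}\equiv(xp)(X,Y)$ and $X^{c}Y^{e+1}\equiv(yp)(X,Y)$ displayed just before the statement, and then to recognize that, after invoking the Hankel (moment-matrix) structure of $M_{d}$, the two resulting sums over the index set $\{(a,b):a,b\ge0,\ a+b\le d-1\}$ are literally identical term by term.

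Concretely, I would first write
$$\langle X^{c+1}Y^{e},X^{i}Y^{j}\rangle=\sum_{a,b\ge0,\,a+b\le d-1}\alpha_{ab}\,\langle X^{a+1}Y^{b},X^{i}Y^{j}\rangle .$$
Because $a+b\le d-1$ gives $(a+1)+b\le d$, and because $i+j\le d$, each bracket on the right is a genuine entry of $M_{d}$, hence equals $\beta_{a+i+1,\,b+j}$ by the moment-matrix (Hankel) structure; moreover $(a+b)+(i+j)+1\le(d-1)+d+1=2d$, so this moment really is part of the data of $M_{d}$. Thus $\langle X^{c+1}Y^{e},X^{i}Y^{j}\rangle=\sum_{a,b}\alpha_{ab}\,\beta_{a+i+1,\,b+j}$. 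In the same way, using $j>0$ to ensure that $X^{i+1}Y^{j-1}$ is a legitimate row index (nonnegative exponents, degree $i+j\le d$), and using $a+(b+1)\le d$, I would obtain
$$\langle X^{c}Y^{e+1},X^{i+1}Y^{j-1}\rangle=\sum_{a,b}\alpha_{ab}\,\langle X^{a}Y^{b+1},X^{i+1}Y^{j-1}\rangle=\sum_{a,b}\alpha_{ab}\,\beta_{a+i+1,\,b+j}.$$
The two right-hand sides agree, which is exactly the asserted equality.

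There is no substantial obstacle here; the only thing requiring attention is the bookkeeping that guarantees every bracket occurring above is an entry of $M_{d}$ (so that the identity $\langle X^{k}Y^{\ell},X^{i}Y^{j}\rangle=\beta_{i+k,\,j+\ell}$ may be applied) rather than a quantity that would need extension data. This is precisely where the hypotheses $i+j\le d$ and $j>0$ are used, via the elementary inequalities $(a+1)+b\le d$, $a+(b+1)\le d$, $(i+1)+(j-1)\le d$, and $(a+b)+(i+j)+1\le 2d$, all immediate from $a+b\le d-1$ and $i+j\le d$. Hence the lemma follows at once from the symmetry of moment matrix entries.
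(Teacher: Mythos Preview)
Your proof is correct and is essentially identical to the paper's own argument: both expand the two sides via the defining sums for $(xp)(X,Y)$ and $(yp)(X,Y)$ and use the Hankel structure of $M_{d}$ to identify each term with the moment $\beta_{a+i+1,\,b+j}$. The paper carries out the same rewriting slightly more tersely (shifting $\langle X^{a+1}Y^{b},X^{i}Y^{j}\rangle$ directly to $\langle X^{a}Y^{b+1},X^{i+1}Y^{j-1}\rangle$ without naming the intermediate moment), but the content is the same.
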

\begin{proof}
We have
$$\langle X^{c+1}Y^{e},X^{i}Y^{j} \rangle
 =
 \displaystyle \sum \limits_{a,b\ge 0, a+b\le d-1}^{}
\alpha_{ab} \langle X^{a+1}Y^{b}, X^{i}Y^{j} \rangle,$$
and since each row and column in  the last sum
has degree at most $d$, relative to $M_{d}$ we may rewrite this sum as
$$ \displaystyle \sum \limits_{a,b\ge 0, a+b\le d-1}^{}
\alpha_{ab} \langle X^{a}Y^{b+1}, X^{i+1}Y^{j-1} \rangle
= \langle X^{c}Y^{e+1},X^{i+1}Y^{j-1} \rangle.$$
This completes the proof.
 \end{proof}

It follows immediately from Lemma \ref{hankel} that the left 
recursive band in $B[d,d+1]$ is constant on cross-diagonals.
 We next check that if an element of a column in
 the non-recursive central band can
be reached on a cross-diagonal which intersects both columns
$X^{n}Y^{d+1-n}$ (at the edge of the left recursive band)
and $X^{d+1-m}Y^{m}$ (at the edge of the right recursive band),
then the values obtained from
both of these columns agree. \ This is the substance of the following
lemma.
\begin{lemma}\label{Hankellemma} For $0\le k\le 2d+1-n-m$,
\begin{equation}\label{hankelformula}       \langle X^{d+1-m}Y^{m},X^{d-k}Y^{k} \rangle 
= \langle X^{n}Y^{d+1-n},X^{d-\delta-k}Y^{\delta + k} \rangle.
\end{equation}
\end{lemma}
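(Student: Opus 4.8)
The plan is to prove (\ref{hankelformula}) by induction on $k$, at each step unwinding both entries into explicit linear combinations of the moments $\beta_{ij}$ ($i+j\le2d$) of $M_{d}$ and checking that the two combinations agree. Note that both sides of (\ref{hankelformula}) sit, formally, over the degree-$(2d+1)$ monomial $x^{2d+1-m-k}y^{m+k}$, so the lemma says precisely that the value this ``new moment'' is forced to take when read off column $X^{d+1-m}Y^{m}$ via (\ref{Y^m_rec}) coincides with the value it is forced to take when read off column $X^{n}Y^{d+1-n}$ via (\ref{Xton_rec}).

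The only input beyond the construction of $B[d,d+1]$ is a pair of exponent-transfer identities for $M_{d}$ itself, obtained from (\ref{X})--(\ref{Y}) and recursiveness (\ref{recgen}) in the same spirit as the exponent shifting used in Lemmas \ref{old=new} and \ref{oldright}. Writing $x^{a+n}y^{b}=x^{n}h\ell$ with $h,\ell$ monomials, $h\ell=x^{a}y^{b}$, $\deg h\le d-n$, $\deg\ell\le d$ (possible exactly when $a+b+n\le2d$), applying (\ref{recgen}) to $(x^{n}-p(x,y))\,h$, and pairing against $\ell$ gives
\be
\beta_{a+n,\,b}=\sum_{r,s}a_{rs}\,\beta_{r+a,\,s+b}\qquad(a+b+n\le2d),
\ee
and symmetrically, using $(y^{m}-q(x,y))\,h$,
\be
\beta_{a,\,b+m}=\sum_{u,v}b_{uv}\,\beta_{u+a,\,v+b}\qquad(a+b+m\le2d).
\ee

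For the induction, expand $\langle X^{d+1-m}Y^{m},X^{d-k}Y^{k}\rangle$ by (\ref{Y^m_rec}) and group the monomials $x^{u}y^{v}$ of $q$ into three parts: (a) the part $u+v\le m-1$ contributes old moments $\beta_{u+P,\,v+k}$ with $P:=2d+1-m-k$; (b) the part $u+v=m$, $v<m$, $u\ge\delta$ contributes columns $X^{u+d+1-m}Y^{m-u}$ in the left recursive band, which reduce to old moments by (\ref{Xton_rec})--(\ref{Xrec}); (c) the part $u+v=m$, $v<m$, $1\le u\le\delta-1$ (nonempty only when $\deg q=m$) contributes the central-band entries $\langle X^{u+d+1-m}Y^{m-u},X^{d-k}Y^{k}\rangle$. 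For (c) I would invoke the construction: each such central entry was set equal to a cross-diagonal transport of either an entry of $X^{n}Y^{d+1-n}$ (this happens when $k<u$) or of the entry $\langle X^{d+1-m}Y^{m},X^{d-(k-u)}Y^{k-u}\rangle$ with $k-u<k$ (when $k\ge u$), and in the latter case the inductive hypothesis --- applicable since $0\le k-u\le 2d+1-n-m$ --- identifies this with an entry of $X^{n}Y^{d+1-n}$; either way the central entry equals $\langle X^{n}Y^{d+1-n},\,X^{u+d-\delta-k}Y^{\delta-u+k}\rangle$. Merging (b) and (c) (the ranges of $u$ join up to $1\le u\le m$) and reducing the $X^{n}Y^{d+1-n}$-entries via (\ref{Xton_rec}) yields an explicit expression for the left side of (\ref{hankelformula}); reducing its right side $\langle X^{n}Y^{d+1-n},X^{d-\delta-k}Y^{\delta+k}\rangle$ via (\ref{Xton_rec}) and then applying the second transfer identity followed by the first to its low-degree part produces exactly the same expression. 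Each side condition needed to invoke the transfer identities collapses, after substitution, to $u+v\le m-1$ or $r+s\le n-1$, which hold automatically; this is precisely where the hypothesis $0\le k\le2d+1-n-m$ is used, keeping every index nonnegative. The base case $k=0$ is the subcase in which every central entry appearing is of the first (``$k<u$'') kind, so the induction is not self-referential.

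I expect the main obstacle to be organizational rather than conceptual: one must track with care which cross-diagonal each central-band entry sits on and, following the row-by-row (top-down) construction of $B[d,d+1]$, from which edge column and which row it was transported, so as to be certain it is already defined at the moment (\ref{Y^m_rec}) calls for it. The genuinely delicate situation is $\deg q=m$ --- the case flagged immediately after (\ref{Y^m_rec}) --- since that is exactly when central-band columns appear inside the expansion of $X^{d+1-m}Y^{m}$; the forward-propagation order in the construction (each newly computed entry of $X^{d+1-m}Y^{m}$ pushed along its cross-diagonal into the central band before the next row is processed) must be arranged so that the induction on $k$ always has the data it needs.
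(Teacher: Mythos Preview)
Your proposal is correct and follows essentially the same strategy as the paper: induction on $k$, expanding the left side of (\ref{hankelformula}) via (\ref{Y^m_rec}), handling the central-band terms by transport to column $X^{n}Y^{d+1-n}$ (directly when $k<u$, by the inductive hypothesis when $k\ge u$), and matching against the right side using the recursive relations in $M_{d}$.

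The organizational difference worth noting is your use of the moment-level ``transfer identities'' $\beta_{a+n,b}=\sum a_{rs}\beta_{r+a,s+b}$ and $\beta_{a,b+m}=\sum b_{uv}\beta_{u+a,v+b}$. The paper instead applies recursiveness directly at the level of matrix entries, which forces a case split according to whether $d-k\ge n$ (so that the row $X^{d-k}Y^{k}$ carries an $X^{n}$ factor and one can invoke (\ref{Xrec}) in the rows) or $d-k<n$ (in which case the paper works from the right side, using $\delta+k\ge m$ to peel off $Y^{m}$ via (\ref{Yrec})). Your abstraction to moment identities sidesteps this split: both relations are applied uniformly to the double sum $\sum a_{rs}\sum b_{uv}\beta_{r+u+2d+1-n-m-k,\,s+v+k}$, and the degree constraints you need ($a+b+n\le 2d$, $a+b+m\le 2d$) reduce automatically to $r+s\le n-1$ and $u+v\le m-1$. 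This is a genuine streamlining of the bookkeeping, though the underlying mechanism---recursiveness of $M_{d}$ plus the Hankel/cross-diagonal structure of $B[d,d+1]$---is identical to the paper's. The paper's hands-on treatment, by contrast, makes the role of Lemma \ref{oldright} (old moments in the right band) more visible at the final step of the $d-k\ge n$ case.
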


\begin{proof}
The proof is by induction on $k$. \ We begin with the base case, $k=0$, and seek to show that
$\langle X^{d+1-m}Y^{m},X^{d} \rangle
= \langle X^{n}Y^{d+1-n},X^{d-\delta}Y^{\delta} \rangle $ \ (recall that $\delta:=n+m-d-1$). \
Using (\ref{Y^m_rec}), we  may express
$ \langle X^{d+1-m}Y^{m},X^{d} \rangle$ as
\begin{equation}\label{rightbase}
 \displaystyle \sum \limits_{a,b\ge 0, a+b\le m-1} 
\alpha_{ab} \langle X^{d+1-m+a}Y^{b},X^{d}\rangle +
\displaystyle \sum \limits_{c,e\ge 0, c+e=m, e<m} 
\gamma_{ce} \langle X^{d+1-e}Y^{e},X^{d}\rangle.
\end{equation}
Note that $ \langle X^{d+1-m+a}Y^{b},X^{d}\rangle$ is a
component of $M_{d}$; further, since $e<m$,
$ \langle X^{c+d+1-m}Y^{e},X^{d}\rangle$ is the
endpoint of a cross-diagonal that lies entirely in the
left and central bands, and is thus constant. \ Therefore, we may
rewrite  (\ref{rightbase}) as
$$
\displaystyle \sum \limits_{a,b\ge 0, a+b\le m-1} 
\alpha_{ab} \langle X^{d}, X^{d+1-m+a}Y^{b}\rangle +
\displaystyle \sum \limits_{c,e\ge 0, c+e=m, e<m} 
\gamma_{ce} \langle  X^{d+1},    X^{d-e}Y^{e} \rangle
$$
\begin{eqnarray*}
&=& \sum \alpha_{ab} \langle  \sum a_{rs}  X^{d-n+r}Y^{s}, X^{d+1-m+a}Y^{b}\rangle +
\sum \gamma_{ce} \langle  \sum a_{rs} X^{d-n+r+1}Y^{s}, X^{d-e}Y^{e} \rangle \\
&=& \sum a_{rs}  \sum \alpha_{ab} \langle  X^{d-n+r}Y^{s}, X^{d+1-m+a}Y^{b}\rangle +
\sum a_{rs} \sum \gamma_{ce} \langle   X^{d-n+r+1}Y^{s}, X^{d-e}Y^{e} \rangle \\
&=& \sum a_{rs} ( \sum \alpha_{ab} \langle  X^{d+1-m+a}Y^{b}, X^{d-n+r}Y^{s}\rangle +
\sum \gamma_{ce} \langle    X^{d-e}Y^{e}, X^{d-n+r+1}Y^{s} \rangle) \\
&=& \sum a_{rs}    \langle  \sum \alpha_{ab}  X^{d-m+a}Y^{b} +
\sum \gamma_{ce}   X^{d-e}Y^{e},   X^{d-n+r+1}Y^{s} \rangle \\
&=& \sum a_{rs}    \langle    X^{d-m}Y^{m} , X^{d-n+r+1}Y^{s} \rangle.
\end{eqnarray*}
Since $\delta=m+n-d-1$, in $M_{d}$ the last sum is equal to
$$
\sum a_{rs}    \langle    X^{r}Y^{d+1-n+s}, X^{d-\delta}Y^{\delta} \rangle = \langle X^{n}Y^{d+1-n},X^{d-\delta}Y^{\delta} \rangle,
$$
which completes the proof of the base case. 

We assume now that (\ref{hankelformula}) holds for $0,\ldots, k-1$, with  $k-1 < 2d+1-n-m$.
To establish (\ref{hankelformula}) for $k$, we consider first the case $d-k \ge n$.
Let us write $\kappa:=\langle X^{d+1-m}Y^{m},X^{d-k}Y^{k} \rangle$ as
\begin{eqnarray} \label{levelk}
\kappa&=& \sum \limits_{a,b\ge 0, a+b\le m-1} \alpha_{ab} \langle X^{d+1-m+a}Y^{b},X^{d-k}Y^{k}\rangle \nonumber \\
& \quad & + \sum \limits_{c,e\ge 0, c+e=m, e<m,d+1-e\ge n} \gamma_{ce} \langle X^{d+1-e}Y^{e},X^{d-k}Y^{k}\rangle \nonumber \\ 
& \quad & + \sum \limits_{c,e\ge 0, c+e=m, e<m,d+1-e\ge n,d+1-e<n} \gamma_{ce} \langle X^{d+1-e}Y^{e},X^{d-k}Y^{k}\rangle.
\end{eqnarray}
Note that the components in the first sum of (\ref{levelk}) lie in $M_{d}$. \ In the third sum,
since $d+1-e<n$, column $X^{d+1-e}Y^{e}$ is in the middle band, and the component
$\gamma:=\langle X^{d+1-e}Y^{e},X^{d-k}Y^{k}\rangle$ 
lies on a cross-diagonal $\sigma$ strictly above the cross-diagonal for $\kappa$. \ Either because $\sigma$ does not
intersect column $X^{d+1-m}Y^{m}$, or by induction if it does, we see that $\gamma$ has the same value as 
$\langle X^{n}Y^{d+1-n},X^{d-k-(n-(d+1-e))}Y^{k+n-(d+1-e)}\rangle$
(on the same cross-diagonal).
Thus (\ref{levelk}) can be expressed as
\begin{eqnarray} \label{levelk_2}
\kappa &=& \sum \limits_{a,b\ge 0, a+b\le m-1} \alpha_{ab} \langle  X^{d-k}Y^{k}, X^{d+1-m+a}Y^{b}\rangle \nonumber \\ 
& \quad & + \sum \limits_{c,e\ge 0, c+e=m, e<m,d+1-e\ge n} \gamma_{ce} \langle X^{n}X^{d+1-e-n}Y^{e},X^{d-k}Y^{k}\rangle \nonumber \\ 
& \quad & + \sum \limits_{c,e\ge 0, c+e=m, e<m,d+1-e\ge n,d+1-e<n} \gamma_{ce} \langle X^{n}Y^{d+1-n},X^{d-k-(n-(d+1-e))}Y^{k+n-(d+1-e)}\rangle \nonumber \\ 
\medskip \medskip
&=& \sum \alpha_{ab} \langle X^{n} X^{d-k-n}Y^{k}, X^{d+1-m+a}Y^{b}\rangle \nonumber \\
& \quad & + \sum \gamma_{ce} \langle X^{n}X^{d+1-e-n}Y^{e},X^{d-k}Y^{k}\rangle \nonumber \\ 
& \quad & + \sum \gamma_{ce} \langle  X^{n}Y^{d+1-n},X^{d-k-(n-(d+1-e))}Y^{k+n-(d+1-e)}\rangle \nonumber \\
\medskip \medskip
&=& \sum \alpha_{ab}  \sum a_{rs} \langle  X^{r+d-k-n}Y^{s+k}, X^{d+1-m+a}Y^{b}\rangle \nonumber \\ 
& \quad & + \sum \gamma_{ce} \sum a_{rs} \langle X^{r+d+1-e-n}Y^{s+e},X^{d-k}Y^{k}\rangle \nonumber \\
& \quad & + \sum \gamma_{ce}  \sum a_{rs} \langle  X^{r}Y^{s+d+1-n},X^{d-k-(n-(d+1-e))}Y^{k+n-(d+1-e)}\rangle \nonumber \\
\medskip \medskip
&=& \sum a_{rs}(\sum \alpha_{ab}  \langle  X^{r+d-k-n}Y^{s+k}, X^{d+1-m+a}Y^{b}\rangle \nonumber \\
& \quad & + \sum \gamma_{ce}  \langle X^{r+d+1-e-n}Y^{s+e},X^{d-k}Y^{k}\rangle \nonumber \\
& \quad & + \sum \gamma_{ce}  \langle  X^{r}Y^{s+d+1-n},X^{d-k-(n-(d+1-e))}Y^{k+n-(d+1-e)}\rangle ).
\end{eqnarray}
Using the symmetry of $M_{d}$ in the first and third inner sums of the last
expression, we may rewrite this
expression as
\begin{eqnarray}\label{level3}
& & \sum a_{rs}(\sum \alpha_{ab}  \langle X^{d+1-m+a}Y^{b}, X^{r+d-k-n}Y^{s+k}\rangle + \sum \gamma_{ce}  \langle X^{r+d+1-e-n}Y^{s+e},X^{d-k}Y^{k}\rangle \nonumber \\ 
& \quad & + \sum \gamma_{ce}  \langle  X^{d-k-(n-(d+1-e))}Y^{k+n-(d+1-e)}, X^{r}Y^{s+d+1-n} \rangle).
\end{eqnarray}
In the second inner sum of (\ref{level3}), 
$\langle X^{r+d+1-e-n}Y^{s+e},X^{d-k}Y^{k}\rangle$
is a component of $M_{d}$ and thus equals the moment 
$\beta_{r+d+1-e-n+d-k,s+e+k}$. \ Since $X^{d-k+r-n}Y^{s+k}$
is a row of degree at most $d-1$,  this moment
coincides with 
$\langle X^{c+d+1-m}Y^{e},X^{d-k+r-n}Y^{s+k}\rangle$
from the left band of $B[d+r+s-n,n+1]$.
Further, in the third inner sum of (\ref{level3}),
$$
\langle  X^{d-k-(n-(d+1-e))}Y^{k+n-(d+1-e)}, X^{r}Y^{s+d+1-n} \rangle
$$
is also a component of $M_{d}$, equal to $\beta_{r+d-k-(n-(d+1-e)),s+k+n-(d+1-e)+d+1-n}$, and this moment coincides with $\langle X^{d+1-e}Y^{e},X^{r+d-k-n}Y^{s+k}\rangle$ from the middle band 
in $B[d+r+s-n,d+1]$. \ Thus, the expression in (\ref{level3}) can be written as
\begin{eqnarray}\label{level4}
&& \sum a_{rs}(\sum \limits_{a,b\ge 0, a+b\le m-1} \alpha_{ab} \langle X^{d+1-m+a}Y^{b}, X^{r+d-k-n}Y^{s+k}\rangle \nonumber \\
& \quad & + \sum  \limits_{c,e\ge 0, c+e=m, e<m,d+1-e\ge n} \gamma_{ce} \langle X^{d+1-m+c}Y^{e},X^{r+d-k-n}Y^{s+k}\rangle \nonumber \\ 
& \quad & + \sum  \sum  \limits_{c,e\ge 0, c+e=m, e<m,d+1-e < n} \gamma_{ce}  \langle  X^{d+1-m+c}Y^{e}, X^{r+d-k-n}Y^{s+k} \rangle),
\end{eqnarray}
which equals
\begin{equation}\label{level5}
\sum a_{rs}  \langle  X^{d+1-m}Y^{m}, X^{r+d-k-n}Y^{s+k} \rangle).
\end{equation}
Since $ X^{r+d-k-n}Y^{s+k}$
is a row of degree at most $d-1$, Lemma \ref{oldright}  implies
that the expression in (\ref{level5}) equals
\begin{eqnarray*}
\sum a_{rs} \beta_{d+1-m+r+d-k-n,m+s+k} &=& \sum a_{rs} \beta_{r+d-\delta-k,s+d+1-n+\delta+k} \\
&=& \sum a_{rs} \langle  X^{r}Y^{s+d+1-n}, X^{d-\delta-k}Y^{\delta+k} \rangle \\
&=&  \langle  X^{n}Y^{d+1-n}, X^{d-\delta-k}Y^{\delta+k} \rangle .
\end{eqnarray*}
This completes the proof of the induction step for (\ref{hankelformula}) when $d-k\ge n$.

We next treat the case when $d-k<n$, which implies $\delta + k\ge m$.
We have
\begin{eqnarray}\label{decomp}
\langle  X^{n}Y^{d+1-n}, X^{d-\delta-k}Y^{\delta+k}\rangle &=& 
\sum a_{rs} \langle  X^{r}Y^{d+1-n+s}, X^{d-\delta-k}Y^{\delta+k}\rangle \nonumber  \\
&=& \sum a_{rs} \langle X^{d-\delta-k}Y^{m}Y^{\delta+k-m}, X^{r}Y^{d+1-n+s} \rangle \nonumber \\
&=& \sum a_{rs}(\displaystyle \sum \limits_{a,b\ge 0, a+b\le m-1}\alpha_{ab}\langle X^{a+ d-\delta-k}Y^{b+\delta+k-m}, X^{r}Y^{d+1-n+s} \rangle \nonumber \\
& \quad & + \sum \limits_{c,e\ge 0, c+e=m, e<m}\gamma_{ce} \langle X^{c+d-\delta-k}Y^{e+\delta+k-m}, X^{r}Y^{d+1-n+s} \rangle). \nonumber \\
&& \quad
\end{eqnarray}
Note for future reference that all of the matrix components that appear in (\ref{decomp}) come from $M_{d}$. 

We now consider 
\begin{eqnarray}\label{d2}
\langle X^{d+1-m}Y^{m},X^{d-k}Y^{k} \rangle &=& \sum \limits_{a,b\ge 0, a+b\le m-1} \alpha_{ab} \langle X^{d+1-m+a}Y^{b},X^{d-k}Y^{k}\rangle \nonumber \\
& \quad & + \sum \limits_{c,e\ge 0, c+e=m, e<m} \gamma_{ce} \langle X^{d+1-m+c}Y^{e},X^{d-k}Y^{k}\rangle \nonumber \\
&=& \sum \alpha_{ab} \langle X^{d-k}Y^{k},X^{d+1-m+a}Y^{b}\rangle \nonumber \\
& \quad & + \sum \gamma_{ce} \langle X^{d+1-e}Y^{e},X^{d-k}Y^{k}\rangle \quad \quad
\end{eqnarray}
(using symmetry of $M_d$ in the first sum). \ Since $k-(n-(d-k))$, $d+1-m+a-n+d-k$ $(= a+d-\delta-k)$, and $b+n-(d-k)$
are all nonnegative, by applying the block-Hankel property of $M_{d}$ to the
first sum in (\ref{d2}), we may rewrite the expression in (\ref{d2}) as
\begin{eqnarray}\label{d3}
&&\sum \alpha_{ab} \langle   X^{n} Y^{k-(n-(d-k))},X^{d+1-m+a-n+d-k}Y^{b+n-(d-k)}\rangle \nonumber \\
& \quad & + \sum \gamma_{ce} \langle X^{d+1-e}Y^{e},X^{d-k}Y^{k}\rangle
\end{eqnarray}
\begin{eqnarray}\label{d4}
&=& \sum \alpha_{ab} \sum a_{rs}  \langle   X^{r} Y^{s+k-(n-(d-k))},X^{d+1-m+a-n+d-k}Y^{b+n-(d-k)}\rangle \nonumber \\
& \quad & + \sum \gamma_{ce} \langle X^{d+1-e}Y^{e},X^{d-k}Y^{k}\rangle,
\end{eqnarray}
and all of the matrix components in the first double  sum of (\ref{d4}) are from $M_{d}$.
Comparing the components in the first double sums of 
(\ref{decomp}) and (\ref{d4}), we have
\begin{eqnarray*}
\langle X^{a+ d-\delta-k}Y^{b+\delta+k-m},X^{r}Y^{d+1-n+s} \rangle &=& \beta_{a+d-\delta-k+r,b+\delta+k-m+d+1-n+s} \\
&=&\beta_{r+d+1-m+a-n+d-k,s+k-(n-(d-k))+b+n-(d-k)} \\
&=& \langle X^{r} Y^{s+k-(n-(d-k))},X^{d+1-m+a-n+d-k}Y^{b+n-(d-k)}\rangle,
\end{eqnarray*}
so the first double sums of (\ref{decomp}) and (\ref{d4}) are equal.

Let us write the rightmost sum in (\ref{d4}) as
\begin{eqnarray}\label{d5}
&&\sum \limits_{c,e\ge 0, c+e=m,e<m,d+1-e\ge n} \gamma_{ce} \langle X^{n} X^{d+1-e-n}Y^{e},X^{d-k}Y^{k}\rangle  \nonumber \\
& \quad & + \sum \limits_{c,e\ge 0, c+e=m,e<m,d+1-e < n}\gamma_{ce} \langle X^{d+1-e}Y^{e},X^{d-k}Y^{k}\rangle.
\end{eqnarray}
In the second sum of (\ref{d5}), since $d+1-e<n$, the component
 $\langle X^{d+1-e}Y^{e},X^{d-k}Y^{k}\rangle$ (from the
middle band) has the same value as the component 
$\langle X^{n}Y^{d+1-n},X^{d-k-(n-(d+1+e)}Y^{k+n-(d+1-e)}\rangle$ 
on the same cross-diagonal. \ (This is because the cross-diagonal is strictly above that for $ \langle X^{d+1-m}Y^{m},X^{d-k}Y^{k} \rangle $,
so the conclusion follows by definition or induction.) \ We may now write the expression in (\ref{d5}) as
\begin{eqnarray}\label{d6}
&& \sum \limits_{c,e\ge 0, c+e=m,e<m,d+1-e\ge n} \gamma_{ce} \sum a_{rs} \langle X^{r+d+1-e-n}Y^{s+e},X^{d-k}Y^{k}\rangle \nonumber \\
& \quad & + \sum \limits_{c,e\ge 0, e<m,c+e=m,d+1-e < n} \gamma_{ce} \langle X^{n}Y^{d+1-n},X^{d-k-(n-(d+1-e))}Y^{k+n-(d+1-e)}\rangle \nonumber  \\
&=& \sum \limits_{c,e \ge 0, e<m,c+e=m,d+1-e\ge n} \gamma_{ce} \sum a_{rs}  \langle  X^{r+d+1-e-n}Y^{s+e},X^{d-k}Y^{k} \rangle \nonumber  \\
& \quad & + \sum \limits_{c,e \ge 0, c+e=m,d+1-e < n} \gamma_{ce} \sum a_{rs} \langle X^{r}Y^{s+d+1-n},X^{d-k-(n-(d+1-e))}Y^{k+n-(d+1-e)}\rangle. \quad \quad 
\end{eqnarray}
All of the matrix components in (\ref{d6}) are from $M_{d}$, so
(\ref{d6}) can be expressed as
$$
\sum a_{rs} \sum \limits_{c+e=m} \beta_{r+d+1-e-n+d-k,s+e+k}.
$$ 
It is straightforward to check
that this double sum coincides with the second double sum in (\ref{decomp})
(whose matrix components also come entirely from $M_{d}$).
This completes the proof that the second double sums  in (\ref{decomp})
and 
(\ref{d4}) have the same value, so the expressions in
(\ref{decomp})
and (\ref{d4}) are equal, which completes the proof of the induction
when $d-k<n$. \ Thus, the induction is complete.
\end{proof}

We have shown above that in $B[d,d+1]$ the columns
$X^{d+1},\ldots,X^{d+1-m}Y^{m}$ are well-defined and
Hankel with respect to one another. \ Using (\ref{Y^m_rec}), we also
successively defined columns $X^{d-m}Y^{m+1},\ldots,Y^{d+1}$.
We next show that the columns $X^{d-m+1}Y^{m},\ldots,Y^{d+1}$
are Hankel with respect to each other, so that all of $B[d,d+1]$ has the
Hankel property.
\begin{lemma}\label{righthankel}
For $0\le s \le d+1-m$ and $i,j\ge 0$ with $i+j = d$ and $j>0$, we have 
$$ \langle X^{d+1-m-s}Y^{m+s},X^{i}Y^{j}\rangle =
 \langle X^{d-m-s}Y^{m+s+1},X^{i+1}Y^{j-1}\rangle.$$
\end{lemma}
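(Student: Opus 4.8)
The plan is to exploit that both columns appearing in the statement are recursively determined from the single relation $Y^{m}=q(X,Y)$. Set $\pi_{s}:=x^{d-m-s}y^{s}q$; then by (\ref{Y^m_rec}) we have $X^{d+1-m-s}Y^{m+s}=(x\pi_{s})(X,Y)$ and $X^{d-m-s}Y^{m+s+1}=(y\pi_{s})(X,Y)$. By linearity it therefore suffices, for each monomial $x^{u}y^{v}$ of $q$ (so $u+v\le m$ and $v<m$), to prove that
$$\langle X^{u+d+1-m-s}Y^{v+s},X^{i}Y^{j}\rangle=\langle X^{u+d-m-s}Y^{v+s+1},X^{i+1}Y^{j-1}\rangle .$$
A routine bookkeeping check shows that these two entries lie on the same cross-diagonal: both rows have degree $d$, both columns have degree $u+v+d+1-m$, and both entries have $x$-exponent sum $i+u+d+1-m-s$ and $y$-exponent sum $j+v+s$. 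I would then split according to whether $u+v\le m-1$ or $u+v=m$.

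If $u+v\le m-1$, both columns have degree at most $d$, so the two entries lie in a single block $B[d,\,u+v+d+1-m]$ of $M_{d}$, which is Hankel; hence the entries are equal. In particular, if $\deg q\le m-1$ this already proves the lemma, and the argument is just the computation made in Lemma \ref{hankel} applied to $p=\pi_{s}\in\mathcal{P}_{d-1}$.

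If $u+v=m$ --- which can occur only when $\deg q=m$, in which case $u\ge1$ --- both columns have degree $d+1$, so the two entries lie on a common cross-diagonal of $B[d,d+1]$. Here I would induct on $s$ (the base case $s=0$ being absorbed below). If $v+s<m$, the column $X^{u+d+1-m-s}Y^{v+s}$ lies in the left or central band, and the column $X^{u+d-m-s}Y^{v+s+1}$ lies in the left or central band or equals $X^{d+1-m}Y^{m}$; in each of these cases the Hankel property on the relevant cross-diagonal has already been established --- by Lemma \ref{hankel} for the left band, and by Lemma \ref{Hankellemma} together with the construction of the central band for the rest --- so the two entries agree. If instead $v+s\ge m$, put $s':=v+s-m=s-u<s$; then $X^{u+d+1-m-s}Y^{v+s}=X^{d+1-m-s'}Y^{m+s'}$ and $X^{u+d-m-s}Y^{v+s+1}=X^{d-m-s'}Y^{m+s'+1}$ are right-band columns of strictly smaller index, and the desired equality is exactly the assertion of the lemma for $s'$ with the same $i,j$, which holds by the induction hypothesis. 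Summing over the monomials of $q$ yields the lemma.

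The main obstacle is the last case: tracking which band each recursively generated column $X^{u+d+1-m-s}Y^{v+s}$ occupies as $s$ varies, and checking that the induction on $s$ is well founded --- precisely, that the degree-$m$ terms of $q$ (for which $u\ge1$) always send us to a right-band column of index $s'=s-u<s$, never to the column currently being defined. A secondary point is the handful of boundary positions, such as the leftmost column $X^{d+1}$ (which arises for $s=0$, $u=m$) and the columns adjacent to $Y^{d+1}$; each of these must be checked to lie in an already-validated portion of $B[d,d+1]$, exactly as with the end-of-band cases handled in Lemmas \ref{old=new}, \ref{oldright}, and \ref{Hankellemma}.
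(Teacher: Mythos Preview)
Your proposal is correct and takes essentially the same approach as the paper: both argue by induction on $s$, expand the right-band column $X^{d+1-m-s}Y^{m+s}$ via the defining relation (\ref{Y^m_rec}), observe that the degree-$\le m-1$ terms land in $M_{d}$ while the degree-$m$ terms land in columns strictly to the left (either in the left/central band, where Hankel is already known, or in a right-band column of smaller index, where it holds by induction), and then shift indices to recognize the expansion of $X^{d-m-s}Y^{m+s+1}$. The paper compresses your case split ``$v+s<m$ vs.\ $v+s\ge m$'' into the single phrase ``to the left of $X^{d+1-m-s}Y^{m+s}$, so the Hankel property holds for this column by induction,'' but the content is identical; your version simply makes explicit which prior lemma (\ref{hankel} or \ref{Hankellemma}) is being invoked when the column falls outside the right band.
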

\begin{proof}
The proof is by induction on $s\ge 0$. \ For $s=0$, we have
$ \langle X^{d+1-m}Y^{m},X^{i}Y^{j}\rangle $
\begin{equation}\label{baseeqn}
= \displaystyle \sum \limits_{a,b\ge 0, a+b\le m-1} 
\alpha_{ab} \langle X^{d+1-m+a}Y^{b},X^{i}Y^{j}\rangle +
\displaystyle \sum \limits_{c,e\ge 0, c+e=m, e<m} 
\gamma_{ce} \langle X^{d+1-e}Y^{e},X^{i}Y^{j}\rangle.
\end{equation}
In the first sum, each component is from $M_{d}$. \ In the second sum,
column $X^{d+1-e}Y^{e}$ is strictly to the left 
 of $X^{d+1-m}Y^{m}$, so it is Hankel with respect to its right successor,
$X^{d-e}Y^{e+1}$ .
We may thus rewrite the expression in (\ref{baseeqn}) as  
$$\displaystyle \sum \limits_{a,b\ge 0, a+b\le m-1} 
\alpha_{ab} \langle X^{d-m+a}Y^{b+1},X^{i+1}Y^{j-1}\rangle +
\displaystyle \sum \limits_{c,e\ge 0, c+e=m, e<m} 
\gamma_{ce} \langle X^{d-e}Y^{e+1},X^{i+1}Y^{j-1}\rangle$$
\newline
$= \langle X^{d-m}Y^{m+1},X^{i+1}Y^{j-1}\rangle$.

Assume now that the Hankel property holds through $s-1$ and consider
\begin{eqnarray}\label{level_s} 
\langle X^{d+1-m-s}Y^{m+s},X^{i}Y^{j}\rangle &=& \sum \limits_{a,b\ge 0, a+b\le m-1} \alpha_{ab} \langle X^{d+1-m+a-s}Y^{b+s},X^{i}Y^{j}\rangle  \nonumber \\ 
& \quad & + \sum \limits_{c,e\ge 0, c+e=m, e<m} \gamma_{ce} \langle X^{d+1-e-s}Y^{e+s},X^{i}Y^{j}\rangle.
\end{eqnarray}
As above, in the first sum, each component is from $M_{d}$; in the second
sum, each column  $X^{d+1-e-s}Y^{e+s}$ is to the left of 
$X^{d+1-m-s}Y^{m+s}$, so the Hankel property holds for this
column by induction. \ We may thus write the expression in (\ref{level_s})
as
\begin{eqnarray*}
\sum \limits_{a,b\ a+b\le m-1} \alpha_{ab} \langle X^{d-m+a-s}Y^{b+s+1},X^{i+1}Y^{j-1}\rangle  
&+& \sum \limits_{c+e=m, e<m} \gamma_{ce} \langle X^{d-e-s}Y^{e+s+1},X^{i+1}Y^{j-1}\rangle  \\
&=& \langle X^{d-m-s}Y^{m+s+1},X^{i+1}Y^{j-1}\rangle,
\end{eqnarray*}
which completes the proof by induction.
\end{proof}

{\bf STEP (ii)}: \ The preceding results show that under the hypotheses of
Theorem \ref{main}, there exists a unique block
$B(d+1)$ that is consistent with 
recursiveness in 
$\bpm
M_{d} &  B(d+1)  \epm$
. \ To prove
Theorem \ref{main}, we must also
 show that  $Ran~B(d+1)\subseteq Ran~M_{d}$.
The following lemma is a step toward this end; it shows
that the rows of
$\bpm
M_{d} &  B(d+1)  \epm$
of the form $X^{n+f}Y^{g}$ $(f,g\ge 0, ~n+f+g\le d)$ are
recursively determined from row $X^{n}$. 

\begin{lemma}\label{recXrows}   For $i,j\ge 0, i+j\le d+1$ and
for $f,g\ge 0,~ n+f+g\le d$,
\begin{equation}\label{Xrows}
     \langle X^{i}Y^{j},X^{n+f}Y^{g}\rangle =
\displaystyle \sum \limits_{r,s\ge 0, r+s\le n-1}^{}
   a_{rs}\langle X^{i}Y^{j},  X^{r+f}Y^{s+g}\rangle.
\end{equation}
\end{lemma}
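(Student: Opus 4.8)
The plan is to verify \eqref{Xrows} one column $X^{i}Y^{j}$ at a time, distinguishing cases according to whether $X^{i}Y^{j}$ has degree $\le d$ or $d+1$ and, in the latter case, according to whether $n+f+g\le d-1$ or $n+f+g=d$ (recall the hypothesis gives $n+f+g\le d$). The single fact used throughout is that the column relation $X^{n+f}Y^{g}=(x^{f}y^{g}p)(X,Y)=\sum_{r+s\le n-1}a_{rs}X^{r+f}Y^{s+g}$ holds in $Col~M_{d}$, this being an instance of \eqref{Xrec} precisely because $n+f+g\le d$; note also that every column $X^{r+f}Y^{s+g}$ occurring on the right of \eqref{Xrows} has degree $r+s+f+g\le(n-1)+(d-n)=d-1$. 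If $X^{i}Y^{j}$ is a column of $M_{d}$ (i.e. $i+j\le d$), then \eqref{Xrows} is immediate from the symmetry of $M_{d}$: evaluating the above column relation in row $X^{i}Y^{j}$ gives $\langle X^{n+f}Y^{g},X^{i}Y^{j}\rangle=\sum a_{rs}\langle X^{r+f}Y^{s+g},X^{i}Y^{j}\rangle$, and transposing each pairing yields the claim. If $i+j=d+1$ with $n+f+g\le d-1$, then the left side of \eqref{Xrows} equals $\beta_{i+n+f,\,d+1-i+g}$ and the right side equals $\sum a_{rs}\beta_{i+r+f,\,d+1-i+s+g}$ (all the relevant entries lie in the old-moment blocks $B[0,d+1],\ldots,B[d-1,d+1]$), so it suffices to check $\beta_{i+n+f,\,d+1-i+g}=\sum a_{rs}\beta_{i+r+f,\,d+1-i+s+g}$; for $i\ge1$ this is the relation $X^{n+f+1}Y^{g}=\sum a_{rs}X^{r+f+1}Y^{s+g}$ (again an instance of \eqref{Xrec}, since $n+f+g\le d-1$) evaluated in the degree-$d$ row $X^{i-1}Y^{d+1-i}$, and for $i=0$ one evaluates $X^{n+f}Y^{g+1}=\sum a_{rs}X^{r+f}Y^{s+g+1}$ in the row $Y^{d}$.

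The remaining case, $i+j=d+1$ with $n+f+g=d$, is the main one: here $\langle X^{i}Y^{j},X^{n+f}Y^{g}\rangle$ is an entry of the new block $B[d,d+1]$, so it cannot be read off from $M_{d}$. I would first invoke Lemmas \ref{old=new}--\ref{righthankel}, which together show that $B(d+1)$ is a genuine moment matrix block; in particular the entry $\langle X^{i}Y^{d+1-i},X^{h}Y^{d-h}\rangle$ of $B[d,d+1]$ depends only on $i+h$. Put $\alpha:=i+n+f$; since $\alpha\ge n$, the choice $i':=\min(\alpha,d+1)$, $h:=\alpha-i'$ produces a column $X^{i'}Y^{d+1-i'}$ lying in the left recursive band (because $n\le i'\le d+1$) together with a valid degree-$d$ row $X^{h}Y^{d-h}$ such that $i'+h=\alpha$; by the Hankel property, $\langle X^{i}Y^{d+1-i},X^{n+f}Y^{g}\rangle=\langle X^{i'}Y^{d+1-i'},X^{h}Y^{d-h}\rangle$. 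Now expand the left-band column by its defining relation \eqref{Xton_rec}, $X^{i'}Y^{d+1-i'}=\sum a_{rs}X^{r+f'}Y^{s+d+1-n-f'}$ with $f':=i'-n$; each summand has degree $\le d$, so the symmetry of $M_{d}$ turns the pairing into $\sum a_{rs}\beta_{h+r+f',\,(d-h)+s+d+1-n-f'}$. Using $h+f'=\alpha-n=i+f$ and $(d-h)+(d+1-n-f')=2d+1-\alpha=(d+1-i)+g$, this becomes $\sum a_{rs}\beta_{i+f+r,\,(d+1-i)+g+s}=\sum a_{rs}\langle X^{i}Y^{d+1-i},X^{r+f}Y^{s+g}\rangle$, where the last equality holds because each $X^{r+f}Y^{s+g}$ has degree $\le d-1$ and so the pairing again lies in an old-moment block. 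This is \eqref{Xrows}.

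I expect the third case to be the only genuinely delicate step, for two reasons: it is exactly where one must use that $B(d+1)$ has already been shown (in Lemmas \ref{old=new}--\ref{righthankel}) to be a well-defined moment matrix block, so that transporting the entry along a cross-diagonal to the left band is legitimate; and the bookkeeping with exponents --- verifying that $i':=\min(\alpha,d+1)$ indeed lands in the left band and that $h+f'=i+f$, $2d+1-\alpha=(d+1-i)+g$ --- has to be carried out carefully. The first two cases are routine applications of the symmetry of $M_{d}$ and the recursion \eqref{Xrec}, and the three cases are exhaustive since the hypothesis of the lemma gives $n+f+g\le d$.
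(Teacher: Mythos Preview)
Your proof is correct and follows the same three-case decomposition as the paper (degree $\le d$; degree $d{+}1$ with $n{+}f{+}g<d$; degree $d{+}1$ with $n{+}f{+}g=d$), with the first two cases handled identically. In the third case your argument is a clean consolidation of the paper's: you invoke the full Hankel property of $B[d,d+1]$ (Lemmas~\ref{hankel}--\ref{righthankel}) to transport \emph{every} entry $\langle X^{i}Y^{d+1-i},X^{n+f}Y^{g}\rangle$ along its cross-diagonal to a left-band column $X^{i'}Y^{d+1-i'}$ with $i'\ge n$, then expand once via \eqref{Xton_rec}. The paper instead splits this case further: for $i\ge n$ it expands the column directly; for $d+1-m<i<n$ it Hankel-shifts specifically to column $X^{n}Y^{d+1-n}$; and for the right-band columns it does not use the Hankel property at all, but rather observes that \eqref{Xrows} is linear in the column and that the right-band columns are, by construction \eqref{Y^m_rec}, linear combinations of columns already treated. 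Your single-shift argument is shorter and avoids the three sub-subcases, at the cost of relying on Lemma~\ref{righthankel} (which the paper's right-band argument does not need); both routes are valid and use the same ingredients.
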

\begin{proof}
Since $M_{d}$ is real symmetric, it follows from
(\ref{Xton_rec}) that (\ref{Xrows}) holds for
$i+j\le d$. \ We may thus assume that $j= d+1-i$.
Consider first the case when $n+f+g<d$. \ In the subcase when $i \le d$, it follows from the presence of old moments in 
$B[n+f+g,d+1]$ that
\begin{eqnarray*} 
\langle X^{i}Y^{d+1-i},X^{n+f}Y^{g}\rangle &=&
\beta_{i+n+f,d+1-i+g},
\end{eqnarray*}
and in $M_{d}$ we have
\begin{eqnarray} \label{new}
\beta_{i+n+f,d+1-i+g} &=& \langle X^{n+f}Y^{g+1},X^{i}Y^{d-i}\rangle \nonumber \\
&=& \sum \limits_{r,s\ge 0, \ r+s\le n-1}^{} a_{rs} \langle X^{r+f}Y^{s+g+1},X^{i}Y^{d-i}\rangle \\
&=& \sum a_{rs} \langle X^{i}Y^{d-i}, X^{r+f}Y^{s+g+1}\rangle \quad \text{ \ (by symmetry in $M_{d}$)} \nonumber \\
&=& \sum a_{rs} \beta_{i+r+f,d-i+s+g+1} \nonumber \\
&=& \sum a_{rs} \langle X^{i}Y^{d+1-i}, X^{r+f}Y^{s+g}\rangle \nonumber \\
& & \text{ \ (by moment matrix structure in $B(d+1)$)}. \nonumber
\end{eqnarray}
For the subcase when $i=d+1$, we first note that $\langle X^{d+1}, X^{n+f}Y^{g}\rangle = \beta_{d+1+n+f,g} = \langle X^{d}, X^{n+f}Y^{g}\rangle 
= \langle X^{n+f}Y^{g}, X^{d}\rangle $, and we then proceed beginning as in (\ref{new}). \ 

We next consider the case $n+f+g=d$, 
and we seek to show that
\begin{equation}\label{XrowsHi}
\langle X^{i}Y^{d+1-i}, X^{n+f}Y^{g}\rangle=
\sum a_{rs} \langle X^{i}Y^{d+1-i}, X^{r+f}Y^{s+g}\rangle.
\end{equation}
We begin by showing that (\ref{XrowsHi}) holds if the
column $X^{i}Y^{d+1-i}$ is recursively determined
from (\ref{Xton_rec}), i.e., $i \ge n$. \ In this case,
we have $0\le i\le d+1-n$, so
\begin{eqnarray*}
\langle X^{i}Y^{d+1-i}, X^{n+f}Y^{g}\rangle &=& \sum a_{rs} \langle X^{r}Y^{s}X^{i-n}Y^{d+1-i},X^{n+f}Y^{g}\rangle \\
&=& \sum a_{rs} \langle  X^{n+f}Y^{g}, X^{r}Y^{s}X^{i-n}Y^{d+1-i}  \rangle \\
&=& \sum  a_{uv} \sum a_{rs} \langle  X^{u}Y^{v}X^{f}Y^{g}, X^{r}Y^{s}X^{i-n}Y^{d+1-i}  \rangle \\
&=& \sum  a_{uv} \sum a_{rs} \langle X^{r}Y^{s}X^{i-n}Y^{d+1-i}, X^{u+f}Y^{v+g}  \rangle \\
&=& \sum a_{uv}  \langle X^{i}Y^{d+1-i}, X^{u+f}Y^{v+g}\rangle.
\end{eqnarray*}
Thus $$ \langle X^{i}Y^{d+1-i}, X^{n+f}Y^{g}\rangle=
 \sum a_{uv}  \langle X^{i}Y^{d+1-i}, X^{u+f}Y^{v+g}\rangle,$$
which  is equivalent to (\ref{XrowsHi}). 

Returning to the proof of (\ref{XrowsHi}), we next assume
that column   $X^{i}Y^{d+1-i}$ is not recursively determined,
i.e., $d+1-m<i<n$. \ By the Hankel condition in $B(d+1)$, we have
\begin{eqnarray*}
\langle X^{i}Y^{d+1-i}, X^{n+f}Y^{g}\rangle &=& \langle X^{n}Y^{d+1-n}, X^{i+f}Y^{n-i+g}\rangle \\
&=&  \sum a_{rs} \langle X^{r}Y^{s+d+1-n},X^{i+f}Y^{n-i+g}\rangle \\
&=& \sum a_{rs} \beta_{r+i+f,s+d+1-i+g} \quad \text{(in $M_{d}$)} \\
&=& \sum a_{rs} \langle X^{i}Y^{d+1-i},X^{r+f}Y^{s+g}\rangle \\
&& \quad \text{(since $r+f+s+g<n+f+g=d$)}.
\end{eqnarray*}

Note that if (\ref{Xrows}) holds
for a collection of columns, then it holds for linear combinations
of those columns. \ Thus, using the preceding cases and (\ref{Y^m_rec}), we see that (\ref{Xrows})
holds, successively, for $X^{d+1-m}Y^{m},\ldots,Y^{d+1}$,
which completes the proof.
\end{proof}

The following result shows 
that the rows of
$\bpm
M_{d} &  B(d+1)  \epm$
 of the form $X^{f}Y^{m+g}$   $(f,g\ge 0, ~m+f+g\le d)$
are recursively determined from row $Y^{m}$.

\begin{lemma}\label{recYrows}   For $i,j\ge 0, i+j\le d+1$ and
for $f,g\ge 0,~ m+f+g\le d$,
\begin{equation}\label{Yrows}
     \langle X^{i}Y^{j},X^{f}Y^{m+g}\rangle =
\displaystyle \sum \limits_{u,v\ge 0, u+v\le m,v<m}
   b_{uv}\langle X^{i}Y^{j},  X^{u+f}Y^{v+g}\rangle.
\end{equation}
\end{lemma}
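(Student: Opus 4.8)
The plan is to imitate the proof of Lemma~\ref{recXrows}, with the roles of the relation $X^{n}=p(X,Y)$ in (\ref{X}) and (\ref{Xton_rec}) and of the relation $Y^{m}=q(X,Y)$ in (\ref{Y}) and (\ref{Y^m_rec}) interchanged, and with the rows $X^{f}Y^{m+g}$ (recursively determined from the row $Y^{m}$) now playing the part of the rows $X^{n+f}Y^{g}$ there. First I would record the \emph{base fact}: because $M_{d}$ is symmetric and, by hypothesis (\ref{Yrec}), $X^{f}Y^{m+g}=(x^{f}y^{g}q)(X,Y)$ holds in $Col~M_{d}$ whenever $m+f+g\le d$, the identity (\ref{Yrows}) is immediate for every column $X^{i}Y^{j}$ with $i+j\le d$. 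So only the columns of degree $d+1$, i.e.\ $j=d+1-i$, remain, and these I would split into the two cases $m+f+g<d$ and $m+f+g=d$, exactly as in Lemma~\ref{recXrows}.

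In the case $m+f+g<d$ the row $X^{f}Y^{m+g}$ has degree $\le d-1$, so for $i\le d$ the entry $\langle X^{i}Y^{d+1-i},X^{f}Y^{m+g}\rangle$ lies in a block $B[m+f+g,d+1]$ of degree $\le d-1$ and equals the old moment $\beta_{i+f,\,d+1-i+m+g}$; sliding it one step along its cross-diagonal rewrites it as $\langle X^{i}Y^{d-i},X^{f}Y^{m+g+1}\rangle$, whose two indices now both have degree $\le d$. Applying the base fact to reduce the row $X^{f}Y^{m+g+1}$, then sliding back and reading off the moment-matrix structure of $B(d+1)$, identifies the value with $\sum b_{uv}\langle X^{i}Y^{d+1-i},X^{u+f}Y^{v+g}\rangle$, as wanted. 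The subcase $i=d+1$ is reduced to the previous one by first lowering the column $X^{d+1}$ to $X^{d}$ (at the cost of raising the $X$-exponent of the row by one, which keeps it within degree $d$).

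The case $m+f+g=d$ is the heart of the matter. Now the row $X^{f}Y^{m+g}$ has degree $d$, so for a degree-$(d+1)$ column $X^{i}Y^{d+1-i}$ the entry sits in $B[d,d+1]$, and I would treat separately the three positions of this column, using throughout that $B[d,d+1]$ is Hankel (Lemmas~\ref{Hankellemma} and \ref{righthankel}), i.e.\ that sliding an entry along a cross-diagonal of $B[d,d+1]$ leaves its value unchanged. If $i\ge n$ the column lies in the left band and $X^{i}Y^{d+1-i}=(x^{i-n}y^{d+1-i}p)(X,Y)$ is a linear combination of columns of $M_{d}$; reducing the column this way, applying the base fact to each resulting degree-$\le d$ entry, and reassembling yields (\ref{Yrows}). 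It remains to handle the columns in the right band, which by construction are governed by (\ref{Y^m_rec}), and the central columns, which lie strictly between the two bands.

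The hard part — and the point where this proof must go beyond what was needed in Lemma~\ref{recXrows} — is precisely the central (and right) band. There the reduced rows $X^{n+f}Y^{g}$ have $X$-degree $\ge n$, so the cross-diagonal of $B[d,d+1]$ through any entry $\langle X^{i}Y^{d+1-i},X^{n+f}Y^{g}\rangle$ necessarily meets the left-band column $X^{n}Y^{d+1-n}$, and the left-band subcase finishes everything by sliding to it. Here the rows $X^{f}Y^{m+g}$ may have small $X$-degree $f$, and when $i+f<n$ — which, in view of (\ref{n+m}), happens for some central $i$ whenever $M_{d}$ is not flat — the cross-diagonal of $B[d,d+1]$ through $\langle X^{i}Y^{d+1-i},X^{f}Y^{m+g}\rangle$ does not reach the left band; it reaches only the column $X^{d+1-m}Y^{m}$. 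I would therefore organize the central and right bands as a downward induction on the cross-diagonal level $\ell$ of the entry in $B[d,d+1]$. On the top cross-diagonals the column is forced into the left band and is already handled. On a general level $\ell$ I would first dispatch all left-band columns as above, then treat $X^{d+1-m}Y^{m}$ by expanding it via (\ref{Y^m_rec}): this expresses the entry in terms of entries of columns of $M_{d}$ (base fact), of left-band columns (already handled on level $\ell$), and of central columns on strictly higher cross-diagonals (inductive hypothesis), so linearity, together with the fact that the column relation (\ref{Y^m_rec}) holds against every row of $(M_{d}\;B(d+1))$, gives (\ref{Yrows}) for $X^{d+1-m}Y^{m}$; the remaining central columns are then settled by sliding along the cross-diagonal to $X^{n}Y^{d+1-n}$ (when reachable) or to $X^{d+1-m}Y^{m}$ (otherwise), and the lower right-band columns $X^{d-m}Y^{m+1},\dots,Y^{d+1}$ by a further use of (\ref{Y^m_rec}), whose degree-$(d+1)$ terms all sit on cross-diagonals already controlled. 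Finally, as in Lemma~\ref{recXrows}, (\ref{Yrows}) passes to linear combinations of columns, which closes the argument. I expect the only real difficulty to be the combinatorial bookkeeping inside this induction — tracking exactly which columns on which cross-diagonals are already under control — and it is of the same nature as the bookkeeping carried out in the proof of Lemma~\ref{Hankellemma}.
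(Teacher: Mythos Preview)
Your plan and case structure match the paper's proof closely: the base case $i+j\le d$, the case $m+f+g<d$, and the left-band subcase ($i\ge n$) of $m+f+g=d$ are handled exactly as you describe. For the central band, however, the paper avoids your induction on cross-diagonal levels entirely. It observes that for a central column $X^{d+1-i}Y^{i}$ (with $d+1-n<i<m$, in the paper's reindexing by $Y$-exponent), the entry $\langle X^{d+1-i}Y^{i},X^{f}Y^{m+g}\rangle$ \emph{always} lies on a cross-diagonal of $B[d,d+1]$ that reaches $X^{d+1-m}Y^{m}$, so by the Hankel property it equals $\langle X^{d+1-m}Y^{m},X^{f+m-i}Y^{g+i}\rangle$, which by the very definition of $X^{d+1-m}Y^{m}$ via (\ref{Y^m_rec}) equals $\sum b_{uv}\langle X^{u+d+1-m}Y^{v},X^{f+m-i}Y^{g+i}\rangle$. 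Each summand is then identified directly with the corresponding term $\langle X^{d+1-i}Y^{i},X^{u+f}Y^{v+g}\rangle$ on the right of (\ref{Yrows}): for $u+v<m$ both are the same old moment of $M_{d}$ (since $u+f+v+g\le d-1$), and for $u+v=m$ both lie in $B[d,d+1]$ on the same cross-diagonal, so the Hankel property (Lemmas \ref{hankel}--\ref{righthankel}) gives equality. This establishes (\ref{Yrows}) for all central columns in one stroke; the right band then follows by linearity from (\ref{Y^m_rec}) and the cases already in hand. Your inductive scheme should also succeed, but the paper's direct argument, exploiting the already-established Hankel structure of $B[d,d+1]$ more fully, is simpler and spares you the cross-diagonal bookkeeping you anticipate.
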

\begin{proof}
Since $M_{d}$ is real symmetric and recursively generated, its
rows are also recursively generated from (\ref{X}) and (\ref{Y}), so
(\ref{Yrows}) holds if $i+j\le d$. \ We may now assume $j=d+1-i$, and we
first consider the case $m+f+g<d$ and the subcase $i \le d$. \ Since $f+g+m<d$, using old moments we see that 
\begin{eqnarray*}
\langle X^{i}Y^{d+1-i},X^{f}Y^{m+g}\rangle &=& \beta_{i+f,d+1-i+g+m} \\
&=& \langle X^{f}Y^{m+g+1}, X^{i}Y^{d-i} \rangle \quad \text{(in $M_{d}$)} \\
&=& \sum b_{uv}  \langle X^{u+f}Y^{v+g+1}, X^{i}Y^{d-i} \rangle \\
&=& \beta_{i+u+f,d-i+v+g+1} \text{(in $M_{d}$)} \\
&=& \sum b_{uv} \langle X^{i}Y^{d+1-i}, X^{u+f}Y^{v+g} \rangle \\
&& \quad \text{(since $u+v+f+g<d$)}.
\end{eqnarray*}
The subcase when $i=d+1$ proceeds as above, but starting with $\langle X^{d+1},X^{f}Y^{m+g} \rangle = \beta_{d+1+f,m+g} =
\langle X^{d},X^{f+1}Y^{m+g} \rangle = \langle X^{f+1}Y^{m+g},X^{d} \rangle$. \ For the case $m+f+g=d$, 
we first consider the subcase when $i\ge n$,
so  $X^{i}Y^{d+1-i}$ is in the left recursive band.
We have 
\begin{eqnarray*}
\langle X^{i}Y^{d+1-i},X^{f}Y^{m+g}\rangle &=& \langle X^{n}X^{i-n}Y^{d+1-i},X^{f}Y^{m+g}\rangle \\
&=& \sum a_{rs}  \langle X^{r+i-n}Y^{s+d+1-i},X^{f}Y^{m+g}\rangle \\
&=& \sum a_{rs}  \sum b_{uv} \langle X^{r+i-n}Y^{s+d+1-i},X^{u+f}Y^{v+g}\rangle \\
&& \quad \text{(by row recursiveness in $M_{d}$)} \\
&=& \sum b_{uv} \langle X^{i}Y^{d+1-i},X^{n+f}Y^{v+g}\rangle.
\end{eqnarray*}

In the next subcase, we consider a column in the center band, of the form
$X^{d+1-i}Y^{i}$ with $d+1-n<i<m$. \ In this case,
(\ref{Yrows}) is equivalent to
\begin{equation}\label{newYrows}
     \langle X^{d+1-i}Y^{i},X^{f}Y^{m+g}\rangle =
\displaystyle \sum \limits_{u,v\ge 0, u+v\le m,v<m}
   b_{uv}\langle X^{d+1-i}Y^{i},  X^{u+f}Y^{v+g}\rangle.
\end{equation}
Note that the component $\langle X^{d+1-i}Y^{i},X^{f}Y^{m+g}\rangle$
lies on a cross-diagonal that reaches column $X^{d+1-m}Y^{m}$, so since $B(d+1)$ is well-defined, we have
\begin{eqnarray}\label{ymred}
\langle X^{d+1-i}Y^{i},X^{f}Y^{m+g}\rangle &=& \langle X^{d+1-m}Y^{m},X^{f+m-i}Y^{g+i}\rangle \nonumber \\
&=& \sum \limits_{u,v\ge 0, u+v\le m,v<m} b_{uv}\langle X^{u+d+1-m}Y^{v},  X^{f+m-i}Y^{g+i}\rangle.
\end{eqnarray}

 For the subcase when $u+v<m$, in $M_{d}$ we have
\begin{eqnarray*}
\langle X^{u+d+1-m}Y^{v}, X^{f+m-i}Y^{g+i}\rangle &=& \beta_{u+d+1+f-i,v+g+i} \\
&=& \langle X^{d+1-i}Y^{i},X^{u+f}Y^{v+g}\rangle \quad \text{(since $u+f+v+g\le d-1$)}.
\end{eqnarray*}

For the subcase when $u+v=m$, there are three further subcases
in showing that
\begin{equation}\label{mrowsred}
 \langle X^{d+1-i}Y^{i},X^{u+f}Y^{v+g}\rangle
= \langle X^{u+d+1-m}Y^{v},  X^{f+m-i}Y^{g+i}\rangle.
\end{equation}
For $v=i$, (\ref{mrowsred}) is clear.
For $v<i$, the Hankel property in $B[d,d+1]$ implies
\begin{eqnarray*}
\langle X^{d+1+u-m}Y^{v},X^{m+f-i}Y^{g+1} \rangle &=& \langle X^{d+1+u-m-(i-v)}Y^{v+(i-v)},X^{m+f-i+(i-v)}Y^{g+i-(i-v)} \rangle \\ 
&=& \langle X^{d+1-i}Y^{i},X^{u+f}Y^{g+v} \rangle .
\end{eqnarray*}
For $v>i$ we have, similarly,
\begin{eqnarray*}
\langle X^{d+1-i}Y^{i},X^{u+f}Y^{v+g}\rangle &=& \langle X^{d+1-i-(v-i)}Y^{i+v-i},X^{u+f+v-i}Y^{v+g-(v-i)}\rangle \\
&=& \langle X^{d+1+u-m}Y^{v},X^{m+f-i}Y^{g+i}\rangle.
\end{eqnarray*}
Since (\ref{Yrows}) holds in $M_{d}$ and in all columns of the left
and center bands, it now follows, using (\ref{Y^m_rec}) successively, that it holds
for columns in the right recursive band, which completes the proof.
\end{proof}

We are now prepared to prove that $Ran~B(d+1)\subseteq Ran~M_{d}$.
It follows immediately from (\ref{Xton_rec}) that each column in the left recursive band
of $B(d+1)$ belongs to $Ran~M_{d}$. \ In view of (\ref{Y^m_rec}), to  establish range inclusion, it suffices to show that each central-band column of $B(d+1)$ belongs to $Ran~M_{d}$. \ 
Let $\mathcal{S}$ denote the set of recursively determined
columns of $M_{d}$, i.e.,
$$
\mathcal{S} = \{X^{n},~X^{n+1},~X^{n}Y,\ldots, X^{d},\ldots,
X^{n}Y^{d-n},\ldots,Y^{m},~ XY^{m},~Y^{m+1},\dots,X^{d-m}Y^{m},
\ldots, Y^{d}\}.
$$ 
Let $\mathcal{B}$ denote the basis for
$Col~M_{d}$ (the column space of $M_{d}$) consisting of those
columns of $M_{d}$ which do not belong to $\mathcal{S}$.
Let $M_{\mathcal{B}}$ denote the compression of $M_{d}$ to the
rows and columns indexed by $\mathcal{B}$. \ Since $M_{d} \succeq 0$, we also have $M_{\mathcal{B}} \succeq 0$.
 Let $X^{i}Y^{d+1-i}$ ($d+1-m<i<n$)
denote a central-band column of $B(d+1)$, and let
$v_{i} \equiv [X^{i}Y^{d+1-i}]_{\mathcal{B}}$ denote the
compression of $X^{i}Y^{d+1-i}$ to the rows of $\mathcal{B}$.
There exists a unique vector of coefficients
$(c_{ab}^{(i)})_{X^{a}Y^{b}\in \mathcal{B}}$ such that
$$v_{i} = \displaystyle \sum \limits_{X^{a}Y^{b}\in \mathcal{B}}^{}
        c_{ab}^{(i)}[X^{a}Y^{b}]_{\mathcal{B}},$$
i.e., for each $X^{u}Y^{v}\in \mathcal{B}$,
\begin{equation}\label{range_formula}
    \langle X^{i}Y^{d+1-i}, X^{u}Y^{v}\rangle
=   \displaystyle \sum \limits_{X^{a}Y^{b}\in \mathcal{B}}^{} c_{ab}^{(i)}
\langle X^{a}Y^{b},X^{u}Y^{v}\rangle.
\end{equation}
To complete the proof that $Ran~B(d+1)\subseteq Ran~M_{d}$, it suffices
to prove that    $X^{i}Y^{d+1-i}
 = \displaystyle \sum \limits_{X^{a}Y^{b}\in \mathcal{B}}^{}
        c_{ab}^{(i)}X^{a}Y^{b},$
which, in view of (\ref{range_formula}), follows from the next result.
\begin{lemma}\label{range}
For each $X^{c}Y^{e}\in \mathcal{S}$,
\begin{equation}\label{range_equation}
    \langle X^{i}Y^{d+1-i}, X^{c}Y^{e}\rangle
=   \displaystyle \sum \limits_{X^{a}Y^{b}\in \mathcal{B}}^{} c_{ab}^{(i)}
\langle X^{a}Y^{b},X^{c}Y^{e}\rangle.
\end{equation}
\end{lemma}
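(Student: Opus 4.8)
The plan is to prove (\ref{range_equation}) by strong induction on the position of $X^{c}Y^{e}$ in the degree-lexicographic order, exploiting that $X^{c}Y^{e}\in\mathcal{S}$ means the \emph{row} $X^{c}Y^{e}$ is recursively determined: either $c\ge n$ (the ``$X$-band'' case, governed by $X^{n}=p(X,Y)$) or $e\ge m$ (the ``$Y$-band'' case, governed by $Y^{m}=q(X,Y)$), and at least one holds. The engine of the argument is Lemma \ref{recXrows} (resp. Lemma \ref{recYrows}), which says that the inner products $\langle X^{i}Y^{d+1-i},X^{c}Y^{e}\rangle$ with these recursively determined rows reduce to inner products with rows of strictly smaller degree-lexicographic rank, for which the desired identity is available either by definition of the $c_{ab}^{(i)}$ (if the reduced row lies in $\mathcal{B}$) or by induction (if it lies in $\mathcal{S}$).

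Concretely, for the $X$-band case I would write $X^{c}Y^{e}=X^{n+f}Y^{g}$ with $f=c-n$, $g=e$, so $n+f+g=c+e\le d$; since $X^{i}Y^{d+1-i}$ has degree $d+1$, Lemma \ref{recXrows} gives
\[
\langle X^{i}Y^{d+1-i},X^{c}Y^{e}\rangle=\sum_{r,s\ge 0,\,r+s\le n-1}a_{rs}\,\langle X^{i}Y^{d+1-i},X^{r+f}Y^{s+g}\rangle .
\]
Each $X^{r+f}Y^{s+g}$ has degree $r+f+s+g\le n-1+f+g<c+e$, hence is a genuine column of $M_{d}$ strictly earlier than $X^{c}Y^{e}$ in degree-lexicographic order. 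If $X^{r+f}Y^{s+g}\in\mathcal{B}$ we invoke (\ref{range_formula}); if $X^{r+f}Y^{s+g}\in\mathcal{S}$ we invoke the induction hypothesis. Either way $\langle X^{i}Y^{d+1-i},X^{r+f}Y^{s+g}\rangle=\sum_{X^{a}Y^{b}\in\mathcal{B}}c_{ab}^{(i)}\langle X^{a}Y^{b},X^{r+f}Y^{s+g}\rangle$. Substituting and interchanging the two finite sums,
\[
\langle X^{i}Y^{d+1-i},X^{c}Y^{e}\rangle=\sum_{X^{a}Y^{b}\in\mathcal{B}}c_{ab}^{(i)}\Bigl(\sum_{r,s}a_{rs}\,\langle X^{a}Y^{b},X^{r+f}Y^{s+g}\rangle\Bigr),
\]
and the inner sum equals $\langle X^{a}Y^{b},X^{n+f}Y^{g}\rangle=\langle X^{a}Y^{b},X^{c}Y^{e}\rangle$ because $X^{n+f}Y^{g}=\sum_{r,s}a_{rs}X^{r+f}Y^{s+g}$ holds in $Col~M_{d}$ by (\ref{Xrec}) (every row/column appearing has degree at most $d$). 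This is precisely (\ref{range_equation}). The $Y$-band case ($e\ge m$) is identical, writing $X^{c}Y^{e}=X^{f}Y^{m+g}$ with $f=c$, $g=e-m$, $m+f+g=c+e\le d$, using Lemma \ref{recYrows} in place of Lemma \ref{recXrows} and (\ref{Yrec}) in place of (\ref{Xrec}), together with the symmetry of $M_{d}$ to collapse the inner sum.

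The one point deserving care occurs in the $Y$-band case when $\deg q=m$: then (\ref{Yrec}) expresses $X^{f}Y^{m+g}$ partly through columns $X^{u+f}Y^{v+g}$ of the \emph{same} total degree. However, for such terms $u+v=m$ with $v<m$ forces $u\ge 1$, so $X^{u+f}Y^{v+g}$ has strictly larger $x$-exponent than $X^{f}Y^{m+g}$ and therefore precedes it in degree-lexicographic order; these are genuine columns of $M_{d}$, so the induction hypothesis remains available and the induction is not circular. No separate base case is needed: the degree-lexicographically minimal elements of $\mathcal{S}$ are reduced by (\ref{Xrec})/(\ref{Yrec}) to columns lying entirely in $\mathcal{B}$, where (\ref{range_formula}) applies directly.

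I expect the only real obstacle to be the bookkeeping: checking in each invocation of Lemmas \ref{recXrows}--\ref{recYrows} that the reduced columns $X^{r+f}Y^{s+g}$ (resp. $X^{u+f}Y^{v+g}$) both have degree at most $d$ (so the relevant entries live inside $M_{d}$ and (\ref{Xrec})/(\ref{Yrec}) may be applied there) and are strictly earlier than $X^{c}Y^{e}$ in degree-lexicographic order. Once this dichotomy ``in $\mathcal{B}\Rightarrow$ use (\ref{range_formula}); in $\mathcal{S}\Rightarrow$ use the inductive hypothesis'' is in place, the remainder is an interchange of two finite sums plus one application of recursiveness inside $M_{d}$. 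Granting Lemma \ref{range}, each central-band column of $B(d+1)$ equals $\sum_{X^{a}Y^{b}\in\mathcal{B}}c_{ab}^{(i)}X^{a}Y^{b}$ in the full column space, so $Ran~B(d+1)\subseteq Ran~M_{d}$, which together with Step (i) completes the proof of Theorem \ref{main}.
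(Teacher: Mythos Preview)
Your proposal is correct and follows essentially the same approach as the paper's proof: strong induction on the degree-lexicographic position of $X^{c}Y^{e}\in\mathcal{S}$, reducing via Lemma \ref{recXrows} or \ref{recYrows} to strictly earlier rows, then interchanging sums and collapsing via recursiveness in $M_{d}$. The paper assumes $n\le m$, treats the base case $X^{n}$ explicitly, and details only the $e\ge m$ case (calling the other ``simpler''), whereas you spell out both bands and are more explicit than the paper about why the same-degree terms arising when $\deg q=m$ still precede $X^{c}Y^{e}$ in degree-lexicographic order---but the argument is the same.
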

\begin{proof}
We may assume without loss of generality that $n\le m$, so the
elements of $\mathcal{S}$ may be arranged in degree-lexicographic
order as $X^{n}, \cdots, Y^{m}, \cdots, X^{d}, \cdots, Y^{d}$. \
We will prove (\ref{range_equation}) by induction on the position number
of  row $X^{c}Y^{e}\in \mathcal{S}$ within the degree-lexicographic ordering.
For row $X^{n}$ ($c=n$, $e=0$), Lemma \ref{recXrows} implies that
\begin{equation}\label{basecase}
 \langle X^{i}Y^{d+1-i}, X^{n}\rangle=
\displaystyle
 \sum \limits_{r,s\ge 0, r+s\le n-1} 
a_{rs} \langle X^{i}Y^{d+1-i}, X^{r}Y^{s}\rangle.
\end{equation}
Since $r+s<n$, $X^{r}Y^{s}\in \mathcal{B}$, so the sum in
(\ref{basecase}) may be expressed as
$$     \sum a_{rs}
   \displaystyle \sum \limits_{X^{a}Y^{b}\in \mathcal{B}}^{} c_{ab}^{(i)}
\langle X^{a}Y^{b}, X^{r}Y^{s}\rangle = 
   \displaystyle \sum \limits_{X^{a}Y^{b}\in \mathcal{B}}^{} c_{ab}^{(i)}
\langle X^{a}Y^{b}, X^{n}\rangle$$
(using Lemma \ref{recXrows} again). \ 
 Assume now that   (\ref{range_equation})
holds for all rows $X^{C}Y^{E}\in \mathcal{S}$ with 
order position up to $k-1$, and consider
$X^{c}Y^{e}\in \mathcal{S}$ with position $k$. \ Either $c\ge n$ or
$e\ge m$; we present the argument for the case $e\ge m$ (the other case
is simpler).
We have $e= m+g$ for some $g\ge 0$. \  
 From Lemma \ref{recYrows}, we have
$$ \langle X^{i}Y^{d+1-i}, X^{c}Y^{m+g}\rangle=
\displaystyle 
\sum \limits_{u,v\ge 0, u+v\le m, v<m}
b_{uv} \langle X^{i}Y^{d+1-i}, X^{c+u}Y^{g+v}\rangle. $$
Now $X^{c+u}Y^{g+v}$ is either a basis vector, or, since $v<m$,
it precedes $X^{c}Y^{m+g}$ in the ordering of $\mathcal{S}$.
Thus, by definition (for the basis rows)
and by induction (for the non-basis rows), the preceding sum is
equal to
$$=     \sum b_{uv}
   \displaystyle \sum \limits_{X^{a}Y^{b}\in \mathcal{B}}^{} c_{ab}^{(i)}
\langle X^{a}Y^{b}, X^{c+u}Y^{g+v}\rangle = 
   \displaystyle \sum \limits_{X^{a}Y^{b}\in \mathcal{B}}^{} c_{ab}^{(i)}
\langle X^{a}Y^{b},
 \sum b_{uv} X^{c+u}Y^{g+v}\rangle$$
 $$=
   \displaystyle \sum \limits_{X^{a}Y^{b}\in \mathcal{B}}^{} c_{ab}^{(i)}
\langle X^{a}Y^{b},
  X^{c}Y^{e}\rangle$$
(by another application of Lemma  \ref{recYrows}).
\end{proof}
The proof of Theorem \ref{main} is now complete. \   


\section{Proof of Theorem \ref{RDnew}} \label{PROOF2}

For the proof of Theorem \ref{RDnew}, we require a preliminary result concerning a general moment matrix.
\begin{lemma}\label{longcolumns}
Suppose $M_{d+1}$ satisfies $Ran~B(d+1)\subseteq
Ran~M_{d}$. \ If $p\in \mathcal{P}_{d}$ and
$p(X,Y) = 0$ in $Col~M_{d}$, then $p(X,Y) = 0$
in $Col~M_{d+1}$.
\end{lemma}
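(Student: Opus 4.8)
The plan is to reduce the statement to a one-line block computation. Write
$M_{d+1}=\bpm M_{d} & B(d+1) \\ B(d+1)^{T} & C(d+1)\epm$.
Given $p\in\mathcal{P}_{d}$, regard the coefficient vector $\hat{p}$ as indexed by all monomials of degree at most $d+1$, padding it with zeros in the new degree-$(d+1)$ positions. Then, reading off the block structure,
$$M_{d+1}\hat{p}=\bpm M_{d}\hat{p} \\ B(d+1)^{T}\hat{p}\epm .$$
The hypothesis that $p(X,Y)=0$ in $Col~M_{d}$ says precisely that $M_{d}\hat{p}=0$, so the top block of $M_{d+1}\hat{p}$ already vanishes, and it remains only to check that the bottom block $B(d+1)^{T}\hat{p}$ vanishes as well.

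For this I would invoke the range hypothesis in the form recorded in the Introduction: $Ran~B(d+1)\subseteq Ran~M_{d}$ is equivalent to the existence of a matrix $W$ with $B(d+1)=M_{d}W$. Since $M_{d}$ is (real) symmetric, this gives $B(d+1)^{T}=W^{T}M_{d}$, and therefore $B(d+1)^{T}\hat{p}=W^{T}(M_{d}\hat{p})=W^{T}\cdot 0=0$. Hence $M_{d+1}\hat{p}=0$, i.e. $p(X,Y)=0$ in $Col~M_{d+1}$, as desired.

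There is no real obstacle here: the result is a routine consequence of the block decomposition together with the range inclusion, and in particular no positivity of $M_{d+1}$ is needed — the matrix $W$ exists purely by linear algebra. The only point requiring a little care is the bookkeeping of index sets, namely that $\hat{p}$ (supported on degrees $\le d$) is being applied to $M_{d+1}$ after zero-padding, so that $M_{d+1}\hat{p}$ is exactly the column $p(X,Y)$ of $M_{d+1}$ and splits into the two blocks written above.
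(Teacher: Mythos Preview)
Your proof is correct and is essentially the same argument as the paper's, expressed in compact matrix form. The paper verifies $B(d+1)^{T}\hat{p}=0$ entry by entry, writing each degree-$(d+1)$ column $X^{u}Y^{d+1-u}$ of $B(d+1)$ as a linear combination $\sum_{a,b} c_{ab}^{(u)}X^{a}Y^{b}$ of columns of $M_{d}$ (these coefficient vectors are precisely the columns of your $W$) and then expanding; you carry out the identical computation in one stroke via $B(d+1)^{T}=W^{T}M_{d}$.
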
 
\begin{proof}
Since $M_{d}$ is real symmetric, we have $p(X,Y) = 0$
in the row space of $M_{d}$, and we first show that
$p(X,Y)=0$ holds in the row space of
$\bpm
M_{d} & B(d+1)   
\epm.$ 
Let $\rho := deg~p$ and
suppose $p(x,y) \equiv \sum_{r,s\ge 0, r+s\le \rho} a_{rs}x^{r}y^{s}$.
Then for $i,j\ge 0$ with $i+j\le d$, we have
\begin{equation}\label{pM}
\sum_{r,s} \alpha_{rs} \langle X^{i}Y^{j}, X^{r}Y^{s} \rangle = 0.
\end{equation}
Consider a column  of degree $d+1$, $X^{u}Y^{d+1-u}$ ($0\le u\le d+1$).
We seek to show that
\begin{equation}\label{pB}
\sum_{r,s} \alpha_{rs} \langle X^{u}Y^{d+1-u}, X^{r}Y^{s} \rangle = 0.
\end{equation}
By the range inclusion, we have a dependence relation in $Col~
 \bpm
M_{d} & B(d+1)   
\epm$ of the form
\begin{equation}\label{nextdegree}
X^{u}Y^{d+1-u} = \sum_{a,b\ge 0, a+b\le d} c_{ab}^{(u)} X^{a}Y^{b}.
\end{equation}
Thus,
\begin{eqnarray*}
\sum_{r,s} \alpha_{rs} \langle X^{u}Y^{d+1-u}, X^{r}Y^{s} \rangle &=&
\sum_{r,s} \alpha_{rs}  \sum_{a,b\ge 0, a+b\le d} c_{ab}^{(u)}
\langle X^{a}Y^{b}, X^{r}Y^{s} \rangle \\
&=& \sum  c_{ab}^{(u)}     \sum \alpha_{rs}   
\langle X^{a}Y^{b}, X^{r}Y^{s} \rangle = 0 \quad (\text{by (\ref{pM})}).
\end{eqnarray*}
Now, $p(X,Y)=0$ in 
the row space of
$
 \bpm
M_{d} & B(d+1)   
\epm$,
so  
$p(X,Y)=0$ in $Col~
\bpm
M_{d} \\
 B(d+1)^{T}   
\epm.$
\end{proof}

\begin{proof} [Proof of Theorem \ref{RDnew}]
\ It follows from the proof of Theorem \ref{gridthm} that $M_{d}$ admits
a unique extension $M_{d+1}$ which satisfies $Ran~B(d+1)\subseteq
Ran~M_{d}$ and such that (\ref{Xton_rec})-(\ref{Y^m_rec}) hold in $Col~M_{d+1}$.
It remains only to prove that $M_{d+1}$ is recursively
generated. \ 
Since $M_{d}$ is recursively generated, it suffices to consider
a dependence relation in $Col~M_{d+1}$ of degree $d$,
say
\begin{equation}\label{firstrelation}
X^{i}Y^{d-i} = \sum_{g,h\ge0,g+h\le d-1} c_{gh}X^{g}Y^{h}
\end{equation}
(where $0\le i\le d$),
and to show that
\begin{equation}\label{timesX}
X^{i+1}Y^{d-i} = \sum_{g,h\ge0,g+h\le d-1} c_{gh}X^{g+1}Y^{h}
\end{equation}
and
\begin{equation}\label{timesY}
X^{i}Y^{d-i+1} = \sum_{g,h\ge0,g+h\le d-1} c_{gh}X^{g}Y^{h+1}.
\end{equation}
Suppose first that $i\ge n$, so that $X^{i}Y^{d-i}$ lies in the
left band. \ Then from (\ref{Xrec}) we also have
\begin{equation}\label{secondrelation}
X^{i}Y^{d-i} = \sum_{r+s\le n-1} a_{rs} X^{i-n+r}Y^{s+d-i}.
\end{equation}
Thus, in $M_{d}$ we have the column relation of degree
at most $d-1$,
$$\sum_{g+h\le d-1} c_{gh}X^{g}Y^{h} 
= \sum_{r+s\le n-1} a_{rs} X^{i-n+r}Y^{s+d-i}.$$
Since $M_{d}$ is recursively generated, it follows that in
$Col~M_{d}$ we also have
$$\sum_{g+h\le d-1} c_{gh}X^{g+1}Y^{h} 
= \sum_{r+s\le n-1} a_{rs} X^{i-n+r+1}Y^{s+d-i}.$$
Lemma \ref{longcolumns} implies that the
last equation also holds  in $Col~M_{d+1}$, where, from (\ref{Y^m_rec}),
the right-hand sum represents $X^{i+1}Y^{d-i}$; this 
establishes (\ref{timesX}). \ We omit the proof of  (\ref{timesY}),
which is similar. \  The case when $d-i\ge m$, so that
 $X^{i}Y^{d-i+1}$ is in the right band, is handled in an entirely 
analogous fashion, so we also omit the proof of this case.

We next consider the case when $d-m<i<n$, so that column
$X^{i}Y^{d-i}$ in (\ref{firstrelation}) is in the central band.
To establish (\ref{timesX}), it suffices to verify that
\begin{equation}\label{Xcomponent}
\langle X^{i+1}Y^{d-i}, X^{k}Y^{j} \rangle 
= \sum_{g,h\ge0,g+h\le d-1} c_{gh} \langle X^{g+1}Y^{h},X^{k}Y^{j}\rangle \quad (k,j\ge 0,~k+j\le d+1).
\end{equation}
The case when $k+j<d$ is easy, using (\ref{firstrelation}) and
the old moments in  block $B[k+j,d+1]$. \ We consider next the
case $k+j=d$ and the subcase when $k\ge n$. \
In this subcase, $\langle X^{i+1}Y^{d-i}, X^{k}Y^{d-k} \rangle$
belongs to a cross-diagonal of $B[d,d+1]$ 
that intersects column $X^{n}Y^{d+1-n}$,
so
from the definition
of $B[d,d+1]$ in the proof of Theorem \ref{main}, we have
\begin{eqnarray} \label{recformula}
\langle X^{i+1}Y^{d-i}, X^{k}Y^{d-k} \rangle
&:=& \langle X^{n}Y^{d+1-n}, X^{k-(n-i-1)}Y^{d-k+n-i-1} \rangle \nonumber \\
&=& \sum a_{rs} \langle X^{r}Y^{s+d+1-n}, X^{k-(n-i-1)}Y^{d-k+n-i-1} \rangle.
\end{eqnarray}
Now, we have
\begin{eqnarray*}
\sum_{g,h\ge 0,g+h\le d-1} c_{gh} \langle X^{g+1}Y^{h},X^{k}Y^{d-k}\rangle &=& \sum c_{gh} \langle X^{k}Y^{d-k},X^{g+1}Y^{h}\rangle \\
&=& \sum c_{gh} \sum_{rs} a_{rs} \langle X^{r+k-n}Y^{s+d-k},X^{g+1}Y^{h}\rangle \\
&=& \sum a_{rs} \sum  c_{gh}  \langle X^{g+1}Y^{h},X^{r+k-n}Y^{s+d-k}\rangle \\
&=& \sum a_{rs} \sum  c_{gh}  \langle X^{g}Y^{h},X^{r+k-n+1}Y^{s+d-k}\rangle \quad (\text{in} \; M_{d}) \\
&=& \sum a_{rs}  \langle X^{i}Y^{d-i},X^{r+k-n}Y^{s+d-k}\rangle \\
&=& \sum a_{rs}  \langle   X^{r+k-n}Y^{s+d-k},  X^{i}Y^{d-i}\rangle \\
&=& \sum a_{rs}  \langle   X^{r}Y^{s+d-k+(k-n+1)},  X^{i+(k-n+1)}Y^{d-i-(k-n+1)}\rangle.
\end{eqnarray*}
This last expression agrees with (\ref{recformula}), so (\ref{timesX})
is established for this subcase. \ The proof of this subcase for
(\ref{timesY}) is very similar, so we omit the details. \ In the subcase
when $k< n$, then $d-k\ge m$, and we see that
$\langle X^{i+1}Y^{d-i}, X^{k}Y^{d-k} \rangle$
belongs to a cross-diagonal of $B[d,d+1]$ 
that intersects column $X^{d+1-m}Y^{m}$. \ Since $deg~q<m$,
the proof of this subcase is entirely analogous to that above, 
but using (\ref{Y^m_rec}) for the definition of $X^{d+1-m}Y^{m}$. 

Finally, we consider the case $k+j=d+1$. \ As above, we will treat the
subcase of (\ref{timesX}) when $k\ge n$ in detail and omit the proofs of
the other subcases of (\ref{timesX}) and (\ref{timesY}), which are similar.
Since $k\ge n$, then, as above, we have
\begin{eqnarray} \label{recformulaC}
\langle X^{i+1}Y^{d-i}, X^{k}Y^{d+1-k} \rangle &:=& \langle X^{n}Y^{d+1-n}, X^{k-(n-i-1)}Y^{d+1-k+n-i-1} \rangle \nonumber \\ 
&=& \sum a_{rs} \langle X^{r}Y^{s+d+1-n}, X^{k-(n-i-1)}Y^{d-k+n-i} \rangle.
\end{eqnarray}
Now, 

\begin{eqnarray*}
\sum_{g,h\ge 0,g+h\le d-1} c_{gh} \langle X^{g+1}Y^{h},X^{k}Y^{d+1-k}\rangle \\
&=& \sum c_{gh} \langle X^{k}Y^{d+1-k},X^{g+1}Y^{h}\rangle \\
&& (\text{since} \bpm M_{d} & B(d+1) \epm \text{is the transpose of} \\
&&  \bpm
M_{d} \\
 B(d+1)^{T}   
\epm) \\
&=& \sum  c_{gh} \sum_{rs} a_{rs}  \langle X^{r+k-n}Y^{s+d+1-k},X^{g+1}Y^{h}\rangle \\
&=& \sum a_{rs} \sum  c_{gh}  \langle X^{g+1}Y^{h},X^{r+k-n}Y^{s+d+1-k}\rangle.
\end{eqnarray*}
Since the row degrees of the terms in the last sum are at most $d$, by the
previous cases (for $j+k<d$ and $j+k=d$), the last double sum may be
expressed as 
$$ \sum a_{rs}  \langle X^{i+1}Y^{d-i},X^{r+k-n}Y^{s+d+1-k}\rangle$$
relative to $\bpm
M_{d} & B(d+1)   
\epm$. \ Since $M_{d+1}$ is real symmetric, the latter sum may be
expressed as
\begin{eqnarray*}
\sum a_{rs} \langle X^{r+k-n}Y^{s+d+1-k},
X^{i+1}Y^{d-i}\rangle \\
&=& \sum a_{rs}   \langle X^{r}Y^{s+d+1-k+(k-n)},
X^{i+1+(k-n)}Y^{d-i-(k-n)}\rangle \\
&=& \sum a_{rs}   \langle X^{r}Y^{s+d+1-n)},
X^{i+1+k-n}Y^{d-i-k+n}\rangle,
\end{eqnarray*}
and this agrees with (\ref{recformulaC}).
The proof is now complete.
\end{proof}


\bigskip

\noindent
Ra\' ul E. Curto

\noindent
Department of Mathematics

\noindent
The University of Iowa

\noindent
Iowa City, Iowa 52246, USA

\noindent Email: raul-curto@uiowa.edu

\medskip

\noindent
Lawrence Fialkow

\noindent
Departments of Computer Science and Mathematics

\noindent
State University of New York

\noindent
New Paltz, New York 12561

\noindent Email: fialkowl@newpaltz.edu

\end{document}